\crefname{hypothesis}{Hypothesis}{Hypotheses}
\title{A Local Discontinuous Galerkin approximation for the \lowercase{$p$}-Navier--Stokes system, Part II: Convergence rates for the velocity\thanks{Submitted to the editors \today.
}}
\author{Alex Kaltenbach\thanks{Department of Applied Mathematics, University of Freiburg, Ernst--Zermelo-Straße 1, D-79104 Freiburg, GERMANY. (\email{alex.kaltenbach@mathematik.uni-freiburg.de}).}
\and Michael R\r{U}\v{Z}I\v{C}KA\thanks{Department of Applied Mathematics, University of Freiburg, Ernst--Zermelo-Straße 1, D-79104 Freiburg, GERMANY.
  (\email{rose@mathematik.uni-freiburg.de}).}}
\definecolor{rltred}{rgb}{0.75,0,0}
\definecolor{rltgreen}{rgb}{0,0.5,0}
\definecolor{rltblue}{rgb}{0,0,0.75}
\providecommand{\meantmp}[2]{#1\langle{#2}#1\rangle}
\providecommand{\mean}[1]{\meantmp{}{#1}}
\providecommand{\jumptmp}[2]{#1\llbracket{#2}#1\rrbracket}
\providecommand{\jump}[1]{\jumptmp{}{#1}}
\providecommand{\avgtmp}[2]{#1\{{#2}#1\}}
\providecommand{\avg}[1]{\avgtmp{}{#1}}
\providecommand{\bigavg}[1]{\avgtmp{\big}{#1}}
\providecommand{\flux}[1]{{\widehat{#1}}}
\providecommand{\PiDG}{{\Uppi_{h}^{k}}}
\providecommand{\Pia}{{\Uppi_h^{0}}}
\providecommand{\Vo}{{\mathaccent23 V}}
\providecommand{\Qo}{{\mathaccent23 Q}}
\providecommand{\Xhk}{\smash{X_h^k}}
\providecommand{\Vhk}{\smash{V_h^k}}
\providecommand{\Qhkc}{\smash{Q_{h,c}^k}}
\providecommand{\Qhko}{{{\mathaccent23 Q}_h^k}}
\providecommand{\Qhkco}{{{\mathaccent23 Q}_{h,c}^k}}
\providecommand{\SSS}{\boldsymbol{\mathcal{S}}}
\newcommand{\Ghk}{\boldsymbol{\mathcal{G}}_h^k}
\newcommand{\Ghnk}{\boldsymbol{\mathcal{G}}_{h_n}^k\! }
\newcommand{\Dhk}{\boldsymbol{\mathcal{D}}_h^k}
\newcommand{\Divhk}{\mathcal{D}\dot{\iota}\nu_h^k}
\newcommand{\Rhk}{\boldsymbol{\mathcal{R}}_h^k}
\newcommand{\WDG}{W^{1,p}(\mathcal{T}_h)}
\providecommand{\divo}{\textrm{div}\,}
\providecommand{\sss}{\avg{\abs{\Pia \Dhk \bfv_h}}}
\providecommand{\sssl}{\avg{\abs{\Pia \Dhk \bfv_h}}}
\begin{document}

\maketitle

\begin{abstract}
  In the present paper, we prove convergence rates for the velocity of the Local
  Discontinuous Galerkin (LDG) approximation, proposed in Part I of the paper (cf.~\cite{kr-pnse-ldg-1}), of
  systems of $p$-Navier--Stokes~type and $p$-Stokes type with $p\in (2,\infty)$. The
  convergence rates are  optimal for linear ansatz functions. The results are supported by
  numerical experiments.
\end{abstract}

\begin{keywords}
    discontinuous Galerkin, $p$-Navier--Stokes system, error bounds, velocity
\end{keywords}

\begin{MSCcodes}
    76A05, 35Q35, 65N30, 65N12, 65N15   
\end{MSCcodes}

\section{Introduction}

In this paper, we continue our study of the Local Discontinuous Galerkin
(LDG) scheme, proposed in Part I of the paper (cf.~\cite{kr-pnse-ldg-1}), of
steady systems~of $p$-Navier\hspace{-0.2mm}--\hspace{-0.2mm}Stokes \hspace{-0.2mm}type. \hspace{-0.2mm}In \hspace{-0.2mm}this \hspace{-0.2mm}paper, \hspace{-0.2mm}we \hspace{-0.2mm}restrict
\hspace{-0.2mm}ourselves \hspace{-0.2mm}to \hspace{-0.2mm}the \hspace{-0.2mm}homogeneous~\hspace{-0.2mm}problem,~\hspace{-0.2mm}i.e., 
\begin{equation}
  \label{eq:p-navier-stokes}
  \begin{aligned}
    -\divo\SSS(\bfD\bfv)+[\nabla\bfv]\bfv+\nabla q&=\bfg  \qquad&&\text{in }\Omega\,,\\
    \divo\bfv&=0 \qquad&&\text{in }\Omega\,,
    \\
    \bfv &= \mathbf{0} &&\text{on } \partial\Omega\,.
  \end{aligned}
\end{equation}
This system describes the steady motion of a homogeneous,
incompressible~fluid~with shear-dependent viscosity. More precisely,
for a given vector field $\bfg\colon\Omega\to \setR^d$ describing external
body forces and a homogeneous Dirichlet
boundary~condition~\eqref{eq:p-navier-stokes}$_3$, we seek for a
velocity vector field~${\bfv=(v_1,\dots,v_d)^\top\colon \Omega\to
  \setR^d}$~and~a~scalar~kinematic~pressure ${q\colon \Omega\to \setR}$ solving~\eqref{eq:p-navier-stokes}.
Here, $\Omega\hspace{-0.1em}\subseteq\hspace{-0.1em} \mathbb{R}^d$, $d\hspace{-0.1em}\in\hspace{-0.1em} \set{2,3}$, is a bounded polyhedral domain having a Lipschitz continuous boundary $\partial\Omega$. The extra stress tensor $\SSS(\bfD\bfv)\colon\Omega\to \setR^{d\times d}_{\textup{sym}}$~depends on the strain rate tensor $\smash{\bfD\bfv\coloneqq \frac{1}{2}(\nabla\bfv+\nabla\bfv^\top)\colon\Omega\to \setR^{d\times d}_{\textup{sym}}}$, i.e., the symmetric part of the velocity gradient  $\bfL\coloneqq \nabla\bfv
\colon\Omega\to \setR^{d\times d}$.~The~\mbox{convective}~term~$\smash{[\nabla\bfv]\bfv\colon\Omega\to \mathbb{R}^d}$ is defined via $\smash{([\nabla\bfv]\bfv)_i\coloneqq \sum_{j=1}^d{v_j\partial_j v_i}}$ for all $i=1,\dots,d$.

Throughout the paper, we  assume that the extra stress tensor~$\SSS$~has~\mbox{$(p,\delta)$-structure} (cf.~Assumption~\ref{assum:extra_stress}). The relevant example falling into this class is 
\begin{align*}
    \smash{\SSS(\bfD\bfv)=\mu\, (\delta+\vert \bfD\bfv\vert)^{p-2}\bfD\bfv}\,,
\end{align*}
where $p\in (1,\infty)$, $\delta\ge 0$, and $\mu>0$.

For a discussion of the model and the state of the art, we refer to
Part~I~of~the~paper (cf.~\cite{kr-pnse-ldg-1}). As already pointed out, to
the best of the authors knowledge, there are no investigations using DG
methods for the $p$-Navier--Stokes problem
\eqref{eq:p-navier-stokes}.~In~this~paper, we continue the
investigations of Part I of the paper  (cf.~\cite{kr-pnse-ldg-1}) and prove~\mbox{convergence}~rates  for the velocity of the $p$-Navier--Stokes problem
\eqref{eq:p-navier-stokes} under the assumption~that~the velocity and
$\bfg$ satisfy natural regularity conditions and a 
smallness~condition for the velocity~in~the~energy~norm. We restrict ourselves to the
case $p \in (2,\infty)$. Our approach is inspired by the results in
\cite{dkrt-ldg} and \cite{kr-phi-ldg}. The convergence rates are
optimal for linear ansatz function, due to the new flux in the
stabilization term (cf.~\eqref{def:flux-S}). The same results are obtained for the $p$-Stokes problem without smallness condition.\newpage


\textit{This paper is organized as follows:} \!In Section
\ref{sec:preliminaries}, we introduce the employed~\mbox{notation}, define
 relevant function spaces, 
 basic assumptions on the extra stress~tensor~$\SSS$~and~its consequences, the weak formulations Problem (Q) and
Problem~(P)~of~the~system~\eqref{eq:p-navier-stokes}, and the  discrete
operators.  In Section \ref{sec:ldg}, we define our numerical
fluxes~and~derive~the flux \hspace{-0.1mm}and \hspace{-0.1mm}the \hspace{-0.1mm}primal
\hspace{-0.1mm}formulation, \hspace{-0.1mm}i.e, \hspace{-0.1mm}Problem~\hspace{-0.1mm}(Q$_h$) \hspace{-0.1mm}and \hspace{-0.1mm}Problem~\hspace{-0.1mm}(P$_h$),~\hspace{-0.1mm}of~\hspace{-0.1mm}the~\hspace{-0.1mm}\mbox{system}~\hspace{-0.1mm}\eqref{eq:p-navier-stokes}.  In Section
\ref{sec:rates}, we~\mbox{derive} error estimates for our problem
(cf.~Theorem~\ref{thm:error}, Corollary~\ref{cor:error}).  These
are the first convergence rates for a DG method for systems of
$p$-Navier--Stokes type. In Section \ref{sec:experiments}, we confirm
our theoretical findings via numerical experiments.

\section{Preliminaries}\label{sec:preliminaries}

\subsection{Function \hspace*{-0.1mm}spaces}

\!\!We \hspace*{-0.1mm}use \hspace*{-0.1mm}the \hspace*{-0.1mm}same \hspace*{-0.1mm}notation \hspace*{-0.1mm}as \hspace*{-0.1mm}in \hspace*{-0.1mm}Part~\hspace*{-0.1mm}I~\hspace*{-0.1mm}of~\hspace*{-0.1mm}the~\hspace*{-0.1mm}paper~\hspace*{-0.1mm}(cf.~\hspace*{-0.1mm}\cite{kr-pnse-ldg-1}). For the convenience of the reader, we repeat some of it.

We employ $c, C>0$ to denote generic constants, that may change from line
to line, but are not depending on the crucial quantities. For $k\in \setN$ and $p\in [1,\infty]$, we employ the customary
Lebesgue spaces $(L^p(\Omega), \smash{\norm{\cdot}_p}) $ and Sobolev
spaces $(W^{k,p}(\Omega), \smash{\norm{\cdot}_{k,p}})$, where $\Omega \hspace{-0.1em}\subseteq \hspace{-0.1em} \setR^d$, $d \hspace{-0.1em}\in \hspace{-0.1em} \set{2,3}$, is a bounded,
polyhedral Lipschitz domain.~The~space~$\smash{W^{1,p}_0(\Omega)}$
is defined as the space of functions from $W^{1,p}(\Omega)$ whose trace vanishes on $\partial\Omega$.~We~equip $\smash{W^{1,p}_0(\Omega)}$ 
with the norm $\smash{\norm{\nabla\,\cdot\,}_p}$. 

We do not distinguish between spaces for scalar,
vector-~or~\mbox{tensor-valued}~functions. However, we always denote
vector-valued functions by boldface letters~and~tensor-valued
functions by capital boldface letters. The mean value of a locally
integrable function $f$ over a measurable set $M\subseteq \Omega$ is
denoted by
${\mean{f}_M\coloneqq \!\smash{\dashint_M f
    \,\textup{d}x}\coloneqq \!\smash{\frac 1 {|M|}\int_M f
    \,\textup{d}x}}$.~Moreover, we employ the notation
$\hskp{f}{g}\coloneqq \int_\Omega f g\,\textup{d}x$, whenever the
right-hand side is well-defined.

From the theory of Orlicz spaces (cf.~\cite{ren-rao}) and generalized
Orlicz spaces (cf.~\cite{HH19}) we use N-functions
$\psi \colon \setR^{\geq 0} \to \setR^{\geq 0}$ and generalized N-functions
$\psi \colon \Omega \times \setR^{\ge 0} \to \setR^{\ge 0}$, i.e.,
$\psi$ is a Carath\'eodory function such that $\psi(x,\cdot)$ is an
N-function for a.e.~$x \in \Omega$, respectively. The
modular~is~defined~via
$\rho_\psi(f)\coloneqq \rho_{\psi,\Omega}(f)\coloneqq \int_\Omega
\psi(\abs{f})\,\textup{d}x $ if $\psi$ is an N-function, and via
$\rho_\psi(f)\coloneqq \rho_{\psi,\Omega}(f)\coloneqq \int_\Omega
\psi(x,\abs{f(x)})\,\textup{d}x $, if $\psi$ is a generalized
N-function. An N-function
$\psi$~satisfies~the~\mbox{$\Delta_2$-condition} (in short,
$\psi \hspace*{-0.1em}\in\hspace*{-0.1em}\Delta_2$), if there exists
$K\hspace*{-0.1em}>\hspace*{-0.1em} 2$ such that for all
$t \hspace*{-0.1em}\ge\hspace*{-0.1em}
0$,~it~holds~${\psi(2\,t) \hspace*{-0.1em}\leq\hspace*{-0.1em} K\,
  \psi(t)}$. We denote the smallest such constant by
$\Delta_2(\psi)\hspace*{-0.1em}>\hspace*{-0.1em}0$. 
We need the following version of~the~\mbox{$\varepsilon$-Young} inequality: for every
${\varepsilon\!>\! 0}$,~there exits a constant $c_\epsilon\!>\!0 $, depending~only~on~$\Delta_2(\psi),\Delta_2( \psi ^*)\!<\!\infty$, such that~for~every~${s,t\!\geq\! 0}$, it holds
\begin{align}
  \label{ineq:young}
  \begin{split}
    t\,s&\leq \epsilon \, \psi(t)+ c_\epsilon \,\psi^*(s)\,.
  \end{split}
\end{align}

\subsection{Basic \hspace*{-0.1mm}properties \hspace*{-0.1mm}of \hspace*{-0.1mm}the \hspace*{-0.1mm}extra \hspace*{-0.1mm}stress \hspace*{-0.1mm}tensor}

\!\!Throughout~\hspace*{-0.1mm}the~\hspace*{-0.1mm}entire~\hspace*{-0.1mm}\mbox{paper}, we always assume that the extra stress tensor 
$\SSS$
has $(p,\delta)$-structure, which is defined here in a more stringent
way compared to Part I of the paper (cf.~\cite{kr-pnse-ldg-1}). A detailed
discussion and full proofs can be found, e.g., in
\cite{die-ett,dr-nafsa}. For a given tensor $\bfA\in \setR^{d\times d}$, we denote its symmetric part by
${\bfA^{\textup{sym}}\coloneqq \frac{1}{2}(\bfA+\bfA^\top)\in
  \setR^{d\times d}_{\textup{sym}}\coloneqq \{\bfA\in \setR^{d\times
    d}\mid \bfA=\bfA^\top\}}$.

For $p \in (1,\infty)$~and~$\delta\ge 0$, we define a special N-function
$\phi=\phi_{p,\delta}\colon\setR^{\ge 0}\to \setR^{\ge 0}$ by
\begin{align} 
  \label{eq:def_phi} 
  \varphi(t)\coloneqq  \int _0^t \varphi'(s)\, \mathrm ds,\quad\text{where}\quad
  \varphi'(t) \coloneqq  (\delta +t)^{p-2} t\,,\quad\textup{ for all }t\ge 0\,.
\end{align}
The properties of $\phi$ are discussed in detail in \cite{die-ett,dr-nafsa,kr-pnse-ldg-1}.
An important tool in our analysis play {\rm shifted N-functions}
$\{\psi_a\}_{\smash{a \ge 0}}$,~cf.~\cite{DK08,dr-nafsa}. For a given N-function $\psi\colon\mathbb{R}^{\ge 0}\to \mathbb{R}^{\ge
  0}$ we define the family  of shifted N-functions ${\psi_a\colon\mathbb{R}^{\ge
    0}\to \mathbb{R}^{\ge 0}}$,~${a \ge 0}$,~via
\begin{align}
  \label{eq:phi_shifted}
  \psi_a(t)\coloneqq  \int _0^t \psi_a'(s)\, \mathrm ds\,,\quad\text{where }\quad
  \psi'_a(t)\coloneqq \psi'(a+t)\frac {t}{a+t}\,,\quad\textup{ for all }t\ge 0\,.
\end{align}\newpage

\begin{assumption}[Extra stress tensor]\label{assum:extra_stress} We assume that the extra stress tensor $\SSS\colon\setR^{d\times d}\to \setR^{d\times d}_{\textup{sym}}$ belongs to $C^0(\setR^{d\times d}; \setR^{d\times d}_{\textup{sym}})\cap C^1(\setR^{d\times d}\setminus\{\mathbf{0}\}; \setR^{d\times d}_{\textup{sym}}) $ and satisfies $\SSS(\bfA)=\SSS(\bfA^{\textup{sym}})$ for all $\bfA\in \setR^{d\times d}$ and $\SSS(\mathbf{0})=\mathbf{0}$. Moreover, we assume~that~the~tensor $\SSS=(S_{ij})_{i,j=1,\dots,d}$ has $(p,\delta)$-structure, i.e.,
  for some $p \in (1, \infty)$, $ \delta\in [0,\infty)$, and the
  N-function $\varphi=\varphi_{p,\delta}$ (cf.~\eqref{eq:def_phi}), there
  exist constants $C_0, C_1 >0$ such that
   \begin{align}
       \sum\limits_{i,j,k,l=1}^d \partial_{kl} S_{ij} (\bfA)
       B_{ij}B_{kl} &\ge C_0 \, \frac{\phi'(|\bfA^{\textup{sym}}|)}{|\bfA^{\textup{sym}}|}\,|\bfB^{\textup{sym}}|^2\,,\label{assum:extra_stress.1}
       \\
       \big |\partial_{kl} S_{ij}({\bfA})\big | &\le C_1 \, \frac{\phi'(|\bfA^{\textup{sym}}|)}{|\bfA^{\textup{sym}}|}\label{assum:extra_stress.2}
   \end{align}
are satisfied for all $\bfA,\bfB \in \setR^{d\times d}$ with $\bfA^{\textup{sym}}\neq \mathbf{0}$ and all $i,j,k,l=1,\dots,d$.~The~constants $C_0,C_1>0$ and $p\in (1,\infty)$ are called the {characteristics} of $\SSS$.
\end{assumption}

\begin{remark}
  {\rm (i) It is well-known (cf.~\cite{dr-nafsa}) that the conditions
    \eqref{assum:extra_stress.1}, \eqref{assum:extra_stress.2} imply~the conditions in the definition of the $(p,\delta)$-structure in
    Part~I~of~the paper (cf.~\cite{kr-pnse-ldg-1}).

    (ii)     Assume that $\SSS$ satisfies Assumption \ref{assum:extra_stress} for some
    $\delta \in [0,\delta_0]$.~Then,~if~not otherwise stated, the
    constants in the estimates depend only on the characteristics~of~$\SSS$~and on $\delta_0\ge 0$, but are independent of $\delta\ge 0$.

    (iii)     Let $\phi$ be defined in \eqref{eq:def_phi} and 
    $\{\phi_a\}_{a\ge 0}$ be the corresponding family of the shifted \mbox{N-functions}. Then, the operators 
    $\SSS_a\colon\mathbb{R}^{d\times d}\to \smash{\mathbb{R}_{\textup{sym}}^{d\times
      d}}$, $a \ge 0$, defined, for every $a \ge 0$
    and~$\bfA \in \mathbb{R}^{d\times d}$, via 
\begin{align}
  \label{eq:flux}
  \SSS_a(\bfA) \coloneqq 
  \frac{\phi_a'(\abs{\bfA^{\textup{sym}}})}{\abs{\bfA^{\textup{sym}}}}\,
  \bfA^{\textup{sym}}\,, 
\end{align}
have $(p, \delta +a)$-structure.  In this case, the characteristics of
$\SSS_a$ depend~only~on~${p\in (1,\infty)}$ and are independent of
$\delta \geq 0$ and $a\ge 0$.
  }
\end{remark}

Closely related to the extra stress tensor $\SSS$ with
$(p,\delta)$-structure are the functions
$\bfF,\bfF^*\colon\setR^{d\times d}\to \setR^{d\times d}_{\textup{sym}}$, 
defined, for every $\bfA\in \mathbb{R}^{d\times d}$, via
\begin{align}
\begin{aligned}
    \bfF(\bfA)&\coloneqq (\delta+\vert \bfA^{\textup{sym}}\vert)^{\smash{\frac{p-2}{2}}}\bfA^{\textup{sym}}\,,\\
    \bfF^*(\bfA)&\coloneqq (\delta^{p-1}+\vert \bfA^{\textup{sym}}\vert)^{\smash{\frac{p'-2}{2}}}\bfA^{\textup{sym}}\,.
    \end{aligned}\label{eq:def_F}
\end{align}
The connection between
$\SSS,\bfF,\bfF^* \hspace{-0.05em}\colon\hspace{-0.05em}\setR^{d \times d}
\hspace{-0.05em}\to\hspace{-0.05em} \setR^{d\times d}_{\textup{sym}}$ and
$\phi_a,(\phi_a)^*\hspace{-0.05em}\colon\hspace{-0.05em}\setR^{\ge
  0}\hspace{-0.05em}\to\hspace{-0.05em} \setR^{\ge
  0}$,~${a\hspace{-0.05em}\ge\hspace{-0.05em} 0}$, is best explained
by the following result (cf.~\cite{die-ett,dr-nafsa,dkrt-ldg}).

\begin{proposition}
  \label{lem:hammer}
  Let $\SSS$ satisfy Assumption~\ref{assum:extra_stress}, let $\varphi$ be defined in \eqref{eq:def_phi}, and let $\bfF,\bfF^*$ be defined in \eqref{eq:def_F}. Then, uniformly with respect to 
  $\bfA, \bfB \in \setR^{d \times d}$, we have that\vspace{-1mm}
    \begin{align}\label{eq:hammera}
        \begin{aligned}
        \big(\SSS(\bfA) - \SSS(\bfB)\big)
      :(\bfA-\bfB ) &\sim  \abs{ \bfF(\bfA) - \bfF(\bfB)}^2
      \\
      &\sim \phi_{\abs{\bfA^{\textup{sym}}}}(\abs{\bfA^{\textup{sym}}
        - \bfB^{\textup{sym}}})
      \\
      &\sim(\varphi_{\abs{\bfA^{\textup{sym}} }})^*(\abs{\SSS(\bfA ) - \SSS(\bfB )})
      \\&\sim (\phi^*) _{|\SSS ( \bfA^{\textup{sym}})|} (|\SSS (\bfA) - \SSS ( \bfB) |)
      \,,
      \end{aligned}
    \end{align}
    \vspace{-6.5mm}
    
    \begin{align}
         \mspace{-13mu}\smash{\abs{ \bfF^*(\bfA) - \bfF^*(\bfB)}^2}
      \label{eq:hammerf}
      &\sim
        \smash{\smash{\big(\phi^*\big)}_{\smash{\abs{\bfA^{\textup{sym}}}}}(\abs{\bfA^{\textup{sym}}
        - \bfB^{\textup{sym}}})} \,,
      \\[2mm]
       \label{eq:F-F*3}
     \mspace{-13mu}\smash{\abs{\bfF^*(\SSS(\bfB))-\bfF^*(\SSS(\bfA))}^2}
    &\sim  \smash{\abs{\bfF(\bfB)-\bfF(\bfA)}^2}\,.
    \end{align}
  The constants in \eqref{eq:hammera} 
  depend only on the characteristics of ${\SSS}$.
\end{proposition}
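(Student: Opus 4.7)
The plan is to assemble the chain of equivalences from three building blocks: a Taylor representation of $\SSS(\bfA)-\SSS(\bfB)$ controlled via \eqref{assum:extra_stress.1}--\eqref{assum:extra_stress.2}, an analogous integral representation for $\bfF(\bfA)-\bfF(\bfB)$, and the shift-exchange lemma for complementary N-functions. Since $\SSS$, $\bfF$, and $\bfF^*$ depend only on the symmetric part of their arguments, I may replace $\bfA,\bfB$ by $\bfA^{\textup{sym}},\bfB^{\textup{sym}}$ throughout, so it is enough to prove the equivalences for symmetric tensors and restore the symmetrisation at the end.

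For the first two lines of \eqref{eq:hammera}, I would write
\[
 \bigl(\SSS(\bfA)-\SSS(\bfB)\bigr):(\bfA-\bfB) = \int_0^1 \partial_{kl} S_{ij}\bigl(\bfB+\theta(\bfA-\bfB)\bigr)(A_{kl}-B_{kl})(A_{ij}-B_{ij})\,\mathrm d\theta,
\]
and apply \eqref{assum:extra_stress.1}, \eqref{assum:extra_stress.2} pointwise to obtain
\[
 \bigl(\SSS(\bfA)-\SSS(\bfB)\bigr):(\bfA-\bfB) \sim \abs{\bfA-\bfB}^2 \int_0^1 \frac{\varphi'(\abs{\bfB+\theta(\bfA-\bfB)})}{\abs{\bfB+\theta(\bfA-\bfB)}}\,\mathrm d\theta.
\]
The classical integral estimate for $\varphi$ (cf.~\cite{dr-nafsa}) identifies the right-hand side with $\varphi_{\abs{\bfA}}(\abs{\bfA-\bfB})$ uniformly in $\bfA,\bfB$, and a direct computation of $\bfF(\bfA)-\bfF(\bfB)$ via the same parameterisation along the segment produces the identical integral, so that $\abs{\bfF(\bfA)-\bfF(\bfB)}^2 \sim \varphi_{\abs{\bfA}}(\abs{\bfA-\bfB})$ as well.

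To reach the third line, I would combine the pointwise bound $\abs{\SSS(\bfA)-\SSS(\bfB)} \sim \varphi'_{\abs{\bfA}}(\abs{\bfA-\bfB})$, obtained again from the Taylor representation and \eqref{assum:extra_stress.2}, with the Young-equality identity $\psi^*(\psi'(t)) \sim \psi(t)$ applied to the shifted N-function $\psi=\varphi_{\abs{\bfA}}$. To pass to the shift $\abs{\SSS(\bfA^{\textup{sym}})}$ in the fourth line, I invoke the shift-exchange lemma $(\varphi_a)^* \sim (\varphi^*)_{\varphi'(a)}$ (cf.~\cite{DK08,dr-nafsa}) together with $\abs{\SSS(\bfA)} \sim \varphi'(\abs{\bfA})$. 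Identity \eqref{eq:hammerf} is obtained by the same argument with $\varphi$ replaced throughout by $\varphi^*$: both $\varphi,\varphi^* \in \Delta_2$ since $p\in(1,\infty)$, and $\bfF^*$ plays for $\varphi^*$ exactly the role that $\bfF$ plays for $\varphi$. Finally, \eqref{eq:F-F*3} follows by applying \eqref{eq:hammerf} to the pair $(\SSS(\bfA),\SSS(\bfB))$ and chaining the resulting equivalence through the fourth line of \eqref{eq:hammera}.

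The main technical obstacle is the shift-exchange lemma $(\varphi_a)^* \sim (\varphi^*)_{\varphi'(a)}$, which is what enables the transition between a shift by $\abs{\bfA^{\textup{sym}}}$ in $\varphi$ and a shift by $\abs{\SSS(\bfA^{\textup{sym}})}$ in $\varphi^*$; its justification is not pointwise and relies essentially on $\varphi,\varphi^* \in \Delta_2$. Once this lemma is in hand, the remaining estimates chain together routinely, and the uniformity of the constants in the characteristics of $\SSS$ (and their independence of $\delta \ge 0$) follows from the scaling properties of the family $\{\varphi_a\}_{a \ge 0}$ recorded in \cite{dr-nafsa} together with the independence claim of the remark following Assumption~\ref{assum:extra_stress}.
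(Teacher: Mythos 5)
The paper itself gives no proof of Proposition~\ref{lem:hammer}: it is quoted from the literature (cf.~\cite{die-ett,dr-nafsa,dkrt-ldg}), so there is no in-paper argument to compare with. Your sketch reconstructs exactly the standard proofs from those references (reduction to symmetric tensors, Taylor representation along the segment combined with \eqref{assum:extra_stress.1}--\eqref{assum:extra_stress.2}, the classical estimate $\int_0^1 \frac{\phi'(\abs{\bfB+\theta(\bfA-\bfB)})}{\abs{\bfB+\theta(\bfA-\bfB)}}\,\mathrm{d}\theta \sim \frac{\phi'(\abs{\bfA}+\abs{\bfA-\bfB})}{\abs{\bfA}+\abs{\bfA-\bfB}}$, the Young identity $\psi^*(\psi'(t))\sim\psi(t)$, and the conjugate shift change $(\phi_a)^*\sim(\phi^*)_{\phi'(a)}$ together with $\abs{\SSS(\bfA)}\sim\phi'(\abs{\bfA^{\textup{sym}}})$, cf.~\cite{DK08,dr-nafsa}), and the way you obtain \eqref{eq:hammerf} by viewing $\bfF^*$ as the ``$\bfF$-function'' of $\phi^*$ and \eqref{eq:F-F*3} by applying \eqref{eq:hammerf} to the pair $(\SSS(\bfA),\SSS(\bfB))$ and chaining through the last line of \eqref{eq:hammera} is the intended route. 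One step is glossed over: the \emph{lower} bounds for $\abs{\bfF(\bfA)-\bfF(\bfB)}$ and $\abs{\SSS(\bfA)-\SSS(\bfB)}$ do not follow from the segment representation alone, since the matrix-valued integrand may cancel; the standard fix is to first establish the quadratic-form lower bound $(\SSS(\bfA)-\SSS(\bfB)):(\bfA-\bfB)\gtrsim \phi_{\abs{\bfA^{\textup{sym}}}}(\abs{\bfA^{\textup{sym}}-\bfB^{\textup{sym}}})$ (and its analogue for $\bfF$, whose differential is uniformly positive definite for all $p\in(1,\infty)$) and then use Cauchy--Schwarz in the form $\abs{\SSS(\bfA)-\SSS(\bfB)}\ge (\SSS(\bfA)-\SSS(\bfB)):(\bfA-\bfB)/\abs{\bfA-\bfB}$, exactly as in the cited works. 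With that routine adjustment your outline is correct and coincides with the proofs the paper relies on.
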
 
\begin{remark}\label{rem:sa}
  {\rm
For the operators $\SSS_a\hspace{-0.1em}\colon\hspace{-0.1em}\mathbb{R}^{d\times d}\hspace{-0.1em}\to\hspace{-0.1em}\smash{\mathbb{R}_{\textup{sym}}^{d\times
      d}}$, $a \ge  0$, defined~in~\eqref{eq:flux},~the~asser-tions of Proposition \ref{lem:hammer} hold with $\phi\colon\mathbb{R}^{\ge 0}\to \mathbb{R}^{\ge 0}$ replaced
by $\phi_a\colon\mathbb{R}^{\ge 0}\to \mathbb{R}^{\ge 0}$, $a\ge 0$.}
\end{remark}

The following results can be found in~\cite{DK08,dr-nafsa}.\newpage

\begin{lemma}[Change of Shift]\label{lem:shift-change}
    Let $\varphi$ be defined in \eqref{eq:def_phi} and let $\bfF$ be defined in \eqref{eq:def_F}. Then,
  for each $\varepsilon>0$, there exists $c_\varepsilon\geq 1$ (depending only
  on~$\varepsilon>0$ and the characteristics of $\phi$) such that for every $\bfA,\bfB\in\smash{\setR^{d \times d}_{\textup{sym}}}$ and $t\geq 0$, it holds
  \begin{align*}
    \smash{\phi_{\abs{\bfB}}(t)}&\leq \smash{c_\varepsilon\, \phi_{\abs{\bfA}}(t)
    +\varepsilon\, \abs{\bfF(\bfB) - \bfF(\bfA)}^2\,,}
    \\
        \smash{\phi_{\abs{\bfB}}(t)}&\leq \smash{c_\varepsilon\, \phi_{\abs{\bfA}} (t)
    +\varepsilon\, \phi_{\abs{\bfA}}\big(\bigabs{\abs{\bfB} - \abs{\bfA}}\big)\,,}
    \\
    \smash{(\phi_{\abs{\bfB}})^*(t)}&\leq \smash{c_\varepsilon\, (\phi_{\abs{\bfA}})^*(t)
                                      +\varepsilon\, \abs{\bfF(\bfB) - \bfF(\bfA)}^2}\,,
   \\
    \smash{(\phi_{\abs{\bfB}})^*(t)}&\leq \smash{c_\varepsilon\, (\phi_{\abs{\bfA}})^*(t)
    +\varepsilon\, \phi_{\abs{\bfA}}\big(\bigabs{\abs{\bfB} - \abs{\bfA}}\big)}\,.
  \end{align*}
\end{lemma}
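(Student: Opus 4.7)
The plan is to reduce the four vector-valued inequalities to two scalar change-of-shift statements for the underlying N-function $\varphi$, and then to prove those scalar statements by a case distinction in the spirit of \cite{DK08,dr-nafsa}. First, observe that $\varphi_{\abs{\bfA}}(t)$ and $(\varphi_{\abs{\bfA}})^*(t)$ depend on $\bfA$ only through $\abs{\bfA}$. Moreover, Proposition~\ref{lem:hammer} yields $\abs{\bfF(\bfA)-\bfF(\bfB)}^2 \sim \varphi_{\abs{\bfA}}(\abs{\bfA-\bfB})$ for symmetric $\bfA,\bfB$, which combined with the triangle inequality $\bigabs{\abs{\bfA}-\abs{\bfB}} \leq \abs{\bfA-\bfB}$ and the monotonicity of $\varphi_{\abs{\bfA}}$ gives $\varphi_{\abs{\bfA}}(\bigabs{\abs{\bfA}-\abs{\bfB}}) \leq c\,\abs{\bfF(\bfA)-\bfF(\bfB)}^2$. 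Hence the second stated inequality implies the first, and the fourth implies the third, so it suffices to establish the scalar estimates
\begin{equation*}
\varphi_b(t) \leq c_\varepsilon\, \varphi_a(t) + \varepsilon\, \varphi_a(\abs{b-a})\,,\qquad
(\varphi_b)^*(t) \leq c_\varepsilon\, (\varphi_a)^*(t) + \varepsilon\, \varphi_a(\abs{b-a})\,,
\end{equation*}
uniformly for $a,b,t \geq 0$.

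For the first scalar inequality, I would invoke the standard equivalent representation $\varphi_a(t) \sim (\delta+a+t)^{p-2}\,t^2$ supplied by the $(p,\delta)$-structure of $\varphi$, and split on the size of $\abs{b-a}$. If $\abs{b-a} \leq \tfrac{1}{2}(\delta+a+t)$, then $\delta+b+t \sim \delta+a+t$, which directly gives the stronger bound $\varphi_b(t) \sim \varphi_a(t)$. In the opposite regime $\abs{b-a} > \tfrac{1}{2}(\delta+a+t)$, the quantity $\abs{b-a}$ dominates $\delta+a+t$, and a direct estimate using monotonicity of $\varphi$ together with $\Delta_2(\varphi) < \infty$ yields $\varphi_b(t) \leq c\,\varphi(\abs{b-a}) \leq c\,\varphi_a(\abs{b-a})$, whose contribution is absorbed by the $\varepsilon$-weighted perturbation on the right-hand side via \eqref{ineq:young}.

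For the second scalar inequality, I would proceed analogously using the equivalent representation $(\varphi_a)^*(t) \sim (\varphi^*)_{\varphi'(a)}(t)$, and performing the case split on $\abs{\varphi'(b)-\varphi'(a)}$ versus $\varphi'(a)+t$. The Young-type identity $\varphi^*(\varphi'(s)) \sim \varphi(s)$, combined with the two-sided comparison $\abs{\varphi'(b)-\varphi'(a)} \sim \varphi''(\max\{a,b\})\abs{b-a}$ from the mean value theorem and $\Delta_2$, then permits rewriting the resulting $(\varphi^*)_{\varphi'(a)}$-perturbation as $\varphi_a(\abs{b-a})$, matching the claimed right-hand side.

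The main obstacle is this second scalar inequality: its right-hand side involves $\varphi_a$ rather than $(\varphi_a)^*$, so the standard change of shift for $(\varphi_b)^*$ has to be paired carefully with duality identities in the $\Delta_2$ setting (ensured throughout by $p \in (1,\infty)$) in order to translate a $(\varphi^*)_{\varphi'(a)}$-perturbation back into a $\varphi_a$-perturbation. Once both scalar statements are in hand, the four vector-valued inequalities follow at once from the reduction carried out in the first paragraph.
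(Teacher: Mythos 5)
Your reduction of the four matrix inequalities to two scalar ones is correct: by Proposition~\ref{lem:hammer} one has $\abs{\bfF(\bfB)-\bfF(\bfA)}^2\sim\phi_{\abs{\bfA}}(\abs{\bfA-\bfB})$, and together with $\bigabs{\abs{\bfB}-\abs{\bfA}}\le\abs{\bfA-\bfB}$ and monotonicity the second and fourth inequalities do imply the first and third after rescaling $\varepsilon$. The equivalence $\phi_a(t)\sim(\delta+a+t)^{p-2}t^2$ and the comparability $\phi_b(t)\le c\,\phi_a(t)$ in the regime $\abs{b-a}\le\tfrac12(\delta+a+t)$ are also fine. (For the record, the paper does not prove this lemma at all; it is quoted from \cite{DK08,dr-nafsa}, so your sketch has to stand on its own.)

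The genuine gap is in the complementary regime, which is exactly where the content of the lemma lies. There your argument only yields $\phi_b(t)\le c\,\phi_a(\abs{b-a})$ with a \emph{fixed} constant $c$, and the assertion that this contribution ``is absorbed by the $\varepsilon$-weighted perturbation via \eqref{ineq:young}'' does not make sense: \eqref{ineq:young} estimates a product $t\,s$, but at that point no product is left --- you have a single modular term, and a fixed constant in front of $\phi_a(\abs{b-a})$ cannot be shrunk to $\varepsilon$ after the fact. What your case distinction proves is only the weaker bound $\phi_b(t)\le c\,\phi_a(t)+c\,\phi_a(\abs{b-a})$, which is not the statement of the lemma and would not suffice for the absorption arguments in the proof of Theorem~\ref{thm:error}, where $\varepsilon$ must be taken small. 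The $\varepsilon$ has to be produced inside the case analysis, for instance by splitting at $t\le\kappa\abs{b-a}$ versus $t>\kappa\abs{b-a}$ with $\kappa=\kappa(\varepsilon)$: in the first case convexity gives $\phi_b(t)\le\kappa\,\phi_b(\abs{b-a})\le c\,\kappa\,\phi_a(\abs{b-a})$, since $\delta+b+\abs{b-a}\sim\delta+a+\abs{b-a}$ with absolute constants, so $\kappa\coloneqq\varepsilon/c$ works; in the second case $b\le a+\kappa^{-1}t$ yields $\delta+b+t\le(1+\kappa^{-1})(\delta+a+t)$ and hence $\phi_b(t)\le c_\varepsilon\,\phi_a(t)$. (Alternatively, one can keep the product structure and apply Young's inequality to $(\delta+\abs{b-a})^{p-2}t\cdot t$ before collapsing it into a modular.) The same defect recurs in your treatment of the conjugate inequality: the identities $(\phi_a)^*\sim(\phi^*)_{\phi'(a)}$, $\abs{\phi'(b)-\phi'(a)}\sim\phi_a'(\abs{b-a})$ and $(\phi_a)^*(\phi_a'(s))\sim\phi_a(s)$ that you invoke are the right tools for converting the perturbation into $\phi_a(\abs{b-a})$, but the $\varepsilon$-smallness again has to be generated by an argument of the above type (applied to the N-function $\phi^*\sim\phi_{p',\delta^{p-1}}$ with shifts $\phi'(a),\phi'(b)$), not by ``absorption''.
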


\subsection{The $p$-Navier--Stokes system} 
Let us briefly recall some well-known facts about the $p$-Navier--Stokes system
 \eqref{eq:p-navier-stokes}. For $p\in (1,\infty)$, we define the function spaces\\[-4.5mm]
\begin{align*}
     \Vo\coloneqq (W^{1,p}_0(\Omega))^d\,,\qquad
  \Qo\coloneqq L_0^{p'}(\Omega)\coloneqq \big\{f\in
  L^{p'}(\Omega)\;|\;\mean{f}_{\Omega}=0\big\}\,.\\[-6mm] 
\end{align*}
    With this particular notation, the weak formulation of problem \eqref{eq:p-navier-stokes} is the following:
    
\textbf{Problem (Q).} For given $\bfg\in L^{p'}(\Omega)$, find $(\bfv,q)\in \Vo \times \Qo$ such that  for all $(\bfz,z)^\top\in \Vo\times Q $, it holds
\begin{align}
    (\SSS(\bfD\bfv),\bfD\bfz)+([\nabla\bfv]\bfv,\bfz)-(q,\divo\bfz)&=(\bfg,\bfz)\label{eq:q1}\,,\\
    (\divo\bfv,z)&=0\label{eq:q2}\,.
\end{align}

Alternatively, we can reformulate Problem (Q) ``hidding'' the pressure.

\textbf{Problem (P).} \hspace{-0.1mm}For \hspace{-0.1mm}given \hspace{-0.1mm}$\bfg\!\in \!L^{p'}(\Omega)$, \hspace{-0.1mm}find \hspace{-0.1mm}$\bfv\!\in\! \Vo(0)$ 
\hspace{-0.1mm}such \hspace{-0.1mm}that \hspace{-0.1mm}for \hspace{-0.1mm}all \hspace{-0.1mm}${\bfz\!\in\! \Vo(0)}$,~\hspace{-0.1mm}it~\hspace{-0.1mm}holds
\begin{align}
    (\SSS(\bfD\bfv),\bfD\bfz)+([\nabla\bfv]\bfv,\bfz)&=(\bfg,\bfz)\,,\label{eq:p}
\end{align}
where $\Vo(0)\coloneqq \{\bfz\in \Vo\mid \divo \bfz=0\}$.

The theory of pseudo-monotone operators yields the existence of a weak
solution of Problem (P) for $p>\frac{3d}{d+2}$ (cf.~\cite{lions-quel}). DeRham's lemma, the solvability of the divergence
equation, and the negative norm theorem,~then, ensure the solvability
of Problem (Q). {There holds the following regularity property of the
pressure if the velocity satisfies a natural regularity assumption.
\begin{lemma}\label{lem:pres}
  Let $\SSS$ satisfy Assumption~\ref{assum:extra_stress} with
  $p\hspace{-0.1em}\in \hspace{-0.1em}(2,\infty)$ and $\delta \hspace{-0.1em}> \hspace{-0.1em}0$, let~${\bfg\hspace{-0.1em}\in \hspace{-0.1em}
  L^{p'}(\Omega)}$, and let $(\bfv,q)^\top \in \Vo(0)\times \Qo$
  be a weak solution of Problem (Q) with $\bfF(\bfD \bfv) \in
  W^{1,2}(\Omega)$. Then, it holds $\nabla q \!\in \! L^{p'}(\Omega)$.
  If, in addition, $\bfg\!\in \! L^{2}(\Omega)$, then~${(\delta
    +\abs{\bfD\bfv})^{2-p}\abs{\nabla q}^2 \!\in \! L^1(\Omega)}$.
\end{lemma}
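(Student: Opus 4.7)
The plan is to read off $\nabla q$ from the momentum equation and control each term that appears. Testing \eqref{eq:q1} with $\bfz\in C_c^\infty(\Omega)^d$ yields, in the sense of distributions,
\begin{equation*}
\nabla q = \bfg + \divo\SSS(\bfD\bfv) - [\nabla\bfv]\bfv,
\end{equation*}
so the first claim reduces to bounding each of the three terms on the right in $L^{p'}(\Omega)$, and the second to showing that their squares are integrable against the weight $(\delta+|\bfD\bfv|)^{2-p}$.

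\emph{Stress term.} Using Assumption~\ref{assum:extra_stress} and the chain rule I would first obtain the pointwise bound $|\nabla\SSS(\bfD\bfv)|\lesssim (\delta+|\bfD\bfv|)^{p-2}|\nabla\bfD\bfv|$, and then rewrite it with the standard identity $|\nabla\bfF(\bfD\bfv)|^2\sim (\delta+|\bfD\bfv|)^{p-2}|\nabla\bfD\bfv|^2$ (differentiate \eqref{eq:def_F}) as
\begin{equation*}
|\nabla\SSS(\bfD\bfv)|\lesssim(\delta+|\bfD\bfv|)^{(p-2)/2}\,|\nabla\bfF(\bfD\bfv)|.
\end{equation*}
Since $p>2$ forces $p'<2$, H\"older's inequality with exponents $2/p'$ and $2/(2-p')$ applies; the arithmetic identity $(p-2)p'/(2-p')=p$ then yields $\|\nabla\SSS(\bfD\bfv)\|_{p'}\lesssim \|\delta+|\bfD\bfv|\|_p^{(p-2)/2}\|\nabla\bfF(\bfD\bfv)\|_2<\infty$, which is finite by $\bfv\in\Vo$ and the hypothesis on $\bfF(\bfD\bfv)$.

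\emph{Convective term.} The regularity $\bfF(\bfD\bfv)\in W^{1,2}$, together with the Sobolev embedding ($d\le 3$), gives $\bfF(\bfD\bfv)\in L^6$; the pointwise bound $|\bfD\bfv|^{p/2}\lesssim 1+|\bfF(\bfD\bfv)|$ then yields $\bfD\bfv\in L^{3p}$. An $L^r$-Korn inequality upgrades this to $\nabla\bfv\in L^{3p}$, and since $3p>d$, Sobolev gives $\bfv\in L^\infty(\Omega)$. Hence $[\nabla\bfv]\bfv\in L^p(\Omega)\hookrightarrow L^{p'}(\Omega)$, which together with $\bfg\in L^{p'}(\Omega)$ completes the first claim.

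\emph{Weighted estimate.} Because $p>2$ and $\delta>0$, we have the pointwise bound $(\delta+|\bfD\bfv|)^{2-p}\le \delta^{2-p}$, which absorbs the $\bfg$-contribution (using $\bfg\in L^2(\Omega)$) and the convective contribution (in $L^p\hookrightarrow L^2$). The stress contribution is where the weight does essential work: the pointwise bound on $|\nabla\SSS(\bfD\bfv)|$ above gives
\begin{equation*}
|\divo\SSS(\bfD\bfv)|^2(\delta+|\bfD\bfv|)^{2-p}\lesssim |\nabla\bfF(\bfD\bfv)|^2\in L^1(\Omega),
\end{equation*}
and the conclusion follows. The most delicate step I anticipate is tracking the H\"older exponents in the stress estimate; the rest rests on standard chain rule and embedding arguments.
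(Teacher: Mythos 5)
Your argument is correct and follows the same skeleton as the paper's proof: read $\nabla q$ off the momentum equation, control $\divo\SSS(\bfD\bfv)$ via the equivalence $\abs{\nabla\SSS(\bfD\bfv)}^2\sim(\delta+\abs{\bfD\bfv})^{p-2}\abs{\nabla\bfF(\bfD\bfv)}^2$ plus H\"older (your exponent bookkeeping $(p-2)p'/(2-p')=p$ is exactly what the paper leaves implicit), obtain $\bfv\in L^\infty(\Omega)$ to handle the convective term, and for the weighted bound divide the pointwise identity by $(\delta+\abs{\bfD\bfv})^{p-2}$, using $\delta>0$ and $p>2$ to absorb the $\bfg$- and convective contributions. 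The one genuinely different sub-route is how you reach $\bfv\in L^\infty$: the paper uses $\delta>0$ and $p>2$ directly to get $\abs{\nabla^2\bfv}^2\le c\,\abs{\nabla\bfD\bfv}^2\le c\,\delta^{2-p}\abs{\nabla\bfF(\bfD\bfv)}^2$ (citing an algebraic identity relating $\nabla^2\bfv$ and $\nabla\bfD\bfv$), so that $\bfv\in W^{2,2}(\Omega)\cap W^{1,2}_0(\Omega)\hookrightarrow L^\infty(\Omega)$ for $d\le 3$, whereas you go through $\bfF(\bfD\bfv)\in L^6$, hence $\bfD\bfv\in L^{3p}$, then an $L^r$-Korn/Ne\v{c}as-type result to get $\nabla\bfv\in L^{3p}$ and Morrey's embedding. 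Your route has the advantage of being $\delta$-independent for the convective term, but it hinges on the nontrivial fact that $\bfD\bfv\in L^{3p}(\Omega)$ together with $\bfv\in W^{1,p}_0(\Omega)$ yields $\nabla\bfv\in L^{3p}(\Omega)$ on a Lipschitz domain; this is true but needs a citation (negative norm theorem/Korn in $L^r$), and the paper's $W^{2,2}$ detour avoids it. One further small point common to both proofs but worth making explicit in yours: the pointwise chain-rule bound on $\nabla\SSS(\bfD\bfv)$ presupposes that $\bfD\bfv$ is itself weakly differentiable, which for $\delta>0$ follows from $\bfF(\bfD\bfv)\in W^{1,2}(\Omega)$ (this is the content of the lemma the paper cites); as written you invoke the ``standard identity'' without noting this prerequisite.
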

\begin{proof}
  Using an algebraic identity connecting $\nabla^2 \bfv$ and $\nabla
  \bfD \bfv$ (cf.~\cite[Lemma~6.3]{mnr3}), $p>2$, and
  \cite[Lemma~3.8]{bdr-7-5} we obtain
  \begin{align}     \label{eq:embed1}
    \begin{aligned}
      \abs{\nabla^2\bfv}^2\le c\, \abs{\nabla\bfD\bfv}^2&\le
      c\,\delta^{2-p}\,\abs{\nabla\bfF(\bfD\bfv)}^2 \,,
      \\
      \abs{\nabla\bfv}^2&\le \delta^{2-p}\,\abs{\bfF(\bfD\bfv)}^2 \,.
    \end{aligned}
  \end{align}
  Thus, $\bfF (\bfD\bfv) \in W^{1,2}(\Omega)$ implies
  $\bfv \in W^{2,2}(\Omega) \cap W^{1,2}_0(\Omega)\hookrightarrow
  L^\infty (\Omega)$. In consequence, the convective term belongs to
  $W^{1,2}(\Omega)$. Using
  $\abs{\nabla \bfS(\bfD\bfv)}^2 \sim \abs{\nabla \bfF(\bfD\bfv)}^2\,
  (\delta+ \abs{\bfD\bfv})^{p-2}$
  (cf.~\cite[Proposition~2.4]{br-plasticity}) and H\"older's
  inequality we obtain that $\divo \bfS(\bfD\bfv) \in
  L^{p'}(\Omega)$. Thus, the weak formulation \eqref{eq:q1} and
  $\bfg \in L^{p'}(\Omega)$ yield that $\nabla q \in
  L^{p'}(\Omega)$. This proves the first assertion. If, in addition
  $\bfg \in L^2(\Omega)$, the above also implies that
  \eqref{eq:p-navier-stokes} holds a.e.~in $\Omega$, which implies
  a.e.~in $\Omega$
  \begin{align*}
    \begin{aligned}
      \abs{\nabla q}^2 &\le
      c\,\abs{\nabla \bfS(\bfD\bfv)}^2 + c\,\norm{\bfv}_\infty ^2 \,
      \abs{\nabla \bfv}^2+c\,\abs{\bfg}^2
      \\
      &\le c\,\abs{\nabla \bfF(\bfD\bfv)}^2 \, (\delta+
      \abs{\bfD\bfv})^{p-2}+ c\, \delta^{2-p}\, 
      \abs{\bfF(\bfD\bfv)}^2 +c\,\abs{\bfg}^2\,.
    \end{aligned}
  \end{align*}
  Dividing by $(\delta+ \abs{\bfD\bfv})^{p-2}$ and using $p>2$ yields
  the second assertion. 
\end{proof}}
\subsection{DG spaces, jumps and averages}\label{sec:dg-space}

\subsubsection{\!Triangulations}

\!\!We \hspace{-0.1mm}always \hspace{-0.1mm}denote \hspace{-0.1mm}by \hspace{-0.1mm}$\mathcal{T}_h$, \hspace{-0.1mm}$h\!>\!0$,\hspace{-0.1mm} a \hspace{-0.1mm}family \hspace{-0.1mm}of \hspace{-0.1mm}uniformly~\hspace{-0.1mm}shape
regular and conforming triangulations
of~${\Omega\hspace{-0.1em}\subseteq \hspace{-0.1em}\setR^d}$,~${d\hspace{-0.1em}\in\hspace{-0.1em} \set{2,3}}$,~cf.~\cite{BS08},~each
consisting of \mbox{$d$-dimensional} simplices $K$.  The parameter
$h\hspace{-0.1em}>\hspace{-0.1em}0$, refers to the~maximal~\mbox{mesh-size}~of~$\mathcal{T}_h$, for \hspace{-0.1mm}which
\hspace{-0.1mm}we \hspace{-0.1mm}assume \hspace{-0.1mm}for \hspace{-0.1mm}simplicity \hspace{-0.1mm}that \hspace{-0.1mm}$h \!\le\! 1$. \!Moreover, \hspace{-0.1mm}we \hspace{-0.1mm}assume~\hspace{-0.1mm}that~\hspace{-0.1mm}the~\hspace{-0.1mm}\mbox{chunkiness}~\hspace{-0.1mm}is bounded by some constant $\omega_0\!>\!0$, independent on
$h$. \!By $\Gamma_h^{i}$,~we~denote~the~interior~faces, and put
$\Gamma_h\coloneqq  \Gamma_h^{i}\cup \partial\Omega$.  We
assume~that~each~simplex~${K \in \mathcal{T}_h}$ has at most~one~face from $\partial\Omega$.  We introduce the following scalar
products~on~$\Gamma_h$
\begin{align*}
  \skp{f}{g}_{\Gamma_h} \coloneqq  \smash{\sum_{\gamma \in \Gamma_h} {\langle f, g\rangle_\gamma}}\,,\quad\text{ where }\quad\langle f, g\rangle_\gamma\coloneqq \int_\gamma f g \,\textup{d}s\quad\text{ for all }\gamma\in \Gamma_h\,,
\end{align*}
if all the integrals are well-defined. Similarly, we define the products 
$\smash{\skp{\cdot}{\cdot}_{\partial\Omega}}$ and~$\smash{\skp{\cdot}{\cdot}_{\Gamma_h^{i}}}$. We extend the notation of
modulars to the sets $\smash{\Gamma_h^{i}}$, 
$\partial \Omega$, and $\smash{\Gamma_h}$, i.e., we
define the modulars ${\rho_{\psi,B}(f)\coloneqq  \smash{\int _B
\psi(\abs{f})\,\textup{d}s}}$ for every $f \in \smash{L^\psi(B)}$, where $B= \smash{\Gamma_h^{i}}$~or~${B=\partial \Omega}$~or~${B=\smash{\Gamma_h}}$.

\subsubsection{Broken \hspace*{-0.1mm}function \hspace*{-0.1mm}spaces and \hspace*{-0.1mm}projectors}
\hspace*{-0.1mm}For \hspace*{-0.1mm}every
\hspace*{-0.1mm}$m \hspace{-0.12em}\in\hspace{-0.12em}
\setN_0$~\hspace*{-0.1mm}and~\hspace*{-0.1mm}${K\hspace{-0.12em}\in\hspace{-0.12em} \mathcal{T}_h}$, we
denote by ${\mathcal P}_m(K)$, the space of polynomials of
degree at most $m$ on $K$. Then, for given $k \in \setN_0$ and $p\in
(1,\infty)$,  
we define the spaces
\begin{align}
  \begin{split}
    Q_h^k&\coloneqq \big\{ q_h\in L^1(\Omega)\,\mid q_h|_K\in \mathcal{P}_k(K)\text{ for all }K\in \mathcal{T}_h\big\}\,,\\
    V_h^k&\coloneqq \big\{\bfv_h\in L^1(\Omega)^d\,\mid \bfv_h|_K\in \mathcal{P}_k(K)^d\text{ for all }K\in \mathcal{T}_h\big\}\,,\\
    X_h^k&\coloneqq \big\{\bfX_h\in L^1(\Omega)^{d\times d}\,\mid \bfX_h|_K\in \mathcal{P}_k(K)^{d\times d}\text{ for all }K\in \mathcal{T}_h\big\}\,,\\
        W^{1,p}(\mathcal T_h)&\coloneqq \big\{\bfw_h\in L^1(\Omega)^d\mid \bfw_h|_K\in W^{1,p}(K)^d\text{ for all }K\in \mathcal{T}_h\big\}\,.
  \end{split}\label{eq:2.19}
\end{align}
In addition, for given $k \hspace{-0.1em}\in\hspace{-0.1em} \setN_0$, we set $\Qhkc\hspace{-0.1em}\coloneqq \hspace{-0.1em} Q_h^k\cap C^0(\overline{\Omega})$.
Note~that~${W^{1,p}(\Omega)\hspace{-0.1em}\subseteq\hspace{-0.1em} \WDG}$ and
$V_h^k\hspace{-0.1em}\subseteq \hspace{-0.1em}\WDG$. We denote by ${\PiDG \hspace{-0.1em}\colon\hspace{-0.1em} L^1(\Omega)\hspace{-0.1em}\to\hspace{-0.1em} V_h^k}$, the (local)
$L^2$--projection~into~$V_h^k$, which for every $\bfv \in
L^1(\Omega)$ and $\bfz_h
\in V_h^k$ is~defined~via  $\bighskp{\PiDG \bfv}{\bfz_h}=\hskp{\bfv}{\bfz_h}$. 
Analogously, we define the (local)
$L^2$--projection into $X_h^k$, i.e., ${\PiDG\colon L^1(\Omega) \to \Xhk}$.

For every  $\bfw_h\in \WDG$, we denote by $\nabla_h \bfw_h\in L^p(\Omega)$,
the local gradient,~defined via
$(\nabla_h \bfw_h)|_K\coloneqq \nabla(\bfw_h|_K)$
for~all~${K\in\mathcal{T}_h}$.  For every $\bfw_h\in \WDG$ and
interior faces $\gamma\in \Gamma_h^{i}$ shared by adjacent elements
$K^-_\gamma, K^+_\gamma\in \mathcal{T}_h$, we~denote~by
\begin{align}
  \{\bfw_h\}_\gamma&\coloneqq \smash{\frac{1}{2}}\big(\textup{tr}_\gamma^{K^+}(\bfw_h)+
  \textup{tr}_\gamma^{K^-}(\bfw_h)\big)\in
  L^p(\gamma)\,, \label{2.20}\\
  \llbracket\bfw_h\otimes\bfn\rrbracket_\gamma
  &\coloneqq \textup{tr}_\gamma^{K^+}(\bfw_h)\otimes\bfn^+_\gamma+
    \textup{tr}_\gamma^{K^-}(\bfw_h)\otimes\bfn_\gamma^- 
    \in L^p(\gamma)\,,\label{eq:2.21}
\end{align}
the {average} and {normal jump}, resp., of $\bfw_h$ on $\gamma$.
Moreover,  for boundary faces $\gamma\in \partial\Omega$, we define boundary averages and 
boundary~jumps,~resp.,~via
\begin{align}
  \{\bfw_h\}_\gamma&\coloneqq \textup{tr}^\Omega_\gamma(\bfw_h) \in L^p(\gamma)\,,\label{eq:2.23a} \\
  \llbracket \bfw_h\otimes\bfn\rrbracket_\gamma&\coloneqq 
  \textup{tr}^\Omega_\gamma(\bfw_h)\otimes\bfn \in L^p(\gamma)\,,\label{eq:2.23} 
\end{align}
where $\bfn\colon\partial\Omega\to \mathbb{S}^{d-1}$ denotes the unit normal vector field to $\Omega$ pointing outward. 
Analogously, we
define $\{\bfX_h\}_\gamma$ and $ \llbracket\bfX_h\bfn\rrbracket_\gamma
$~for all $\bfX_h \in \Xhk$ and $\gamma\in \Gamma_h$. Furthermore, if there is no
danger~of~confusion, then we will omit the index $\gamma\in \Gamma_h$,~in~particular,  if we interpret jumps and averages as global functions defined on whole $\Gamma_h$.

\subsubsection{DG gradient and jump operators}

For every $k\in \mathbb{N}_0$ and  face $\gamma\in \Gamma_h$, we define the
\textbf{(local)~jump~\mbox{operator}}
$\smash{\boldsymbol{\mathcal{R}}_{h,\gamma}^k \colon\WDG \to X_h^k}$~for~every~${\bfw_h\in \smash{\WDG}}$ (using
Riesz representation)~via $(\boldsymbol{\mathcal{R}}_{h,\gamma}^k\bfw_h,\bfX_h)\coloneqq \langle \llbracket\bfw_h\otimes\bfn\rrbracket_\gamma,\{\bfX_h\}_\gamma\rangle_\gamma$ for all $\bfX_h\in X_h^k$. 
For every $k\in \mathbb{N}_0$, the \textbf{(global) jump operator} $\smash{\Rhk\coloneqq \sum_{\gamma\in \Gamma_h}{\boldsymbol{\mathcal{R}}_{\gamma,h}^k}\colon\WDG \to X_h^k}$, 
by definition, for every $\bfw_h\in \smash{\WDG}$ and  $\bfX_h\in X_h^k$ satisfies
\begin{align}
  \smash{\big(\Rhk\bfw_h,\bfX_h\big)=\big\langle
  \llbracket\bfw_h\otimes\bfn\rrbracket,\{\bfX_h\}\big\rangle_{\Gamma_h}\,.}\label{eq:2.25.1}
\end{align}
Then, \hspace*{-0.1mm}for \hspace*{-0.1mm}every \hspace*{-0.1mm}$k\!\in\! \mathbb{N}_0$, \hspace*{-0.1mm}the \hspace*{-0.1mm}\textbf{DG \hspace*{-0.1mm}gradient \hspace*{-0.1mm}operator} 
\hspace*{-0.1mm}$  {\Ghk\coloneqq \!\nabla_h\!-\!\Rhk\colon \!\WDG\!\to\! L^p(\Omega)}$,  
for every $\bfw_h\in \smash{\WDG}$ and $\bfX_h\in X_h^k$ satisfies
\begin{align}
\smash{\big(\Ghk\bfw_h,\bfX_h\big)=(\nabla_h\bfw_h,\bfX_h)
  -\big\langle \llbracket
  \bfw_h\otimes\bfn\rrbracket,\{\bfX_h\}\big\rangle_{\Gamma_h}
\,. } \label{eq:DGnablaR1}
\end{align} 
Apart from that, for every $\bfw_h\in \smash{\WDG}$, we introduce the {DG norm} as
\begin{align}
    \smash{\|\bfw_h\|_{\nabla,p,h}\coloneqq \|\nabla_h\bfw_h\|_p+h^{\frac{1}{p}}\big\|h^{-1}\jump{\bfw_h\otimes \bfn}\big\|_{p,\Gamma_h}\,.}
\end{align}
Owing to \cite[(A.26)--(A.28)]{dkrt-ldg}, there exists a constant $c>0$ such that~for every $\bfw_h\in \smash{\WDG}$, it holds
\begin{align}\label{eq:eqiv0}
    \smash{c^{-1}\,\|\bfw_h\|_{\nabla,p,h}\leq \big\|\Ghnk\bfw_h\big\|_p+h^{\frac{1}{p}}\big\|h^{-1}\jump{\bfw_h\otimes \bfn}\big\|_{p,\Gamma_h}\leq c\,\|\bfw_h\|_{\nabla,p,h}\,.}
\end{align}
For an  N-function $\psi$, we define the pseudo-modular\footnote{{The
  definition of an pseudo-modular can be found in \cite{Mu}.} We
  extend the notion of DG Sobolev spaces to  DG Sobolev-Orlicz
  spaces $W^{1,\psi}(\mathcal T_h)\coloneqq \big\{\bfw_h\!\in \!L^1(\Omega)\mid \bfw_h|_K\!\in\! W^{1,\psi}(K)\text{ for all }K\in \mathcal{T}_h\big\}$.} $\smash{m_{\psi,h}}\colon W^{1,\psi}(\mathcal T_h)\to \mathbb{R}^{\ge 0}$ for every $\bfw_h\in W^{1,\psi}(\mathcal T_h)$ via
  \begin{align} \label{def:mh}
    m_{\psi,h}(\bfw_h)&\coloneqq  h\,\rho_{\psi,\smash{\Gamma_h}}\big(h^{-1}\jump{\bfw_h\otimes \bfn}\big)\,.
  \end{align}
For $\psi = \phi_{p,0}$, we have that
$m_{\psi,h}(\bfw_h)=h\,\|h^{-1}\jump{\bfw_h\otimes
  \bfn}\|_{p,\Gamma_h}^p $~for~all~${\bfw_h\in W^{1,\psi}(\mathcal T_h)}$.
  
\subsubsection{Symmetric \hspace*{-0.1mm}DG \hspace*{-0.1mm}gradient \hspace*{-0.1mm}and \hspace*{-0.1mm}symmetric \hspace*{-0.1mm}jump \hspace*{-0.1mm}operators}

\!For~\hspace*{-0.1mm}\mbox{every} ${\bfw_h\in \WDG}$, we denote by $\bfD_h\bfw_h\coloneqq \![\nabla_h\bfw_h]^{\textup{sym}}\!\in\!
L^p(\Omega;\mathbb{R}^{d\times d}_{\textup{sym}})$ the~local~\mbox{symmetric}  gradient.  In addition, for every
$k\in \setN_0$ and
$\smash{X_h^{\smash{k,\textup{sym}}}\coloneqq X_h^k\cap
  L^p(\Omega;\mathbb{R}^{d\times d}_{\textup{sym}})}$,~we~define~the
{symmetric DG gradient
  operator}~$ \smash{\Dhk\colon \!\WDG\!\to\!
  L^p(\Omega;\mathbb{R}^{d\times
    d}_{\textup{sym}})}$,\hspace{.1em}for\hspace{.1em}every\hspace{.1em}${\bfw_h\!\in \!\WDG}$, via
$\smash{\Dhk\bfw_h\coloneqq [\Ghk\bfw_h]^{\textup{sym}}\!\in\!
  L^p(\Omega;\mathbb{R}^{d\times d}_{\textup{sym}})}$, i.e., for
every  
$\bfX_h\in \smash{X_h^{k,\textup{sym}}}$, we have
that
	\begin{align}
	\smash{	\big(\Dhk\bfw_h,\bfX_h\big)
		=(\bfD_h\bfw_h,\bfX_h)
		-\big\langle \llbracket \bfw_h\otimes\bfn\rrbracket,\{\bfX_h\}\big\rangle_{\Gamma_h}\,.}\label{eq:2.24}
	\end{align}
	Apart from that, for every $\bfw_h\in \WDG$, we introduce the {symmetric DG norm} as
	\begin{align}
	\smash{	\|\bfw_h\|_{\bfD,p,h}\coloneqq \|\bfD_h\bfw_h\|_p
		+\smash{h^{\frac{1}{p}}\big\|  h^{-1} \llbracket\bfw_h\otimes\bfn\rrbracket\big\|_{p,\Gamma_h}}\,.}\label{eq:2.29}
	\end{align}
	
	A discrete Korn inequality on $V_h^k$ plays an important role
        in the numerical analysis of the
        $p$-Navier--Stokes~system~\eqref{eq:p-navier-stokes}.
	 \begin{proposition}[Discrete Korn inequality]\label{korn}
	    For every $p\in (1,\infty)$ and $k\in \setN$, there
		exists a constant~${c_{\mathbf{Korn}}>0}$ such that 
		for every $\bfv_h\in V_h^k$, it holds
		\begin{align}
		\smash{\|\bfv_h\|_{\nabla,p,h}\leq c_{\mathbf{Korn}}\,\|\bfv_h\|_{\bfD,p,h}}\label{eq:equi1}\,.
		\end{align}
	\end{proposition}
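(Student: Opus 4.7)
The plan is to reduce the discrete Korn inequality on the broken space $V_h^k$ to the classical Korn inequality on $W^{1,p}_0(\Omega)^d$ via a conforming reconstruction. This is the strategy pioneered by Brenner and Karakashian--Pascal for DG Korn-type inequalities, and it adapts directly to the present functional framework. The main ingredient is an averaging operator $\mathcal{A}_h\colon V_h^k\to V_h^k\cap (W^{1,p}_0(\Omega))^d$ (for $k\ge 1$, one can set the image on a Lagrange node equal to the mean of the nodal traces from the surrounding simplices, and to $\mathbf{0}$ on $\partial\Omega$) satisfying, for every $\bfv_h\in V_h^k$, the jump-controlled error estimate
\begin{align*}
    \|\nabla_h(\bfv_h-\mathcal{A}_h\bfv_h)\|_p^p \le c\sum_{\gamma\in\Gamma_h} h_\gamma^{1-p}\,\|\jump{\bfv_h\otimes\bfn}\|_{p,\gamma}^p \le c\,h\,\rho_{\phi_{p,0},\Gamma_h}\big(h^{-1}\jump{\bfv_h\otimes\bfn}\big).
\end{align*}

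With this reconstruction in hand, the proof proceeds in three short steps. First, set $\bfv_h^c\coloneqq \mathcal{A}_h\bfv_h\in (W^{1,p}_0(\Omega))^d$ and apply the classical Korn inequality to obtain $\|\nabla \bfv_h^c\|_p\le c\,\|\bfD\bfv_h^c\|_p$. Second, use the triangle inequality and the bound above, together with $|\bfD_h\bfw_h|\le |\nabla_h\bfw_h|$ pointwise, to deduce
\begin{align*}
    \|\bfD\bfv_h^c\|_p \le \|\bfD_h\bfv_h\|_p + \|\nabla_h(\bfv_h-\bfv_h^c)\|_p \le \|\bfD_h\bfv_h\|_p + c\,h^{\frac1p}\big\|h^{-1}\jump{\bfv_h\otimes\bfn}\big\|_{p,\Gamma_h}.
\end{align*}
Third, combine with the triangle inequality for $\nabla_h\bfv_h$ to obtain
\begin{align*}
    \|\nabla_h\bfv_h\|_p &\le \|\nabla \bfv_h^c\|_p + \|\nabla_h(\bfv_h-\bfv_h^c)\|_p \\
    &\le c\,\|\bfD_h\bfv_h\|_p + c\,h^{\frac1p}\big\|h^{-1}\jump{\bfv_h\otimes\bfn}\big\|_{p,\Gamma_h}.
\end{align*}
Adding the (common) jump term to both sides and using the definition of $\|\cdot\|_{\bfD,p,h}$ in \eqref{eq:2.29} yields \eqref{eq:equi1}.

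The only genuinely nontrivial step is the construction of $\mathcal{A}_h$ and the verification of the jump-controlled approximation bound; this requires the shape-regularity of $\mathcal{T}_h$ (and the chunkiness bound $\omega_0$) to compare face-norms with element-norms via the standard scaled trace and inverse inequalities, and it is the point where the restriction $k\ge 1$ enters (Lagrange nodes are needed to define the averaging). Since the inequality is purely geometric in the jumps and gradients, the constant $c_{\mathbf{Korn}}>0$ depends only on $p$, $k$, $d$, and the chunkiness $\omega_0$, but not on $h\le 1$. The detailed verification of the averaging estimate follows verbatim the arguments of Brenner (\textit{SIAM J.~Numer.~Anal.}~2004) and is identical to the scalar case since Korn's inequality is applied only to the resulting conforming function.
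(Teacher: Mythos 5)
The paper itself offers no argument here: the proof of Proposition~\ref{korn} is a pointer to Part~I (\cite{kr-pnse-ldg-1}, Proposition~2.4), so there is no in-text proof to match your proposal against. On its own merits, your route -- a nodal-averaging enrichment $\mathcal{A}_h\colon V_h^k\to V_h^k\cap (W^{1,p}_0(\Omega))^d$ with jump-controlled error, followed by the classical first Korn inequality on $W^{1,p}_0(\Omega)$ (valid for all $1<p<\infty$ with a constant depending only on $p$ and $d$, e.g.\ by zero extension and Calder\'on--Zygmund theory) and two triangle inequalities -- is the standard and correct way to prove such discrete Korn inequalities, and the reduction is sound: the only structural input is the averaging estimate, and the remaining steps ($|\bfD_h\bfw|\le|\nabla_h\bfw|$ pointwise, adding the common jump term) are elementary.

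Two caveats you should make explicit. First, Brenner's 2004 paper treats the piecewise $H^1$ case ($p=2$); for general $p\in(1,\infty)$ you need the $L^p$ version of the averaging/enrichment estimate (available, e.g., in Di Pietro--Ern \cite{EP12} or in Buffa--Ortner), including the vector-valued variant with boundary nodes set to zero so that the boundary contribution is controlled by $\jump{\bfv_h\otimes\bfn}$ on $\partial\Omega$; the adaptation is routine but it is not literally ``verbatim'' the $p=2$ argument. Second, the natural local estimate carries weights $h_\gamma^{1-p}$, and since $1-p<0$ the passage to the global-$h$ quantity $h^{1-p}\|\jump{\bfv_h\otimes\bfn}\|_{p,\Gamma_h}^p$ appearing in $\|\cdot\|_{\nabla,p,h}$ and $\|\cdot\|_{\bfD,p,h}$ requires $h_\gamma\sim h$, i.e.\ some form of quasi-uniformity (or, equivalently, one should state and prove the inequality with facewise weights). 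The paper's global-$h$ norm conventions implicitly gloss over the same point, so this is a presentational rather than a fatal issue, but in a self-contained proof it should be acknowledged.
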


	\begin{proof}
          See \cite[Proposition 2.4]{kr-pnse-ldg-1}.
        \end{proof}
        
        We also need a version of Korn's inequality
        involving a function from $\WDG$.

    \begin{proposition}[Korn type inequality]\label{prop:kornii}
      For every $p\in (1,\infty)$ and $k\in \setN$, there exists a
      constant~${c}>0$ such that for every
      $\bfv_h\in V_h^k$ and every  $\bfw_h\in W^{2,p}(\mathcal{T}_h)$, it holds
    \begin{align*}
        \smash{\|\bfw_h-\bfv_h\|_{\nabla,p,h}^p\leq c\,\|\bfw_h-\bfv_h\|_{\bfD,p,h}^p+c\,h^p\,\|\nabla_h^2\bfw_h\|_p^p\,.}
    \end{align*}
    \end{proposition}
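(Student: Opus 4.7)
My strategy is to reduce the claim to the discrete Korn inequality on $V_h^k$ (Proposition \ref{korn}) by approximating $\bfw_h$ by a function in $V_h^k$, so that the difference lies in $V_h^k$, and then to control the interpolation error in the $\|\cdot\|_{\nabla,p,h}$-norm by $h^p\,\|\nabla_h^2\bfw_h\|_p^p$ via standard local approximation estimates for the $L^2$-projection.

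Concretely, I would set $\bfw_h^c\coloneqq \PiDG\bfw_h \in V_h^k$ (the elementwise $L^2$-projection) and decompose
\begin{align*}
\bfw_h-\bfv_h=(\bfw_h^c-\bfv_h)+(\bfw_h-\bfw_h^c).
\end{align*}
Since $\bfw_h^c-\bfv_h\in V_h^k$, Proposition \ref{korn} gives $\|\bfw_h^c-\bfv_h\|_{\nabla,p,h}\le c_{\mathbf{Korn}}\|\bfw_h^c-\bfv_h\|_{\bfD,p,h}$. Combining the triangle inequality in the $\|\cdot\|_{\nabla,p,h}^p$- and $\|\cdot\|_{\bfD,p,h}^p$-norms with the elementary bound $\|\cdot\|_{\bfD,p,h}\le \|\cdot\|_{\nabla,p,h}$ (which is immediate from $|\bfA^{\textup{sym}}|\le |\bfA|$), the proof is reduced to establishing the interpolation estimate
\begin{align*}
\|\bfw_h-\bfw_h^c\|_{\nabla,p,h}^p \le c\,h^p\,\|\nabla_h^2\bfw_h\|_p^p.
\end{align*}

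To prove this, I would treat the two contributions in \eqref{eq:2.29} separately. For the volume part, since $k\ge 1$, the standard local $L^p$-approximation property of the $L^2$-projection on shape-regular meshes yields $\|\nabla(\bfw_h-\PiDG\bfw_h)\|_{L^p(K)}\le c\,h\,|\bfw_h|_{W^{2,p}(K)}$ on every simplex $K\in\mathcal{T}_h$, and summation controls $\|\nabla_h(\bfw_h-\bfw_h^c)\|_p^p$. For the jump part, on each face $\gamma\subseteq \partial K^\pm\in\Gamma_h$ I would apply a scaled trace inequality
\begin{align*}
\|u\|_{L^p(\gamma)}^p\le c\,h^{-1}\|u\|_{L^p(K^\pm)}^p+c\,h^{p-1}\|\nabla u\|_{L^p(K^\pm)}^p
\end{align*}
to $u=\bfw_h-\bfw_h^c$ on each side of $\gamma$, and combine with the local estimate $\|\bfw_h-\PiDG\bfw_h\|_{L^p(K)}\le c\,h^2\,|\bfw_h|_{W^{2,p}(K)}$ and the gradient estimate above; this produces $\|\bfw_h-\bfw_h^c\|_{L^p(\gamma)}^p\le c\,h^{2p-1}\,|\bfw_h|_{W^{2,p}(K^\pm)}^p$. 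Summing over $\gamma\in\Gamma_h$ and multiplying by the normalization $h^{1-p}$ from \eqref{eq:2.29} yields exactly the required $h^p\,\|\nabla_h^2\bfw_h\|_p^p$ contribution.

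The only delicate step is the jump estimate: because $\bfw_h\in W^{2,p}(\mathcal{T}_h)$ is allowed to carry arbitrary jumps across interior faces, there is no cancellation between the two one-sided traces of $\bfw_h-\bfw_h^c$, and the correct $h^p$-scaling emerges only after balancing the two terms of the trace inequality against the $L^p$- and $W^{1,p}$-approximation orders of $\PiDG$. All remaining ingredients are triangle inequalities and the discrete Korn inequality already provided by Proposition \ref{korn}.
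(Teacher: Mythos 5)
Your proposal is correct and follows essentially the same route as the paper: decompose $\bfw_h-\bfv_h$ via the $L^2$-projection $\PiDG\bfw_h$, apply the discrete Korn inequality (Proposition~\ref{korn}) to $\PiDG\bfw_h-\bfv_h\in V_h^k$ together with $\|\cdot\|_{\bfD,p,h}\le\|\cdot\|_{\nabla,p,h}$, and bound $\|\bfw_h-\PiDG\bfw_h\|_{\nabla,p,h}^p$ by $c\,h^p\|\nabla_h^2\bfw_h\|_p^p$ using the approximation and trace estimates for $\PiDG$. The only difference is that you spell out the local approximation and scaled trace estimates which the paper simply cites from its companion work.
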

    \begin{proof} 
    Using that
      $\bfw_h-\PiDG\bfw_h=\bfw_h-\PiDG\bfw_h-\PiDG(\bfw_h-\PiDG\bfw_h)$,
      the~trace~inequality \cite[(A.21)]{kr-phi-ldg} and the
      approximation property of $\PiDG$ (cf.~\cite[(A.14)]{kr-phi-ldg}) (each for $\psi=\varphi_{p,0}$), we obtain
       \begin{align}
        \label{cor:e5.1}
        \smash{\big\|\bfw_h-\PiDG\bfw_h\big\|_{\nabla,p,h}^p\leq c\, h^p\|\nabla_h^2\bfw_h\|_p^p\,.}
      \end{align}
      Using the discrete Korn inequality \eqref{eq:equi1} in conjunction with  \eqref{cor:e5.1}, we find that
        \begin{align*}
          \|\bfw_h-\bfv_h\|_{\nabla,p,h}^p
          &\leq c\,\big\|\PiDG\bfw_h-\bfv_h\big\|_{\nabla,p,h}^p+c\,\big\|\bfw_h-\PiDG\bfw_h\big\|_{\nabla,p,h}^p
          \\[-0.5mm]
          &\leq c\,\big\|\PiDG\bfw_h-\bfv_h\big\|_{\bfD,p,h}^p+c\, h^p\|\nabla_h^2\bfw_h\|_p^p
          \\[-0.5mm]&\leq c\,\|\bfw_h-\bfv_h\|_{\bfD,p,h}^p+c\,\big\|\bfw_h-\PiDG\bfw_h\big\|_{\nabla,p,h}^p+c\, h^p\|\nabla_h^2\bfw_h\|_p^p 
          \\[-0.5mm]&\leq c\,\|\bfw_h-\bfv_h\|_{\bfD,p,h}^p+c\, h^p\|\nabla_h^2\bfw_h\|_p^p \,.
        \end{align*}
    \end{proof}
	
	For the symmetric DG norm, there holds a similar relation like \eqref{eq:eqiv0}.
	
	\begin{proposition}\label{prop:equivalences}
          For every $p\in (1,\infty)$
          and $k\in \setN$, there exists
          a constant~${c>0}$
          such that for every $\bfw_h\in \WDG$, it holds
		\begin{align}
		\begin{aligned}
		\smash{c^{-1}\,\|\bfw_h\|_{\bfD,p,h}
		\leq \big\|\Dhk\bfw_h\big\|_p
		+h^{\frac{1}{p}}\big\|  h^{-1}\llbracket\bfw_h\otimes\bfn\rrbracket\big\|_{p,\Gamma_h}
		\leq c\,\|\bfw_h\|_{\bfD,p,h}\,.}
		\end{aligned}\label{eq:equi2}
		\end{align}
	\end{proposition}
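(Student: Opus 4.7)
The plan is to mimic the proof of the analogous non-symmetric equivalence \eqref{eq:eqiv0} (cf.\ \cite[(A.26)--(A.28)]{dkrt-ldg}), adapted by taking symmetric parts throughout. The key observation is that \eqref{eq:2.24} and the Riesz representation theorem, together with the fact that for every $\bfX_h \in \smash{X_h^k}$ the average $\{\bfX_h\}$ is paired with the symmetric tensor $\jump{\bfw_h\otimes\bfn}^{\textup{sym}}$ if one takes the symmetric part, yield the pointwise identity
\begin{align*}
    \Dhk\bfw_h = \bfD_h\bfw_h - \boldsymbol{\mathcal{R}}_h^{k,\textup{sym}}\bfw_h\,,
\end{align*}
where $\boldsymbol{\mathcal{R}}_h^{k,\textup{sym}}\bfw_h \coloneqq [\Rhk\bfw_h]^{\textup{sym}} \in X_h^{k,\textup{sym}}$ is the symmetric jump lifting. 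Given this splitting, both inequalities in \eqref{eq:equi2} follow immediately by the triangle inequality, provided we establish the jump lifting estimate
\begin{align}\label{eq:plan-lift}
    \bigl\|\boldsymbol{\mathcal{R}}_h^{k,\textup{sym}}\bfw_h\bigr\|_p \leq c\, h^{\frac{1}{p}}\bigl\|h^{-1}\jump{\bfw_h \otimes \bfn}\bigr\|_{p,\Gamma_h}\,.
\end{align}

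To prove \eqref{eq:plan-lift}, I would use the $L^p$--$L^{p'}$ duality on the finite-dimensional space $X_h^{k,\textup{sym}}$. For any $\bfX_h \in X_h^{k,\textup{sym}}$ with $\|\bfX_h\|_{p'} \leq 1$, the defining relation \eqref{eq:2.25.1} combined with H\"older's inequality on $\Gamma_h$ gives
\begin{align*}
    \bigl(\boldsymbol{\mathcal{R}}_h^{k,\textup{sym}}\bfw_h, \bfX_h\bigr) = \bigl\langle \jump{\bfw_h\otimes\bfn}, \{\bfX_h\}\bigr\rangle_{\Gamma_h} \leq \bigl\|\jump{\bfw_h\otimes\bfn}\bigr\|_{p,\Gamma_h}\,\bigl\|\{\bfX_h\}\bigr\|_{p',\Gamma_h}\,.
\end{align*}
The scaled trace inequality on the polynomial space $X_h^k$ (available on shape-regular meshes, cf.\ \cite[(A.21)]{kr-phi-ldg}) then yields $\|\{\bfX_h\}\|_{p',\Gamma_h} \leq c\, h^{-\frac{1}{p'}}\|\bfX_h\|_{p'}$, and taking the supremum over such $\bfX_h$ delivers \eqref{eq:plan-lift}, since $h^{-\frac{1}{p'}}= h^{\frac{1}{p}-1}$. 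Upper bound: $\|\Dhk\bfw_h\|_p \leq \|\bfD_h\bfw_h\|_p + \|\boldsymbol{\mathcal{R}}_h^{k,\textup{sym}}\bfw_h\|_p \leq c\,\|\bfw_h\|_{\bfD,p,h}$. Lower bound: $\|\bfD_h\bfw_h\|_p \leq \|\Dhk\bfw_h\|_p + \|\boldsymbol{\mathcal{R}}_h^{k,\textup{sym}}\bfw_h\|_p$, and adding $h^{1/p}\|h^{-1}\jump{\bfw_h\otimes\bfn}\|_{p,\Gamma_h}$ to both sides completes the proof.

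The estimate \eqref{eq:plan-lift} is the only non-trivial step; it is essentially the symmetric version of the jump lifting bound already used to prove \eqref{eq:eqiv0} and relies on nothing more than shape-regularity and the scaled trace inequality on polynomial spaces. Consequently, no genuine obstacle is expected and the argument is short.
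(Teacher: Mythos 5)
Your proposal is correct and follows essentially the same route as the proof the paper refers to (Part I, Proposition 2.5): the pointwise identity $\Dhk\bfw_h=\bfD_h\bfw_h-[\Rhk\bfw_h]^{\textup{sym}}$, which is immediate from $\Dhk\coloneqq[\Ghk\cdot]^{\textup{sym}}$ and $\Ghk=\nabla_h-\Rhk$, plus the standard lifting bound $\|[\Rhk\bfw_h]^{\textup{sym}}\|_p\le c\,h^{\frac1p}\|h^{-1}\jump{\bfw_h\otimes\bfn}\|_{p,\Gamma_h}$ and the triangle inequality. The only point you gloss over is that the supremum over discrete symmetric test functions $\bfX_h$ with $\|\bfX_h\|_{p'}\le 1$ yields the full $L^p$-norm of the lifting only after invoking the $L^{p'}$-stability of the local $L^2$-projection onto the discrete space (a standard fact under shape regularity), so this is a minor imprecision rather than a gap.
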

	\begin{proof}
		See \cite[Proposition 2.5]{kr-pnse-ldg-1}.
	\end{proof}
	
\subsubsection{\!\!\!DG \hspace*{-0.15mm}divergence \hspace*{-0.15mm}operator}
	\!\!For \hspace{-0.15mm}every \hspace{-0.15mm}$\bfw_h\!\in\!\WDG$, \hspace{-0.1mm}we \hspace{-0.15mm}denote~\hspace{-0.15mm}by~\hspace{-0.15mm}$\textup{div}_h\bfw_h$ $\coloneqq  \text{tr}(\nabla_h \bfw_h)\in L^p(\Omega)$, the {local divergence}. 
	In addition, for every $k\in \setN_0$,~the~{DG~divergence~operator} 
	$\Divhk\colon\WDG\to L^p(\Omega)$, for every $\bfw_h\in \WDG$, is~defined~via $\smash{\Divhk\bfw_h\coloneqq \text{tr}(\Ghk\bfw_h)
		=\text{tr}(\Dhk\bfw_h)\in Q_h^k}$,~i.e., 
	for every 
	$z_h  \in \smash{Q_h^k}$, we~have~that
	\begin{align*}
	    \smash{\big(\Divhk\bfw_h,z_h\big)
	    =(\textup{div}_h\bfw_h,z_h)
		-\langle \llbracket \bfw_h\cdot\bfn\rrbracket,\{z_h\}\rangle_{\Gamma_h}\,.}
	\end{align*}
    Therefore,  for every $\bfv\in W^{1,p}_0(\Omega)$~and~$z_h\in \Qhkc$, we have that
\begin{align}
  \label{eq:div-dg}
  \begin{aligned}
    \smash{\big(\Divhk \PiDG \bfv,z_h\big)=-( \bfv,\nabla z_h)=(\divo\bfv, z_h)\,.}
\end{aligned}
\end{align}

\section{Fluxes and LDG formulations}\label{sec:ldg}
	
To obtain the LDG formulation of~\eqref{eq:p-navier-stokes}~for
$k \in \setN$, we proceed as in Part I \cite[Sec. 3]{kr-pnse-ldg-1} to get
the discrete counterpart~of~\mbox{Problem}~(Q). 
   Recall that, {restricting ourselves to the case that $q_h\in \Qhkco\coloneqq\Qhko\cap C^0(\overline{\Omega})$},~the~numerical fluxes are, for every stabilization
   parameter $\alpha>0$, defined~via
   \begin{align}
   	\label{def:flux-v1}
   	\flux{\bfv}_{h,\sigma}(\bfv_h) &\coloneqq  
   	\begin{cases}
   		\avg{\bfv_h} &\text{on $\Gamma_h^{i}$}
   		\\
   		\bfv^* &\text{on $\partial\Omega$}
   	\end{cases}\,,
   	\quad \flux{\bfv}_{h,q}(\bfv_h) \coloneqq  
   	\begin{cases}
   		\avg{\bfv_h} &\text{on $\Gamma_h^{i}$}\\
   		\bfv^* &\text{on $\partial\Omega$}
   	\end{cases}\,,\\[-0.25mm]
   	\label{def:flux-q}
   	\flux{q}(q_h) &\coloneqq 
   	q_h 
   	\quad \text{on $\Gamma_h$}\,,\\[-0.25mm]
   	\label{def:flux-S}
   	\flux{\bfS}(\bfv_h, \bfS_h,\bfL_h) &\coloneqq 
   	\avg{\bfS_h} \hspace*{-0.1em}- \hspace*{-0.1em}\alpha\, \SSS_{\smash{\sss}}\big(h^{-1}\jump{(\bfv_h     \hspace*{-0.1em}-\hspace*{-0.1em} \bfv_0^*)\hspace*{-0.1em}\otimes\hspace*{-0.1em} \bfn}\big)
   	\quad \text{on $\Gamma_h$}\,,\\[-0.25mm]
   	\label{def:flux-K}
   	\flux{\bfK}(\bfv_h) &\coloneqq 
   	\avg{\bfK_h} 
   	\quad \text{on $\Gamma_h$}\,,\\[-0.25mm]
   	\label{def:flux-F}
   	\flux{\bfG}(\PiDG\bfG) &\coloneqq  
   	\bigavg{\PiDG \bfG} \quad\text{on $\Gamma_h$}\,,
   \end{align}
   where the operator $\SSS_{\smash{\sss}}$ is defined as in
   \eqref{eq:flux}.  Thus we arrive~at~an~inf-sup stable system
   without using a pressure stabilization. {It is also possible
     to work with a
   discontinuous pressure. In this case one has to modify the fluxes as follows:
   $\flux{q}(q_h) \coloneqq  \avg{q_h} $~on~$\Gamma_h$~and
   $\flux{\bfv}_{h,q}(\bfv_h) \coloneqq 
   \avg{\bfv_h}+h\jump{q_h\bfn}$ on $\Gamma_h^{i}$,
   $\flux{\bfv}_{h,q}(\bfv_h) \coloneqq  \bfv^* $ on
   $\partial\Omega$.}

    As in Part I of the paper (cf.~\cite{kr-pnse-ldg-1}), we arrive at the {flux
      formulation} of~\eqref{eq:p-navier-stokes}, which reads:
    For given $\bfg\in \smash{L^{p'}(\Omega)} $, find
    $(\bfL_h,\bfS_h,\bfK_h,\bfv_h,q_h)^\top \in \Xhk \times
    \Xhk\times\Xhk\times\Vhk \times \Qhkco$ such that for all
    $(\bfX_h,\bfY_h,\bfZ_h,\bfz_h,z_h)^\top$
    $ \in \Xhk \times \Xhk\times\Xhk\times\Vhk \times \Qhkc$, it holds
    \begin{align}
        \hskp{\bfL_h}{\bfX_h} &= \bighskp{\Ghk \bfv_h }{
          \bfX_h}\,,  \notag
        \\
        \hskp{\bfS_h}{\bfY_h} &= \hskp{\SSS(\bfL_h^{\textup{sym}})}{
          \bfY_h}\,, \notag
           \\
       \label{eq:DG} \hskp{\bfK_h}{\bfZ_h} &= \hskp{\bfv_h\otimes \bfv_h}{
          \bfZ_h}\,, 
        \\[-0.5mm]
        \bighskp{\bfS_h-\tfrac{1}{2}\bfK_h-q_h\mathbf{I}_d}{\Dhk \bfz_h} &=
        \hskp{\bfg-\tfrac{1}{2}\bfL_h\,\bfv_h}{\bfz_h} \notag
        \\[-0.5mm]
        &\quad - \alpha \,\bigskp{\SSS_{\smash{\avg{\abs{\Pia\bfL_h^{\textup{sym}}}}}}(h^{-1} \jump{\bfv_h \otimes
            \bfn})}{ \jump{\bfz_h \otimes \bfn}}_{\Gamma_h}\,, \notag\\[-1.5mm]
        \bighskp{\Divhk \bfv_h}{z_h}&=0\,.  \notag
    \end{align}  
    
    Now, we eliminate in the system~\eqref{eq:DG} the
    variables $ \bfL_h\hspace{-0.15em}\in\hspace{-0.15em} \Xhk$, $ \bfS_h\hspace{-0.15em}\in\hspace{-0.15em}     \Xhk$~and~${\bfK_h\hspace{-0.15em}\in\hspace{-0.15em} \Xhk}$ to derive a system only expressed in
    terms of the two variables ${\bfv_h\in\Vhk}$~and~${q_h\in
      \Qhkco}$. {To this end, we observe that it follows from \eqref{eq:DG}$_{1,2,3}$ that
      $\bfL_h= \Ghk \bfv_h + \Rhk\bfv^*$, $\bfL_h^{\textup{sym}}=\Dhk\bfv_h$,  $\bfS_h=\PiDG\SSS(\bfL_h^{\textup{sym}})$, $\bfK_h= \PiDG
      (\bfv_h \otimes \bfv_h)$.
    If we insert this into \eqref{eq:DG}$_4$,
    we get the discrete counterpart~of~Problem~(Q):}
    
    \textbf{Problem (Q$_h$).} For given $\bfg\in L^{p'}(\Omega)$, find $(\bfv_h,q_h)^\top\!\in\! \Vhk\times \Qhkco$ such~that~for~all $ (\bfz_h,z_h)^\top \in \Vhk\times\Qhkco$, it holds
    \begin{align}
        \bighskp{\SSS(\Dhk \bfv_h)-\tfrac{1}{2}\bfv_h\otimes \bfv_h-q_h\mathbf{I}_d}{\Dhk
          \bfz_h}\label{eq:primal1}
        &=     \bighskp{\bfg-\tfrac{1}{2}[\Ghk \bfv_h]\bfv_h}{\bfz_h}
        \\[-0.5mm]
        &\quad - \alpha \big\langle\SSS_{\smash{\sssl}}(h^{-1} \jump{\bfv_h\otimes\notag
            \bfn}), \jump{\bfz_h \otimes \bfn}\big\rangle_{\Gamma_h}\!,\\[-1.5mm]
      \bighskp{\Divhk \bfv_h}{z_h}&=0 \notag\,.
    \end{align}   
    Next, we eliminate in the system~\eqref{eq:primal1}, the
    variable $q_h\!\in\! \Qhkco$ to derive~a~system~only~expressed in terms of the single variable $\bfv_h\in \Vhk$. To this end, we introduce the space
    \begin{align*}
        V_h^k(0)\coloneqq \big\{\bfv_h\in V_h^k\mid \bighskp{\Divhk \bfv_h}{z_h}=0\textup{ for all }z_h\in     \Qhkc\big\}\,.
    \end{align*}
    Consequently, since $\hskp{z_h\mathbf{I}_d}{\Dhk
          \bfz_h}=\hskp{z_h}{\Divhk
          \bfz_h}=0$ for all $z_h\in \Qhkc$ and $\bfz_h\in V_h^k(0)$,
    we~get~the~discrete counterpart of Problem (P):
    
    \textbf{Problem (P$_h$).} For given $\bfg\!\in\! L^{p'}(\Omega)$, find $\bfv_h\!\in\! V_h^k(0)$ such that~for~all~${\bfz_h\!\in\! V_h^k(0)}$, it holds
    \begin{align} 
          \bighskp{\SSS(\Dhk \bfv_h)-\tfrac{1}{2}\bfv_h\otimes \bfv_h}{\Dhk
          \bfz_h}&=     \bighskp{\bfg-\tfrac{1}{2}[\Ghk \bfv_h]\bfv_h}{\bfz_h}\label{eq:primal2}\\[-0.5mm]
       &\quad - \alpha \big\langle\SSS_{\smash{\sssl}}(h^{-1} \jump{\bfv_h\otimes
            \bfn}), \jump{\bfz_h \otimes \bfn}\big\rangle_{\Gamma_h}\,.\notag
    \end{align}
    Problem (Q$_h$) and Problem (P$_h$) are called {primal formulations} of
    the system \eqref{eq:p-navier-stokes}. 

    Well-posedness (i.e., solvability), stability (i.e., a priori
    estimates), and (weak) convergence of Problem (Q$_h$) and Problem
    (P$_h$) are proved in Part~I~of the paper~(cf.~\cite{kr-pnse-ldg-1}).
    
\section{Convergence rates}\label{sec:rates}

Let us start with the main result of this paper:
\begin{theorem}
  \label{thm:error}
  Let $\SSS$ satisfy Assumption~\ref{assum:extra_stress} with
  $p\in(2,\infty)$ and $\delta> 0$,~let~$k\in \mathbb{N}$, and let
  $\bfg\in L^{p'}(\Omega)$. Moreover, let
  $(\bfv,q)^\top \in \Vo(0)\times \Qo$ be a solution of
  Problem (Q) (cf.~\eqref{eq:q1}, \eqref{eq:q2}) with
  $\bfF(\bfD \bfv) \in W^{1,2}(\Omega)$
  and let $(\bfv_h,q_h)^\top \in \Vhk(0)\times \Qhkco$ be a solution
  of Problem (Q$_h$) (cf.~\eqref{eq:primal1}) for $\alpha>0$.   Then,
  there exists a constant $c_0 >0$, depending only on the characteristics of
  $\SSS$, $\delta^{-1}>0$, the chunkiness $\omega_0>0$, 
  $\alpha^{-1}>0$, and $k\in \mathbb{N}$, such that if $\norm{\nabla
    \bfv}_2\le c_0$, then, it holds
  \begin{align*}
    \bignorm{\smash{\bfF\big(\Dhk \bfv_h\big)}\!-\! \bfF(\bfD
      \bfv)}_2^2 \!+\!
      m_{\phi_{\smash{\sssl}},h } (\bfv_h\!-\!\bfv)\leq c\, h^2 \norm{
    \bfF(\bfD \bfv) }_{1,2}^2\!+\!c\,\rho_{(\phi_{\abs{\bfD \bfv}})^*,\Omega}(h\nabla q)
  \end{align*}
  with a constant $c\hspace{-0.1em}>\hspace{-0.1em}0$ depending only on the characteristics of
  $\SSS$, $\delta^{-1}\hspace{-0.1em}>\hspace{-0.1em}0$,~the~\mbox{chunkiness} $\omega_0>0$, 
  $\alpha^{-1}>0$,  $k\in \mathbb{N}$, $\norm{\bfv
  }_{\infty}\ge 0$,  and $c_0>0$.
\end{theorem}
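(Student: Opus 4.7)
The strategy is to work with the auxiliary discrete error
$\bfe_h := \PiDG \bfv - \bfv_h$ and exploit that, by \eqref{eq:div-dg} applied to the divergence-free field $\bfv$, one has $(\Divhk \PiDG\bfv,z_h)=(\divo\bfv,z_h)=0$ for all $z_h\in\Qhkc$, so that $\PiDG\bfv\in\Vhk(0)$ and hence $\bfe_h\in\Vhk(0)$ is an admissible test function in Problem (P$_h$). I would then derive an error identity by subtracting \eqref{eq:primal2} from the continuous equation \eqref{eq:q1} tested against $\bfe_h$. To bring the continuous problem into DG form, I would integrate by parts element-wise using \eqref{eq:2.24}, use that $\jump{\bfv\otimes\bfn}=0$ (since $\bfv\in W^{1,p}_0(\Omega)$ is continuous across faces), and for the pressure insert a Scott--Zhang/Clément interpolate $I_h^c q\in\Qhkc$ (for which $(I_h^c q,\Divhk\bfe_h)=0$ because $\bfe_h\in\Vhk(0)$), so the pressure contribution reduces to a term of the form $(q-I_h^c q,\divo_h\bfe_h)$ plus face terms that vanish since $q$ is single-valued on $\Gamma_h^i$.

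After reorganisation, the error identity reads $I+J=R_S+R_K+R_q$, with
\begin{align*}
I&:=\bighskp{\SSS(\Dhk\bfv_h)-\SSS(\Dhk\PiDG\bfv)}{\Dhk\bfe_h},\\
J&:=\alpha\bigl\langle \SSS_{\sss}(h^{-1}\jump{\bfv_h\otimes\bfn}),\jump{\bfe_h\otimes\bfn}\bigr\rangle_{\Gamma_h},
\end{align*}
and $R_S,R_K,R_q$ the consistency errors from the viscous, convective, and pressure terms, respectively. Proposition \ref{lem:hammer} and Remark \ref{rem:sa} yield the lower bound
$I+J\gtrsim \|\bfF(\Dhk\bfv_h)-\bfF(\Dhk\PiDG\bfv)\|_2^2+m_{\phi_{\sss},h}(\bfe_h)$.
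For $R_S$ and $R_q$, I would use the $\varepsilon$-Young inequality \eqref{ineq:young} for the shifted N-function $\phi_{|\bfD\bfv|}$ together with Lemma \ref{lem:shift-change} for the change of shift, and then invoke the $\bfF$-approximation and trace-type bounds for $\PiDG$ from \cite{dkrt-ldg,kr-phi-ldg}; this produces, up to terms absorbable into the left-hand side, the bound $h^2\|\bfF(\bfD\bfv)\|_{1,2}^2+\rho_{(\phi_{|\bfD\bfv|})^*,\Omega}(h\nabla q)$.

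The main obstacle is the convective consistency term $R_K$, which arises from reconciling $([\nabla\bfv]\bfv,\bfe_h)$ with the discrete skew-symmetric form built from $\tfrac12\bfv_h\otimes\bfv_h$ and $\tfrac12[\Ghk\bfv_h]\bfv_h$. Expanding in terms of $\bfv_h-\bfv$ and $\PiDG\bfv-\bfv$ produces bulk contributions of the type $([\nabla(\bfv_h-\bfv)]\bfv,\bfe_h)$ and $([\nabla\bfv](\bfv_h-\bfv),\bfe_h)$ together with jump contributions controlled by $m_{\phi_{\sss},h}$. Using the embedding $\bfv\in L^\infty(\Omega)$ from Lemma \ref{lem:pres} (which makes the convective nonlinearity well-behaved), a discrete Korn inequality (Proposition \ref{korn}), and H\"older in the $\phi$/$\phi^*$ duality, each such term is estimated by $\|\nabla\bfv\|_2$ times the left-hand side plus an $h^2\|\bfF(\bfD\bfv)\|_{1,2}^2$ interpolation remainder; the smallness assumption $\|\nabla\bfv\|_2\le c_0$ is precisely what lets the $\|\nabla\bfv\|_2$ prefactor be absorbed into the left-hand side.

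Finally, combining the resulting energy estimate for $\|\bfF(\Dhk\bfv_h)-\bfF(\Dhk\PiDG\bfv)\|_2^2+m_{\phi_{\sss},h}(\bfe_h)$ with the Orlicz $\bfF$-approximation bound $\|\bfF(\Dhk\PiDG\bfv)-\bfF(\bfD\bfv)\|_2^2\lesssim h^2\|\bfF(\bfD\bfv)\|_{1,2}^2$ and the analogous bound for the jump modular (using $\jump{\bfv\otimes\bfn}=0$ and a trace estimate for $\bfv-\PiDG\bfv$) via the triangle inequality delivers the claim. The technical heart is juggling the shifted N-functions $\phi_{\sss}$, $\phi_{|\bfD\bfv|}$ and $\phi_{|\Dhk\PiDG\bfv|}$ consistently when passing between the discrete and continuous shifts, which is done via Lemma \ref{lem:shift-change} with small parameter $\varepsilon>0$.
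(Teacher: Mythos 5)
Your route is essentially the paper's: you test with the discrete error $\bfv_h-\PiDG\bfv\in\Vhk(0)$ (the paper writes this as $\PiDG\bfe_h$ with $\bfe_h=\bfv_h-\bfv$), derive an error identity by element-wise integration by parts of the regular solution (the paper's \eqref{eq:cont}/\eqref{eq:errorprimal}, whose validity rests on Lemma \ref{lem:pres} — this should be said explicitly, since it is also what makes $\rho_{(\phi_{\abs{\bfD\bfv}})^*,\Omega}(h\nabla q)$ finite), insert a Clément interpolate of $q$, estimate the viscous, convective and pressure consistency terms via $\varepsilon$-Young for shifted N-functions, shift changes and the Orlicz approximation/trace estimates for $\PiDG$, and absorb the convective contributions using $\bfv\in L^\infty(\Omega)$, the Korn-type inequality and the smallness of $\norm{\nabla\bfv}_2$. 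However, two steps are not correct as written. First, the pressure face terms do \emph{not} vanish: single-valuedness of $q$ only kills the contribution $\langle\jump{q},\avg{\bfe_h\cdot\bfn}\rangle_{\Gamma_h^i}$; after inserting $I_h^c q$ you are still left with $\langle\avg{q-I_h^c q},\jump{\bfe_h\cdot\bfn}\rangle_{\Gamma_h}$ (the analogue of the paper's term $J_5$ with $\bigavg{\PiDG(q\mathbf{I}_d)}-\avg{q\mathbf{I}_d}$), which needs its own nontrivial estimate (Young with $(\phi_{\smash{\sss}})^*$, shift changes, trace estimate, Lemma \ref{lem:e5}, cf.~\eqref{thm:error.15}) and contributes to $\rho_{(\phi_{\abs{\bfD\bfv}})^*,\Omega}(h\nabla q)+h^2\norm{\nabla\bfF(\bfD\bfv)}_2^2$; dropping it leaves a genuine hole even though the final rate is unaffected. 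Second, the lower bound $I+J\gtrsim\norm{\bfF(\Dhk\bfv_h)-\bfF(\Dhk\PiDG\bfv)}_2^2+m_{\phi_{\smash{\sss}},h}(\bfe_h)$ is not available as stated: the stabilization is evaluated at $\jump{\bfv_h\otimes\bfn}$, not at $\jump{\bfe_h\otimes\bfn}$, so $\SSS_{\smash{\sss}}(h^{-1}\jump{\bfv_h\otimes\bfn}):\jump{\bfe_h\otimes\bfn}$ has no sign. You must split $\jump{\bfe_h\otimes\bfn}=\jump{\bfv_h\otimes\bfn}-\jump{(\PiDG\bfv-\bfv)\otimes\bfn}$ (using $\jump{\bfv\otimes\bfn}=\mathbf{0}$), keep the coercive part $\langle\SSS_{\smash{\sss}}(h^{-1}\jump{\bfv_h\otimes\bfn}),\jump{\bfv_h\otimes\bfn}\rangle_{\Gamma_h}\gtrsim m_{\phi_{\smash{\sss}},h}(\bfv_h-\bfv)$, and estimate the remainder exactly as the paper's term $I_4$ in \eqref{thm:error.3} (Young, shift change, Lemma \ref{lem:e5}, Corollary \ref{cor:PiDGapproxmspecial}).

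In addition, make explicit where $p>2$ and $\delta>0$ enter: the convective error terms are naturally controlled by $\norm{\bfe_h}_{\bfD,2,h}^2$, and absorbing them into the left-hand side requires the conversion \eqref{eq:2toF}, i.e., $\norm{\bfe_h}_{\bfD,2,h}^2\le c\,\delta^{2-p}\big(\norm{\bfF(\bfD\bfv)-\bfF(\Dhk\bfv_h)}_2^2+m_{\phi_{\smash{\sss}},h}(\bfv_h-\bfv)\big)$; your phrase ``estimated by $\norm{\nabla\bfv}_2$ times the left-hand side'' silently uses this, and it is the sole source of the $\delta^{-1}$-dependence of $c_0$ and $c$ in the theorem. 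With these repairs your plan coincides with the paper's proof in all essential points.
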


\begin{corollary}\label{cor:error}
  Let the assumptions of Theorem \ref{thm:error} be satisfied. Then,~it~holds
  \begin{align*}
    \bignorm{\smash{\bfF\big(\Dhk \bfv_h\big)} - \bfF(\bfD
      \bfv)}_2^2 +
      \,m_{\phi_{\smash{\sssl}},h } (\bfv_h-\bfv)\leq  c\, h^2 \norm{\bfF(\bfD \bfv) }_{1,2}^2+c\,h^{p'}\!\rho_{\phi^*,\Omega}(\nabla q)
  \end{align*}
  with a constant $c>0$ depending only on the characteristics of
  $\SSS$, $\delta^{-1}>0$,~the~chunkiness $\omega_0>0$,
  $\alpha^{-1}>0$, $k\in \mathbb{N}$, $\norm{\bfv }_{\infty}\ge 0$,
  and $c_0>0$. If, in addition, $\bfg\in L^{2}(\Omega)$,~then
  \begin{align*}
    &\bignorm{\smash{\bfF\big(\Dhk \bfv_h\big) - \bfF(\bfD
      \bfv)}}_2^2 +m_{\phi_{\smash{\sssl}},h } (\bfv_h-\bfv)
    \\[1.5mm]
    &\leq  c\, h^2 \smash{\big ( \norm{\bfF(\bfD \bfv)
      }_{1,2}^2+\big\|(\delta+\abs{\bfD\bfv})^{\smash{\frac{2-p}{2}}}\nabla q\big\|_2^2
      \big )
      }
  \end{align*}
  with a constant $c>0$ depending only on the characteristics of
  $\SSS$, $\delta^{-1}>0$,~the~chunkiness~${\omega_0>0}$, 
  $\alpha^{-1}>0$,  $k\in \mathbb{N}$, $\norm{\bfv
  }_{\infty}\ge 0$,  and $c_0>0$.
\end{corollary}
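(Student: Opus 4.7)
The plan is to reduce both assertions to Theorem \ref{thm:error} by controlling the pressure modular $\rho_{(\phi_{\abs{\bfD\bfv}})^*, \Omega}(h \nabla q)$ in two different ways, relying only on elementary pointwise estimates for the shifted conjugate N-function $(\phi_a)^*$ and its relation to $\phi^*$.

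For the first assertion I would establish two universal comparisons, valid for $\phi$ with $(p,\delta)$-structure, $p > 2$, arbitrary $a, t \ge 0$, and $h \in (0,1]$: first $(\phi_a)^*(t) \le c\, \phi^*(t)$, and second $\phi^*(h t) \le c\, h^{p'} \phi^*(t)$. Both follow from the explicit two-regime behaviour $(\phi_a)^*(t) \sim t^2/(\delta + a)^{p-2}$ for $t \le (\delta + a)^{p-1}$ and $(\phi_a)^*(t) \sim t^{p'}$ otherwise (a standard computation for N-functions with $(p,\delta)$-structure, obtained by inverting $\phi_a'(s) \sim (\delta+a+s)^{p-2}s$). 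For the first bound, in the small-$t$ regime one uses $(\delta+a)^{p-2} \ge \delta^{p-2}$ (this is where $p > 2$ enters); in the large-$t$ regime both sides scale like $t^{p'}$; the intermediate window $\delta^{p-1} \le t \le (\delta+a)^{p-1}$, in which $\phi^*$ has already entered its $t^{p'}$ branch while $(\phi_a)^*$ is still quadratic, closes through the algebraic identity $(p-1)(2-p') = p-2$. For the second bound one exploits $p' \le 2$ together with $h \le 1$, which yields $h^2 \le h^{p'}$ in the quadratic branch and trivial scaling in the $t^{p'}$ branch. Composing these two inequalities and integrating gives
\begin{align*}
\rho_{(\phi_{\abs{\bfD\bfv}})^*,\Omega}(h \nabla q) \le c \int_\Omega \phi^*(h\abs{\nabla q})\,\textup{d}x \le c\, h^{p'} \rho_{\phi^*,\Omega}(\nabla q),
\end{align*}
so plugging this into Theorem \ref{thm:error} yields the first claim.

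For the second assertion, the extra hypothesis $\bfg \in L^2(\Omega)$ activates Lemma \ref{lem:pres}, which guarantees $(\delta+\abs{\bfD\bfv})^{(2-p)/2} \nabla q \in L^2(\Omega)$. Here I would employ the sharper universal pointwise bound
\begin{align*}
(\phi_a)^*(t) \le \frac{c\, t^2}{(\delta + a)^{p-2}} \qquad \text{for all } a, t \ge 0,
\end{align*}
which follows in one line from $\phi_a(s) \ge \tfrac{1}{2} (\delta + a)^{p-2} s^2$ (valid since $p \ge 2$, because the integrand $(\delta+a+r)^{p-2}$ is nondecreasing in $r$) by passing to the Legendre transform. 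Applying this pointwise with $a = \abs{\bfD\bfv(x)}$ and $t = h\abs{\nabla q(x)}$ yields
\begin{align*}
\rho_{(\phi_{\abs{\bfD\bfv}})^*,\Omega}(h \nabla q) \le c\, h^2 \int_\Omega (\delta + \abs{\bfD\bfv})^{2-p} \abs{\nabla q}^2 \,\textup{d}x = c\, h^2\, \bignorm{(\delta+\abs{\bfD\bfv})^{\frac{2-p}{2}} \nabla q}_2^2,
\end{align*}
which, inserted into Theorem \ref{thm:error}, gives the second claim.

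The main point of care -- rather than a genuine obstacle -- is the verification of the first comparison $(\phi_a)^*(t) \le c\, \phi^*(t)$ in the intermediate window, where $\phi^*$ is already in its $t^{p'}$ phase but $(\phi_a)^*$ is still quadratic; the identity $(p-1)(2-p') = p-2$ is the only place where the hypothesis $p > 2$ is essentially used in this reduction step.
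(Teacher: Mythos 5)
Your proposal is correct and follows essentially the same route as the paper: both assertions are reduced to Theorem \ref{thm:error} via the pointwise bounds $(\phi_a)^*(h t)\le c\,\phi^*(h t)\le c\,h^{p'}\phi^*(t)$ (which the paper simply cites from \cite{bdr-phi-stokes}) and $(\phi_a)^*(h t)\le c\,(\delta+a)^{2-p}h^2t^2$ with $a=\abs{\bfD\bfv}$, the latter finiteness being supplied by Lemma \ref{lem:pres} under $\bfg\in L^2(\Omega)$. Your derivations of these inequalities (two-regime description of $(\phi_a)^*$, resp.\ $\phi_a(s)\ge\tfrac12(\delta+a)^{p-2}s^2$ plus Legendre duality) are just self-contained versions of the estimates the paper invokes, so the argument matches the paper's proof in substance.
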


\begin{remark}
(i)  Note that in Theorem \ref{thm:error}, the assumption $\bfg\in L^{p'}(\Omega)$ is equivalent to $\nabla q\in L^{p'}(\Omega)$ and in Corollary \ref{cor:error},  the assumption $\bfg\in L^2(\Omega)$ can be replaced by $(\delta+\abs{\bfD\bfv})^{2-p}\vert \nabla q\vert^2\in L^1(\Omega)$.

{(ii) To show that $\bfF(\bfD \bfv) \in
W^{1,2}(\Omega)$ holds under reasonable assumptions is still an open problem
(cf.~\cite{hugo-petr-rose} for partial results). However, this
regularity is natural for elliptic problems of $p$-Laplace typ
(cf.~\cite{gia-mod-86,giu1}) and proved in the two-dimensional case
(cf.~\cite{KMS2}) and in any dimension in the space periodic setting,
since it follows from interior regularity (cf.~\cite{hugo-petr-rose}).}
\end{remark}

\begin{corollary}
  \label{cor:error_F*}
 Let the assumptions of Theorem \ref{thm:error} be satisfied.~Then,~it~holds
  \begin{align*}
      \norm{\smash{\bfF^*(\PiDG\SSS(\Dhk\bfv_h))-\bfF^*(\SSS(\bfD \bfv))}}_2^2 \le c\,\norm{\smash{\bfF(\Dhk\bfv_h) - \bfF(\bfD \bfv)}}_2^2 +c\,  h^2\, \norm{\nabla \bfF(\bfD \bfv) }_2^2
  \end{align*}
  with a constant $c>0$ depending only on the characteristics of
  $\SSS$, the~chunkiness~${\omega_0>0}$,  and  $k\in \mathbb{N}$.
\end{corollary}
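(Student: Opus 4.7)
The plan is to split the left-hand side via the triangle inequality, interpret the $\bfF^*$-differences through the equivalences of Proposition~\ref{lem:hammer}, and then exploit the approximation and $L^2$-stability properties of the projection $\PiDG$. Writing
\begin{align*}
\bignorm{\bfF^*(\PiDG\SSS(\Dhk\bfv_h))-\bfF^*(\SSS(\bfD\bfv))}_2 &\leq \bignorm{\bfF^*(\PiDG\SSS(\Dhk\bfv_h))-\bfF^*(\SSS(\Dhk\bfv_h))}_2 \\
&\quad + \bignorm{\bfF^*(\SSS(\Dhk\bfv_h))-\bfF^*(\SSS(\bfD\bfv))}_2\,,
\end{align*}
the second summand is immediately controlled by $c\,\|\bfF(\Dhk\bfv_h)-\bfF(\bfD\bfv)\|_2$ via \eqref{eq:F-F*3}, yielding the first contribution in the claimed bound.

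For the commutator term $\|\bfF^*(\PiDG\SSS(\Dhk\bfv_h))-\bfF^*(\SSS(\Dhk\bfv_h))\|_2^2$, I would invoke \eqref{eq:hammerf} (using that $\PiDG$ preserves symmetry componentwise, since $\SSS$ is $\mathbb{R}^{d\times d}_{\textup{sym}}$-valued) to rewrite it as $\int_\Omega (\phi^*)_{|\SSS(\Dhk\bfv_h)|}(|(\PiDG-I)\SSS(\Dhk\bfv_h)|)\,\mathrm{d}x$. Then I would insert $\SSS(\bfD\bfv)$ through the decomposition
\begin{align*}
(\PiDG-I)\SSS(\Dhk\bfv_h)=(\PiDG-I)(\SSS(\Dhk\bfv_h)-\SSS(\bfD\bfv))+(\PiDG-I)\SSS(\bfD\bfv)\,,
\end{align*}
and use convexity together with the $\Delta_2$-condition to split the shifted modular into two corresponding pieces.

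The piece built from $(\PiDG-I)(\SSS(\Dhk\bfv_h)-\SSS(\bfD\bfv))$ is the straightforward one: a change-of-shift on each simplex $K\in\mathcal{T}_h$ via Lemma~\ref{lem:shift-change} freezes the shift to a constant, after which the $L^2$-stability of $\PiDG$ and the fourth equivalence in \eqref{eq:hammera} reduce it to $c\,\|\bfF(\Dhk\bfv_h)-\bfF(\bfD\bfv)\|_2^2$, which is again absorbed into the right-hand side of the asserted estimate.

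The main obstacle is the second piece, namely $\int_\Omega (\phi^*)_{|\SSS(\Dhk\bfv_h)|}(|(\PiDG-I)\SSS(\bfD\bfv)|)\,\mathrm{d}x$. Here I would first switch the shift from $|\SSS(\Dhk\bfv_h)|$ to $|\SSS(\bfD\bfv)|$ by Lemma~\ref{lem:shift-change} (the resulting $\varepsilon$-error of the form $|\bfF(\Dhk\bfv_h)-\bfF(\bfD\bfv)|^2$ is already controlled). What remains is a weighted element-wise approximation estimate: using that $(\phi^*)_{|\SSS(\bfD\bfv)|}(t)\sim (\delta+|\bfD\bfv|)^{2-p}\,t^2$ for $t$ small, the local $L^2$-approximation property of $\PiDG$, and the identity $|\nabla\SSS(\bfD\bfv)|^2 \sim (\delta+|\bfD\bfv|)^{p-2}|\nabla\bfF(\bfD\bfv)|^2$ from \cite[Proposition~2.4]{br-plasticity} (already used in the proof of Lemma~\ref{lem:pres}), one arrives at $c\,h^2\|\nabla\bfF(\bfD\bfv)\|_2^2$. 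The delicate point is that the shift $|\SSS(\bfD\bfv)|$ is genuinely space-dependent, so the freezing of the shift and the local approximation estimate must be synchronized carefully on each simplex to avoid losing the correct $h^2$-rate.
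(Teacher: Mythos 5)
Your decomposition reproduces the easy half of the argument: the paper also reduces the term $\smash{\norm{\bfF^*(\SSS(\Dhk\bfv_h))-\bfF^*(\SSS(\bfD\bfv))}_2^2}$ to $\smash{\norm{\bfF(\Dhk\bfv_h)-\bfF(\bfD\bfv)}_2^2}$ via \eqref{eq:F-F*3}. The genuine gap is in the remaining projection-error term, which is exactly where the work lies and which you leave unresolved. Your proposed mechanism for
$\rho_{(\phi_{\abs{\bfD\bfv}})^*,\Omega}\big(\SSS(\bfD\bfv)-\PiDG\SSS(\bfD\bfv)\big)$
--- bound the shifted conjugate by the weight $(\delta+\abs{\bfD\bfv})^{2-p}$ times $t^2$, apply the plain local $L^2$-approximation of $\PiDG$, and convert $\abs{\nabla\SSS(\bfD\bfv)}^2$ back via the identity from \cite{br-plasticity} --- does not close: the weight is genuinely space-dependent, it does not commute with $\PiDG$, and its elementwise oscillation is not controlled by the available regularity $\bfF(\bfD\bfv)\in W^{1,2}(\Omega)$, so pulling it out of the $L^2$-approximation estimate on each $K$ is not justified; this is precisely the ``synchronization'' you flag and then defer. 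The same issue appears in your first piece: Lemma \ref{lem:shift-change} lets you freeze the shift, but the freezing cost is only harmless if the frozen constant is keyed to the \emph{exact} solution (e.g.\ $\abs{\SSS(\Pia\bfD\bfv)}$, with cost $\abs{\bfF(\bfD\bfv)-\bfF(\Pia\bfD\bfv)}^2$ controlled by Lemma \ref{lem:poin_F}); freezing to a quantity keyed to $\bfv_h$ would produce $h^2\norm{\nabla_h\bfF(\Dhk\bfv_h)}_2^2$, which is not available. Moreover, after freezing, what is needed is the Orlicz-stability of $\PiDG$ in the shifted modular (cf.\ \cite[Lemma A.5]{kr-phi-ldg}, \cite[Lemma 4.4]{dkrt-ldg}), not mere $L^2$-stability, since for $p>2$ the frozen shifted conjugate is only quadratic near the shift.

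The paper avoids all of this in the proof of Corollary \ref{cor:error_F*} by a two-line argument: it applies the second estimate of Lemma \ref{lem:e7} with $\bfY=\Dhk\bfv_h$ and $j=k$, and then \eqref{eq:F-F*3}. The point of Lemma \ref{lem:e7} is that the intermediate object is $\bfF^*(\SSS(\Pia\bfD\bfv))$ (piecewise constant and keyed to $\bfv$), so that $\Uppi_h^k$ reproduces it, the stability of $\Uppi_h^k$ is used with a constant shift per element, and every shift change costs only $\abs{\bfF(\bfD\bfv)-\bfF(\Pia\bfD\bfv)}^2$, i.e.\ $h^2\norm{\nabla\bfF(\bfD\bfv)}_2^2$ by Lemma \ref{lem:poin_F} --- the same mechanism as in the estimate \eqref{eq:e11}. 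If you incorporate this insertion of $\SSS(\Pia\bfD\bfv)$ (rather than your intermediate point $\bfF^*(\SSS(\Dhk\bfv_h))$ followed by an unweighted approximation estimate), your argument becomes a slightly rearranged version of the paper's; as written, the crucial $h^2$-term is asserted but not proved.
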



In order to prove these results, we need to derive a system similar~to~\eqref{eq:primal1},~which~is satisfied by a solution
of~our~\mbox{original}~\mbox{problem}~\eqref{eq:p-navier-stokes}. Moreover,
to treat the terms coming from the extra stress tensor, which depends on the symmetric
part~of~the~gradient, we need to generalize several auxiliary results,
which are proved in \cite{kr-phi-ldg} for the case of a full gradient,
to the case of symmetric gradients. This is possible due to the local
character of the $L^2$-projection and the local 
Korn inequality \cite[Theorem~6.13]{john}.

Let us first find a system similar to
\eqref{eq:primal1}, which is satisfied by a solution
of~\mbox{problem}~\eqref{eq:p-navier-stokes}.
Using the notation ${\bfL=\nabla\bfv}$,
${\bfS =\SSS(\bfL^{\textup{sym}})}$, ${\bfK =\bfv\otimes \bfv}$,
we~find~that
$(\bfv, \bfL, \bfS,\bfK)^\top \in W^{1,p}(\Omega)\times
L^p(\Omega)\times L^{p'}(\Omega)\times L^{p'}(\Omega)$. If, in
addition,~$\bfS,\bfK, q \in
W^{1,1}(\Omega)$, we observe as in \cite{dkrt-ldg}, i.e., using
integration-by-parts, the
projection~properties~of~$\PiDG$, the definition of the discrete
gradient, and of the jump~functional,~that
\begin{align}\label{eq:cont}
  \begin{aligned}
    \hskp{\bfL}{\bfX_h} &= \hskp{\nabla\bfv}{ \bfX_h}\,,
    \\
    \hskp{\bfS}{\bfY_h} &= \bighskp{\SSS(\bfL^{\textup{sym}})}{ \bfY_h}\,,
    \\
    \hskp{\bfK}{\bfZ_h} &= \bighskp{\bfv\otimes \bfv}{ \bfZ_h}\,,
    \\
    \bighskp{\bfS-\tfrac{1}{2}\bfK-q\mathbf{I}_d}{\Dhk \bfz_h} &= \hskp{\bfg-\tfrac{1}{2}\bfL\bfv}{\bfz_h} +
      \bigskp{\avg{\bfS}-\bigavg{\PiDG\bfS}}{\jump{\bfz_h\otimes
          \bfn}}_{\Gamma_h}\\&\quad+
          \tfrac{1}{2}\bigskp{\bigavg{\PiDG\bfK}-\avg{\bfK}}{\jump{\bfz_h\otimes
          \bfn}}_{\Gamma_h}
          \\&\quad+
          \bigskp{\bigavg{\PiDG(q\mathbf{I}_d)}-\avg{q\mathbf{I}_d}}{\jump{\bfz_h\otimes
          \bfn}}_{\Gamma_h}\,,
    \\
    \hskp{\Divhk\bfv}{\bfz_h} &= 0
            \end{aligned}
\end{align}  
is satisfied for all $(\bfX_h,\bfY_h,\bfZ_h,\bfz_h)^\top \in \Xhk \times \Xhk\times \Xhk \times \Vhk$.
As a result, using \eqref{eq:cont}, \eqref{eq:primal1} and
\eqref{eq:div-dg}, we~arrive~at
\begin{align}
     &\bighskp{\SSS(\Dhk \bfv_h) - \SSS(\bfD
      \bfv)}{\Dhk \bfz_h}
    + \alpha \big\langle\SSS_{\smash{\sssl}}(h^{-1} \jump{\bfv_h\otimes
        \bfn}), \jump{\bfz_h \otimes \bfn}\big\rangle_{\Gamma_h}\label{eq:errorprimal}\\
    &=\hskp{z_h-q}{\Divhk \bfz_h}+b_h(\bfv,\bfv,\bfz_h)-b_h(\bfv_h,\bfv_h,\bfz_h)\notag
    +\bigskp{\avg{\bfS}-\bigavg{\PiDG\bfS}}{\jump{\bfz_h\otimes
          \bfn}}_{\Gamma_h}
          \\&\quad+
          \tfrac{1}{2}\bigskp{\bigavg{\PiDG\bfK}-\avg{\bfK}}{\jump{\bfz_h\otimes
          \bfn}}_{\Gamma_h}\notag +
          \bigskp{\bigavg{\PiDG(q\mathbf{I}_d)}-\avg{q\mathbf{I}_d}}{\jump{\bfz_h\otimes
          \bfn}}_{\Gamma_h}\,,\notag
\end{align}
which is satisfied for all $(\bfz_h,z_h)^\top \in \Vhk(0)\times
\Qhkc$. Here, we denoted~the~discrete~convective~term by
$b_h\colon\WDG\times\WDG\times\WDG \to  \mathbb{R}$, which  is defined via
\begin{align*}
    b_h(\bfx_h,\bfy_h,\bfz_h)\coloneqq \tfrac{1}{2}\hskp{\bfz_h\otimes \bfx_h}{\Ghk\bfy_h}-\tfrac{1}{2}\hskp{\bfy_h\otimes \bfx_h}{\Ghk\bfz_h}
\end{align*}
for all $(\bfx_h,\bfy_h,\bfz_h)^\top\in \WDG\times\WDG\times\WDG$.

Next, we derive the results needed to treat the extra stress tensor
depending only on the symmetric part of the gradient.

\begin{lemma}\label{lem:poin_F}
        Let $\SSS$ satisfy Assumption~\ref{assum:extra_stress} with
        $p\!\in\! (1,\infty)$ and $\delta\!\ge \!0$. \!Then,~there~exists \hspace{-0.2mm}a \hspace{-0.2mm}constant \hspace{-0.2mm}$c\!>\!0$, \hspace{-0.2mm}depending \hspace{-0.2mm}only \hspace{-0.2mm}on \hspace{-0.2mm}the \hspace{-0.2mm}characteristics \hspace{-0.2mm}of \hspace{-0.2mm}$\SSS$ \hspace{-0.2mm}and~\hspace{-0.2mm}the~\hspace{-0.2mm}\mbox{chunkiness}~\hspace{-0.2mm}${\omega_0\!>\!0}$, such that for every  $\bfw_h \in
      W^{1,p}(\mathcal{T}_h)$ with $\bfF(\bfD_h\bfw_h)\in W^{1,2}(\mathcal{T}_h)$ and $K \in \mathcal{T}_h$,~it~holds
        \begin{align}
           \int_K{
        \bigabs{\bfF(\bfD_h \bfw_h) - \bfF\big(\mean{\bfD_h \bfw_h}_K\big)}^2 \,\mathrm{d}x}&\leq c\,h^2 \int_K{\abs{\nabla\bfF(\bfD_h\bfw_h)}^2\,\mathrm{d}x}\,,\label{lem:poin_F.1}\\
         \bignorm{\bfF(\bfD_h\bfw_h)-\bfF\big(\Pia\bfD_h\bfw_h\big)}_2^2&\leq c\,h^2 \,\bignorm{\nabla_h\bfF(\bfD_h\bfw_h)}_2^2\,.\label{lem:poin_F.2}
        \end{align}
        In particular, \eqref{lem:poin_F.1} also applies if we replace
        $K$ with $S_\gamma\coloneqq \bigcup\{K\in \mathcal{T}_h\mid
        \gamma\in \partial K\}$ or $\smash{S_K=\bigcup\{K'\in \mathcal{T}_h\mid \partial K'\cap \partial K\}}$.
    \end{lemma}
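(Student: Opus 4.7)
The plan is to obtain \eqref{lem:poin_F.1} by combining the classical $L^2$-Poincaré inequality, applied to the function $\bfF(\bfD_h\bfw_h)\in W^{1,2}(K)$, with a nonlinear \emph{interchange} estimate that allows one to replace the arithmetic mean $\mean{\bfF(\bfD_h\bfw_h)}_K$ by the nonlinear mean $\bfF(\mean{\bfD_h\bfw_h}_K)$. The first step relies on shape-regularity of $K$ and the hypothesis $\bfF(\bfD_h\bfw_h)\in W^{1,2}(\mathcal{T}_h)$, which immediately yield
\begin{align*}
\int_K \bigl|\bfF(\bfD_h\bfw_h)-\mean{\bfF(\bfD_h\bfw_h)}_K\bigr|^2\,\mathrm{d}x\leq c\,h^2\int_K \bigl|\nabla \bfF(\bfD_h\bfw_h)\bigr|^2\,\mathrm{d}x\,.
\end{align*}
For the interchange, I would use the pointwise equivalence $|\bfF(\bfA)-\bfF(\bfB)|^2\sim \phi_{|\bfA^{\textup{sym}}|}(|\bfA^{\textup{sym}}-\bfB^{\textup{sym}}|)$ from Proposition \ref{lem:hammer} together with the change-of-shift Lemma \ref{lem:shift-change}, applied pointwise with $\bfA=\bfD_h\bfw_h(x)$ and $\bfB=\mean{\bfD_h\bfw_h}_K$, in order to transfer the variable pointwise shift $|\bfD_h\bfw_h(x)|$ onto the constant shift $|\mean{\bfD_h\bfw_h}_K|$. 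Choosing $\varepsilon>0$ small enough, the residual copy of $|\bfF(\bfD_h\bfw_h)-\bfF(\mean{\bfD_h\bfw_h}_K)|^2$ produced by the shift change is absorbed on the left. This should yield
\begin{align*}
\int_K \bigl|\bfF(\bfD_h\bfw_h)-\bfF(\mean{\bfD_h\bfw_h}_K)\bigr|^2\,\mathrm{d}x\leq c\int_K \bigl|\bfF(\bfD_h\bfw_h)-\mean{\bfF(\bfD_h\bfw_h)}_K\bigr|^2\,\mathrm{d}x\,,
\end{align*}
and chaining with the Poincaré inequality above gives \eqref{lem:poin_F.1}.

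The estimate \eqref{lem:poin_F.2} will then follow by summing \eqref{lem:poin_F.1} over all $K\in\mathcal{T}_h$ and using the locality of the $L^2$-projection onto piecewise constants, namely $\Pia\bfD_h\bfw_h|_K=\mean{\bfD_h\bfw_h}_K$ for every $K\in\mathcal{T}_h$. The extension of \eqref{lem:poin_F.1} to the patches $S_\gamma$ and $S_K$ will exploit shape-regularity: each patch consists of a uniformly bounded number of simplices (depending only on the chunkiness $\omega_0>0$) of comparable diameter $\sim h$. One would then apply \eqref{lem:poin_F.1} on each $K'$ in the patch and control the mismatch between $\mean{\bfD_h\bfw_h}_{K'}$ and the patch average by one more round of Lemma \ref{lem:shift-change}, noting carefully that $\bfF(\bfD_h\bfw_h)$ may jump across interior faces, so the right-hand side is understood as a piecewise integral.

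The main obstacle will be the interchange step. The classical best-constant property for linear means $L^2$-orthogonally only gives the \emph{reverse} direction, so extracting the inequality we need genuinely requires the $(p,\delta)$-structure via Proposition \ref{lem:hammer} and Lemma \ref{lem:shift-change}. These encode that, after a controlled shift change, $\bfF$ behaves quasi-linearly in $L^2(K)$ around the mean value $\mean{\bfD_h\bfw_h}_K$, which is precisely what is needed to move the mean through the nonlinear map $\bfF$. No appeal to the local Korn inequality is required here because $\bfF$ already incorporates the symmetrization through its definition \eqref{eq:def_F}, and the entire argument operates on the (symmetric-valued) quantity $\bfF(\bfD_h\bfw_h)$ itself.
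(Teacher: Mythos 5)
Your overall decomposition -- the classical $L^2$-Poincar\'e inequality for $\bfF(\bfD_h\bfw_h)$ on $K$ combined with an interchange estimate that replaces $\mean{\bfF(\bfD_h\bfw_h)}_K$ by $\bfF(\mean{\bfD_h\bfw_h}_K)$ -- is exactly the route the paper takes: it cites \cite[Lemma A.3]{bdr-phi-stokes} for the interchange and then invokes Poincar\'e. The genuine gap is in your proof of the interchange itself. The pointwise use of Proposition \ref{lem:hammer} and the shift change of Lemma \ref{lem:shift-change} with $\bfA=\bfD_h\bfw_h(x)$, $\bfB=\mean{\bfD_h\bfw_h}_K$, followed by absorption, only converts $\int_K\bigabs{\bfF(\bfD_h\bfw_h)-\bfF(\mean{\bfD_h\bfw_h}_K)}^2\,\mathrm{d}x$ into $c\int_K\phi_{\abs{\mean{\bfD_h\bfw_h}_K}}\big(\bigabs{\bfD_h\bfw_h-\mean{\bfD_h\bfw_h}_K}\big)\,\mathrm{d}x$; nothing in the step you describe ever introduces the arithmetic mean $\mean{\bfF(\bfD_h\bfw_h)}_K$, so the inequality you announce does not follow from what you wrote. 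The missing idea is the mechanism behind \cite[Lemma A.3]{bdr-phi-stokes}: write $\bfD_h\bfw_h(x)-\mean{\bfD_h\bfw_h}_K=\dashint_K(\bfD_h\bfw_h(x)-\bfD_h\bfw_h(y))\,\mathrm{d}y$, apply Jensen's inequality for the convex shifted N-function $\phi_{\abs{\mean{\bfD_h\bfw_h}_K}}$, change the shift to $\abs{\bfD_h\bfw_h(x)}$ (the $\varepsilon$-copy of the left-hand side is finite, since $\bfF(\bfD_h\bfw_h)\in L^2(K)$, and can be absorbed) so as to recognize $\abs{\bfF(\bfD_h\bfw_h(x))-\bfF(\bfD_h\bfw_h(y))}^2$, and then bound the resulting doubled mean by $c\dashint_K\abs{\bfF(\bfD_h\bfw_h)-\mean{\bfF(\bfD_h\bfw_h)}_K}^2\,\mathrm{d}x$ via the triangle inequality. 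With this step supplied, your derivation of \eqref{lem:poin_F.1}, and \eqref{lem:poin_F.2} by summation over $K\in\mathcal{T}_h$ using $(\Pia\bfD_h\bfw_h)|_K=\mean{\bfD_h\bfw_h}_K$, is sound, and you are right that no Korn inequality is needed here.

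Your treatment of the patches $S_\gamma$ and $S_K$ is also not viable as described. Applying \eqref{lem:poin_F.1} elementwise and controlling the mismatch between the element means and the patch mean ``by one more round of Lemma \ref{lem:shift-change}'', with the right-hand side understood as a broken integral, cannot work: for a genuinely discontinuous $\bfD_h\bfw_h$ (say, piecewise constant with a jump across the interior face of $S_\gamma$) the broken gradient $\nabla_h\bfF(\bfD_h\bfw_h)$ vanishes on the patch while the left-hand side does not, so no estimate by $h^2\,\|\nabla_h\bfF(\bfD_h\bfw_h)\|_{2,S_\gamma}^2$ alone can hold. The patch version is meant for (and in the paper only applied to) the situation where $\bfF$ of the symmetric gradient belongs to $W^{1,2}$ of the whole patch -- always $\bfw_h=\bfv$ with $\bfF(\bfD\bfv)\in W^{1,2}(\Omega)$ -- and in that case the one-domain argument (Poincar\'e on the connected patch, whose constant is controlled by the chunkiness $\omega_0$, plus the same interchange estimate on the patch) applies verbatim, with no elementwise splitting or mean-matching required.
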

    
\begin{proof}
      Follows from \cite[Lemma A.3]{bdr-phi-stokes} and Poincar\'e's inequality.
\end{proof}

\begin{proposition}\label{prop:app_V}
    Let $\SSS$ satisfy Assumption~\ref{assum:extra_stress} with $p\in (1,\infty)$ and $\delta\ge 0$,~and~let $k\in \mathbb{N}$. Then, there exists a constant $c>0$, depending only on the characteristics~of~$\SSS$, the chunkinesss $\omega_0>0$, and $k\in \mathbb{N}$, such that for every $K \in \mathcal{T}_h$ and  ${\bfw_h\in
      W^{1,p}(\mathcal{T}_h)}$, it~holds
      \begin{align}
        \int_K \bigabs{\bfF (\bfD_h \bfw_h) - \bfF \big(\bfD_h \PiDG \bfw_h\big)}^2
        \,\mathrm{d}x &\leq c\,  \int_K
        \bigabs{\bfF(\bfD_h \bfw_h) - \bfF\big(\mean{\bfD_h \bfw_h}_K\big)}^2 \,\mathrm{d}x\,.\label{prop:app_V.0.1}
      \end{align}
      In addition, there exists a constant $c>0$, depending only on
      the characteristics of $\SSS$, the chunkinesss $\omega_0>0$, and
      $k\in \mathbb{N}$ such that for every $K \in \mathcal{T}_h$  and
      ${\bfw_h \in
      W^{1,p}(\mathcal{T}_h)}$ with ${\bfF(\bfD_h\bfw_h)\in W^{1,2}(\mathcal{T}_h)}$, it holds
      \begin{align}
        \int_K \bigabs{\bfF (\bfD_h \bfw_h) - \smash{\bfF \big(\bfD_h \PiDG \bfw_h\big)}}^2
        \,\mathrm{d}x &\leq c\,  h\,\int_K
        \bigabs{\nabla\bfF(\bfD_h \bfw_h)}^2 \,\mathrm{d}x\,,\label{prop:app_V.0.2}\\
          \bignorm{\bfF (\bfD_h \bfw_h) - \bfF \big(\bfD_h \PiDG \bfw_h\big)}^2_2&\leq c\,h^2\,
        \norm{\nabla_h\bfF(\bfD_h \bfw_h)}^2_2\,.\label{prop:app_V.0.3}
      \end{align}
    \end{proposition}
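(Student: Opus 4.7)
The three inequalities cascade. Inequality \eqref{prop:app_V.0.2} follows by combining \eqref{prop:app_V.0.1} with Lemma \ref{lem:poin_F}\eqref{lem:poin_F.1} applied on $K$, and \eqref{prop:app_V.0.3} follows from summing \eqref{prop:app_V.0.2} over $K\in\mathcal{T}_h$ (shape regularity ensures the overlap bound when the patch version of \eqref{lem:poin_F.1} is used). So the substance is entirely in \eqref{prop:app_V.0.1}.

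For \eqref{prop:app_V.0.1} the plan is to insert $\bfF(\mean{\bfD_h\bfw_h}_K)$ via the triangle inequality in $L^2(K)$; the first piece already is the RHS, and it remains to bound
\begin{align*}
I_K \coloneqq \int_K \bigabs{\bfF(\bfD_h\PiDG\bfw_h) - \bfF(\mean{\bfD_h\bfw_h}_K)}^2\,\mathrm{d}x
\end{align*}
by the RHS of \eqref{prop:app_V.0.1}. Denoting $\bfA\coloneqq\mean{\bfD_h\bfw_h}_K\in\setR^{d\times d}_{\textup{sym}}$ (a constant on $K$), I would choose $\mathbf{p}(x)\coloneqq \bfA\,x+\mathbf{c}$ with $\mathbf{c}\in\setR^d$ such that $\mean{\bfw_h-\mathbf{p}}_K=\mathbf{0}$. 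Since $\mathbf{p}\in\mathcal{P}_1(K)^d\subseteq\mathcal{P}_k(K)^d$, one has $\PiDG\mathbf{p}=\mathbf{p}$ on $K$, hence $\bfD_h\PiDG\bfw_h-\bfA=\bfD_h\PiDG(\bfw_h-\mathbf{p})$ on $K$.

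The core is a chain of estimates at the level of shifted modulars. By the hammer equivalence \eqref{eq:hammera} (applied with the constant shift $\abs{\bfA}$, which is admissible in view of Remark \ref{rem:sa}),
\begin{align*}
I_K \sim \int_K \phi_{\abs{\bfA}}\bigl(\abs{\bfD_h\PiDG(\bfw_h-\mathbf{p})}\bigr)\,\mathrm{d}x.
\end{align*}
On the polynomial $\PiDG(\bfw_h-\mathbf{p})$, the elementwise Orlicz inverse inequality followed by the $L^{\phi_{\abs{\bfA}}}$--stability of $\PiDG$ (both standard because $\phi_a\in\Delta_2$ uniformly in $a\ge 0$ and $\abs{\bfA}$ is constant on $K$) yields
\begin{align*}
\int_K \phi_{\abs{\bfA}}\bigl(\abs{\bfD_h\PiDG(\bfw_h-\mathbf{p})}\bigr)\,\mathrm{d}x
\le c\int_K \phi_{\abs{\bfA}}\bigl(h_K^{-1}\abs{\bfw_h-\mathbf{p}}\bigr)\,\mathrm{d}x.
\end{align*}
Applying the local Korn inequality \cite[Theorem~6.13]{john} to the zero-mean function $\bfw_h-\mathbf{p}$ in the Orlicz norm with the constant shift $\abs{\bfA}$ (this is precisely where the symmetric-gradient version replaces the full-gradient Poincar\'e step used in \cite{kr-phi-ldg}), I obtain
\begin{align*}
\int_K \phi_{\abs{\bfA}}\bigl(h_K^{-1}\abs{\bfw_h-\mathbf{p}}\bigr)\,\mathrm{d}x
\le c\int_K \phi_{\abs{\bfA}}\bigl(\abs{\bfD_h\bfw_h-\bfA}\bigr)\,\mathrm{d}x,
\end{align*}
and this last modular is $\sim \int_K\abs{\bfF(\bfD_h\bfw_h)-\bfF(\bfA)}^2\,\mathrm{d}x$ by another use of \eqref{eq:hammera}, which is the RHS of \eqref{prop:app_V.0.1}.

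The main obstacle I expect is checking that all three auxiliary tools—the Orlicz inverse inequality, the $L^{\phi_{\abs{\bfA}}}$--stability of $\PiDG$, and most importantly the local Korn inequality—transfer to the shifted N-function $\phi_{\abs{\bfA}}$ with constants independent of $\abs{\bfA}$. The uniform $\Delta_2$ control of the family $\{\phi_a\}_{a\ge 0}$ together with the local (elementwise) character of $\PiDG$ handles the first two; the third is the delicate point, and it is handled exactly as the authors emphasize in the preamble to Section \ref{sec:rates}, by using \cite[Theorem~6.13]{john} in place of the full-gradient Poincar\'e step from \cite{kr-phi-ldg}. If one prefers to use the shift $\abs{\bfD_h\bfw_h}$ instead of the constant $\abs{\bfA}$, the change-of-shift Lemma \ref{lem:shift-change} absorbs the excess into a small fraction of the RHS of \eqref{prop:app_V.0.1}, closing the estimate.
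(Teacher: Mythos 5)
Your overall skeleton (inserting $\bfF(\mean{\bfD_h\bfw_h}_K)$, using an affine companion that is invariant under $\PiDG$, inverse estimate plus Orlicz stability of $\PiDG$, Korn, shift changes, and then deducing \eqref{prop:app_V.0.2} from Lemma \ref{lem:poin_F} and \eqref{prop:app_V.0.3} by summation) matches the paper's proof, but the central step is flawed. With your choice $\mathbf{p}(x)=\bfA\,x+\mathbf{c}$, where $\bfA=\mean{\bfD_h\bfw_h}_K$ is \emph{symmetric}, the claimed bound $\int_K\phi_{\abs{\bfA}}\big(h_K^{-1}\abs{\bfw_h-\mathbf{p}}\big)\,\mathrm{d}x\le c\int_K\phi_{\abs{\bfA}}\big(\abs{\bfD_h\bfw_h-\bfA}\big)\,\mathrm{d}x$ is false: a zero mean value does not exclude infinitesimal rotations. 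Take $\bfw_h(x)=\mathbf{W}(x-x_K)$ on $K$ with $\mathbf{W}\neq\mathbf{0}$ skew-symmetric and $x_K$ the barycenter of $K$; then $\bfA=\mathbf{0}$, $\mathbf{p}=\mathbf{0}$, the right-hand side vanishes, while the left-hand side is positive. Korn's inequality in the Orlicz setting (\cite[Theorem~6.13]{john}) controls the deviation of $\nabla\bfu$ from its mean by the deviation of $\bfD\bfu$ from its mean; it cannot deliver a Poincar\'e-type bound for $\bfu$ itself in terms of $\bfD\bfu$ under a mere zero-mean normalization, which is what your chain requires once you have passed, via the inverse estimate, from $\bfD_h\PiDG(\bfw_h-\mathbf{p})$ to $h_K^{-1}\abs{\bfw_h-\mathbf{p}}$. (Your counterexample function is linear, so \eqref{prop:app_V.0.1} itself is trivially true for it, but your intermediate quantity is strictly larger than both sides and the chain cannot be closed.)

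The repair is precisely the paper's choice of companion: take $\mathfrak{p}_h$ affine with $\nabla\mathfrak{p}_h=\mean{\nabla_h\bfw_h}_K$ on $K$, i.e., subtract also the skew part of the mean gradient. One still has $\bfD\mathfrak{p}_h=\mean{\bfD_h\bfw_h}_K$ and $\PiDG\mathfrak{p}_h=\mathfrak{p}_h$, so your algebraic identities survive. Then the stability of $\PiDG$ (or your inverse-estimate/Poincar\'e route, now applied to the fully normalized function $\bfw_h-\mathfrak{p}_h$) gives $\int_K\phi_{\abs{\bfD\mathfrak{p}_h}}\big(\abs{\bfD_h\PiDG(\bfw_h-\mathfrak{p}_h)}\big)\,\mathrm{d}x\le c\int_K\phi_{\abs{\bfD\mathfrak{p}_h}}\big(\abs{\nabla_h\bfw_h-\mean{\nabla_h\bfw_h}_K}\big)\,\mathrm{d}x$, and only now Korn in the mean-deviation form bounds this by $c\int_K\phi_{\abs{\bfD\mathfrak{p}_h}}\big(\abs{\bfD_h\bfw_h-\mean{\bfD_h\bfw_h}_K}\big)\,\mathrm{d}x$; shift changes (Lemma \ref{lem:shift-change}) and \eqref{eq:hammera} then finish exactly as you describe. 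With this modification your argument coincides with the paper's proof.
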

  
\begin{proof}
      This result is essentially proved in \cite[Thm.~5.7]{dr-interpol}
      and \cite[Thm.~3.4]{bdr-phi-stokes},~if~one~notices that the
      assumption on the interpolation operators therein are satisfied with~$S_K$ replaced by $K$. For the convenience of the reader, we
      carry~out~these~arguments~here. For every
      $\bfw_h \in W^{1,p}(\mathcal{T}_h)$, 
      Lemma~\ref{lem:hammer} implies that
      $\bfF(\bfD_h \bfw_h) \in L^2(\Omega)$. We fix an arbitrary
      $K \in \mathcal{T}_h$ and choose a linear function
      $\mathfrak{p}_h\colon\Omega\to      \mathbb{R}^d $ such that
            ${\nabla \mathfrak{p}_h =
        \mean {\nabla
          \bfw_h}_K}$~in~$K$,
      which~implies that $ \bfD \mathfrak{p}_h = \mean{\bfD \bfw_h}_K$~in~$K$. Using Lemma~\ref{lem:hammer},~the~triangle~inequality,
      and $\PiDG\mathfrak{p}_h=\mathfrak{p}_h$ in $\Vhk$ since already
      $\mathfrak{p}_h\in \Vhk$, we find that
    \begin{align}\label{prop:app_V.1}
            \int_K &\bigabs{\bfF (\bfD \bfw_h) - \bfF\big(\bfD \PiDG \bfw_h\big)}^2 \,\mathrm{d}x
            \leq c\, \int_K \phi_{|\bfD \bfw_h|}\big(\bigabs{\bfD \bfw_h - \bfD
              \PiDG\bfw_h}\big) \,\mathrm{d}x
            \\
            &\leq c\, \int_K \phi_{|\bfD \bfw_h|}(\abs{\bfD \bfw_h - \bfD \mathfrak{p}_h})
            \,\mathrm{d}x+ c\, \int_K \phi_{|\bfD \bfw_h|}\big(\bigabs{ \bfD\PiDG (\bfw_h-\mathfrak{p}_h
              )}\big) \,\mathrm{d}x 
              \eqqcolon I_1 + I_2\,.\notag
    \end{align}
    To apply the (local) Orlicz-stability of $\PiDG$ (cf.~\cite[Lemma A.5]{kr-phi-ldg})~to~$I_2$ we use the shift change~from~Lemma~\ref{lem:shift-change},~i.e.,~we~bound~$I_2$~as~follows
    \begin{align}\label{prop:app_V.2}
      I_2\leq c \int_K\phi_{|\bfD \mathfrak{p}_h|}\big(\bigabs{ \bfD\PiDG (\bfw_h-\mathfrak{p}_h
        )}\big) \,\mathrm{d}x+ c\,I_1\,.
    \end{align}
    Then, we can resort to the (local) Orlicz-stability of $\PiDG$
    (cf.~\cite[Lemma A.5]{kr-phi-ldg}), to Korn's
    inequality in the Orlicz setting (cf.~\cite[Theorem 6.13]{john}),
    also  using that  $\nabla \mathfrak{p}_h  =\mean{\nabla \bfw_h}_K$ in $K$,
    and use another shift change from Lemma~\ref{lem:shift-change} 
    to obtain
    \begin{align}\label{prop:app_V.3}
      \int_K\phi_{|\bfD \mathfrak{p}_h|}\big(\bigabs{ \bfD\PiDG (\bfw_h-\mathfrak{p}_h
        )}\big) \,\mathrm{d}x 
     \leq c\, 
     \int_K \phi_{|\bfD \mathfrak{p}_h|}(\abs{ \nabla
        \bfw_h-\nabla\mathfrak{p}_h }) \,\mathrm{d}x
      \leq c\,I_1\,.
    \end{align}
    Combining \hspace{-0.2mm}\eqref{prop:app_V.1}--\eqref{prop:app_V.3}, \hspace{-0.2mm}also  \hspace{-0.2mm}using  \hspace{-0.2mm}$\bfD \mathfrak{p}_h \!=\!\mean{\bfD \bfw_h}_K$~\hspace{-0.2mm}in~\hspace{-0.2mm}$K$ \hspace{-0.2mm}and \hspace{-0.2mm}\eqref{eq:hammera}, \hspace{-0.2mm}we \hspace{-0.2mm}conclude~\hspace{-0.2mm}that~\hspace{-0.2mm}\eqref{prop:app_V.0.1} applies.
    Then, \eqref{prop:app_V.0.2} follows~from Lemma \ref{lem:poin_F} and \eqref{prop:app_V.0.3} follows from \eqref{prop:app_V.0.2} via summation with respect to $K\in \mathcal{T}_h$.
    \end{proof}

Based on the previous results, we  generalize \cite[Lemma
5.5]{kr-phi-ldg} to the~case~of~symmetric
gradients. 

\begin{lemma}\label{lem:e7}
  Let $\SSS$ satisfy Assumption~\ref{assum:extra_stress}  with $p\in
  (1,\infty)$~and~${\delta\ge 0}$,~and~${j\in \setN_0} $. Then, there exists a constant $c>0$, depending only on the
    characteristics of ${\SSS}$,~the~chun-kiness $ \omega_0>0$, and $j\in \setN_0$ such that for every  $\bfX \in L^p(\Omega)$, $\bfY \in L^{p'}(\Omega)$,~${\bfw_h \in W^{1,p}(\mathcal{T}_h)}$ with
   $\bfF(\bfD_h \bfw_h) \in W^{1,2}(\mathcal{T}_h)$, it~holds
  \begin{align*}
      \bignorm{\bfF(\bfD_h \bfw_h) -\smash{\bfF\big(\Uppi^j_h\bfX\big)}}_2^2 &\le
      c\, h^2 \norm{\nabla_h \bfF(\bfD_h \bfw_h) }_2^2 + c\,
      \norm{\bfF(\bfD_h \bfw_h) -\bfF(\bfX)}_2^2\,,\\
        \bignorm{\bfF^*(\SSS(\bfD_h \bfw_h)) -\smash{\bfF^*\big(\Uppi^j_h\SSS(\bfY)\big)}}_2^2 &\le
      c\, h^2 \norm{\nabla_h \bfF(\bfD_h \bfw_h) }_2^2\\&\quad + c\,
      \norm{\bfF^*(\SSS(\bfD_h \bfw_h)) -\bfF^*(\SSS(\bfY))}_2^2\,.
  \end{align*}
\end{lemma}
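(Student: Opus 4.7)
The plan is to mimic the proof of \cite[Lemma~5.5]{kr-phi-ldg}, replacing full gradients by symmetric gradients and exploiting that every required estimate is local on $K\in\mathcal{T}_h$. First, I split via the triangle inequality,
\begin{align*}
    \bignorm{\bfF(\bfD_h\bfw_h) - \bfF(\Uppi^j_h\bfX)}_2^2
    &\leq 2\bignorm{\bfF(\bfD_h\bfw_h) - \bfF(\Uppi^j_h\bfD_h\bfw_h)}_2^2 \\
    &\quad + 2\bignorm{\bfF(\Uppi^j_h\bfD_h\bfw_h) - \bfF(\Uppi^j_h\bfX)}_2^2\,,
\end{align*}
and treat each summand separately.

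For the first summand, fix $K\in\mathcal{T}_h$ and set $\bfQ_K\coloneqq\mean{\bfD_h\bfw_h}_K\in X_h^j$. Using Lemma~\ref{lem:hammer} I pass to the shifted modular $\phi_{\abs{\bfD_h\bfw_h}}(\abs{\bfD_h\bfw_h-\Uppi^j_h\bfD_h\bfw_h})$, shift from $\abs{\bfD_h\bfw_h}$ to $\abs{\bfQ_K}$ via Lemma~\ref{lem:shift-change}, apply the local Orlicz-stability of $\Uppi^j_h$ from \cite[Lemma~A.5]{kr-phi-ldg} (which becomes a best-approximation estimate since $\bfQ_K\in X_h^j$), shift back to $\abs{\bfD_h\bfw_h}$, and close via Lemma~\ref{lem:poin_F} and \eqref{eq:hammera}. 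All shift-change error terms are of the form $\phi_{\abs{\bfQ_K}}(\abs{\bfD_h\bfw_h-\bfQ_K})\sim\abs{\bfF(\bfD_h\bfw_h)-\bfF(\bfQ_K)}^2$, hence also controlled by Lemma~\ref{lem:poin_F}. Summation over $K\in\mathcal{T}_h$ gives the desired $c\,h^2\norm{\nabla_h\bfF(\bfD_h\bfw_h)}_2^2$ bound for this summand.

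For the second summand, linearity of $\Uppi^j_h$ gives $\Uppi^j_h\bfD_h\bfw_h-\Uppi^j_h\bfX=\Uppi^j_h(\bfD_h\bfw_h-\bfX)$. Applying Lemma~\ref{lem:hammer}, I shift successively from $\abs{\Uppi^j_h\bfD_h\bfw_h}$ to $\abs{\bfD_h\bfw_h}$ (the shift-change error being absorbed by the first summand) and then to $\abs{\bfQ_K}$ (absorbed via Lemma~\ref{lem:poin_F}), apply the local Orlicz-stability of $\Uppi^j_h$ with the constant shift $\abs{\bfQ_K}$, shift back to $\abs{\bfD_h\bfw_h}$, and return via Lemma~\ref{lem:hammer} to $\abs{\bfF(\bfD_h\bfw_h)-\bfF(\bfX)}^2$; summation over $K\in\mathcal{T}_h$ completes the first inequality. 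The second inequality follows by running the identical argument with the substitutions $\bfF\to\bfF^*$, $\phi\to\phi^*$, $\bfD_h\bfw_h\to\SSS(\bfD_h\bfw_h)$, $\bfX\to\SSS(\bfY)$, exploiting the dual line of \eqref{eq:hammera}, namely $(\phi^*)_{\abs{\SSS(\bfA^{\textup{sym}})}}(\abs{\SSS(\bfA)-\SSS(\bfB)})\sim\abs{\bfF(\bfA)-\bfF(\bfB)}^2$, together with \eqref{eq:F-F*3}, to transport every Poincar\'e-type contribution for $\bfF^*\circ\SSS$ back to one for $\bfF$ via Lemma~\ref{lem:poin_F}.

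The main obstacle is the bookkeeping of the several consecutive shift changes, so that every error term is either absorbed by the dominant $h^2\norm{\nabla_h\bfF(\bfD_h\bfw_h)}_2^2$ contribution or by the $\bfF$/$\bfF^*$-norm term already present on the right-hand side. The only subtle point relative to \cite[Lemma~5.5]{kr-phi-ldg} is that $\bfF$ and $\bfF^*\circ\SSS$ depend only on the symmetric part of their argument while $\Uppi^j_h$ acts on full tensors; since $X_h^j$ is closed under transposition, $[\Uppi^j_h\bfX]^{\textup{sym}}=\Uppi^j_h[\bfX^{\textup{sym}}]$, so all shifted-modular estimates transfer verbatim to the symmetric setting.
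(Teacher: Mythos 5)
Your proposal is correct and relies on the same toolkit as the paper's proof: a triangle-inequality splitting, the equivalences of Proposition~\ref{lem:hammer}, shift changes from Lemma~\ref{lem:shift-change}, the local Orlicz-stability of the $L^2$-projection with a shift that is constant on each simplex, and the Poincar\'e-type estimates of Lemma~\ref{lem:poin_F}; your closing observation that one may replace $\bfX$ by $\bfX^{\textup{sym}}$ because $\Uppi^j_h$ commutes with symmetrization is exactly the (implicit) reduction needed in the paper as well. The only real difference is the intermediate quantity: you insert $\bfF\big(\Uppi^j_h\bfD_h\bfw_h\big)$, which obliges you to prove the additional bound $\bignorm{\bfF(\bfD_h\bfw_h)-\bfF\big(\Uppi^j_h\bfD_h\bfw_h\big)}_2^2\le c\,h^2\,\norm{\nabla_h\bfF(\bfD_h\bfw_h)}_2^2$ (your first summand) and to perform several consecutive shift changes, whereas the paper inserts $\bfF\big(\Pia\bfD_h\bfw_h\big)$, so its first summand is precisely \eqref{lem:poin_F.2} and its second summand needs only one application of the Orlicz-stability of $\Uppi^j_h$, thanks to the identity $\Uppi^j_h\Pia\bfD_h\bfw_h=\Pia\bfD_h\bfw_h$, i.e., $\Pia\bfD_h\bfw_h-\Uppi^j_h\bfX=\Uppi^j_h\big(\Pia\bfD_h\bfw_h-\bfX\big)$. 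Your route closes all the same because the constant $\mean{\bfD_h\bfw_h}_K$ lies in $X_h^j$ (so the stability of $\Uppi^j_h$ with this constant shift turns into the best-approximation-type estimate you describe) and every shift-change error is of the form $\abs{\bfF(\bfD_h\bfw_h)-\bfF(\mean{\bfD_h\bfw_h}_K)}^2$, absorbed by Lemma~\ref{lem:poin_F}; it is merely a bit longer in bookkeeping. For the $\bfF^*$-inequality both you and the paper argue by analogy through the last equivalence in \eqref{eq:hammera} and \eqref{eq:F-F*3}; when carrying this out, it is cleanest to take $\SSS\big(\mean{\bfD_h\bfw_h}_K\big)$ (a constant, hence in $X_h^j$) as the local reference, so that every Poincar\'e-type contribution for $\bfF^*\circ\SSS$ indeed reduces, via \eqref{eq:F-F*3}, to Lemma~\ref{lem:poin_F} for $\bfF$, exactly as you indicate.
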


\begin{proof}\let\qed\relax
  Resorting to Lemma \ref{lem:poin_F} and Lemma
  \ref{lem:hammer}, we find that
  \begin{align}\label{eq:e7.1}
  \begin{aligned}
    &\bignorm{\bfF(\bfD_h \bfw_h) -\smash{\bfF\big(\Uppi^j_h\bfX\big)}}_2^2
    \\
    &\le 2\,\bignorm{\bfF(\bfD_h \bfw_h) -\smash{\bfF\big(\Pia \bfD_h\bfw_h\big)}}_2^2 + 2\,
      \bignorm{\bfF\big(\Pia \bfD_h\bfw_h\big) -\smash{\bfF\big(\Uppi^j_h\bfX\big)}}_2^2
    \\
    &\le c\, h^2 \norm{\nabla_h \bfF(\bfD_h \bfw_h) }_2^2 + c\,
      \smash{\rho_{\phi_{\smash{\abs{\Pia \bfD_h\bfw_h}}},\Omega}\big( \Pia \bfD_h \bfw_h
      - \Uppi^j_h \bfX\big)}\,.
      \end{aligned}
  \end{align}
  The identity  $\Uppi^j_h\Pia \bfD_h \bfw_h=\Pia \bfD_h \bfw_h$, the Orlicz-stability of $\Uppi^j_h$
    (cf.~\cite[Lemma A.5]{kr-phi-ldg}),  Lemma \ref{lem:hammer}, and Lemma \ref{lem:poin_F}  yield
  \begin{align}\label{eq:e7.2}
  \begin{aligned}
    &\rho_{\phi_{\smash{\abs{\Pia \bfD\bfv}}},\Omega}\big( \Pia \bfD \bfv
    - \Uppi^j_h \bfX\big)
    =       \rho_{\phi_{\smash{\abs{\Pia \bfD_h\bfw_h}}},\Omega}\big( \Uppi^j_h\big(\Pia \bfD_h \bfw_h
      - \bfX\big)\big)
    \\
    &\le c\,      \rho_{\phi_{\smash{\abs{\Pia \bfD_h\bfw_h}}},\Omega}\big( \Pia \bfD_h \bfw_h
      - \bfX\big)
    \\
    &\le c\, \bignorm{\smash{\bfF\big(\Pia\bfD_h\bfw_h\big)} -\bfF(\bfD_h\bfw_h) }_2^2  +c\,
      \norm{ \bfF(\bfD_h \bfw_h) -\bfF(\bfX) }_2^2  
    \\
    &\le c\, h^2 \norm{\nabla_h \bfF(\bfD_h \bfw_h) }_2^2 +c\, \norm{
      \bfF(\bfD_h \bfw_h) -\bfF(\bfX) }_2^2 \,.
      \end{aligned}
  \end{align}
  Combining \eqref{eq:e7.1} and \eqref{eq:e7.2}, we conclude the first assertion. The second assertion follows analogously, if we use \eqref{eq:F-F*3} and \cite[Lemma 4.4]{dkrt-ldg}.
\end{proof}

Next, we treat the jump operator and several expressions defined on faces.

\begin{proposition}\label{prop:e4}
      Let $\SSS$ satisfy Assumption~\ref{assum:extra_stress} with
      $p\in (1,\infty)$ and $\delta \ge 0$,~and~let $k\in \mathbb{N}$. Then, there exists a constant $c>0$, depending only on the
    characteristics~of~${\SSS }$, the chunkiness $\omega_0\!>\!0$, and $k\!\in\! \mathbb{N}$ such that for every $\bfv \!\in\! W^{1,p}(\Omega)$ with~${\bfF(\bfD \bfv) \!\in\! W^{1,2}(\Omega)}$ and $\gamma\in \Gamma_h$,~it~holds
       \begin{align}
        \label{eq:e4.1}
        \int_{S_\gamma}{\phi_{\abs{{\bfD
          \bfv}}} \big(\bigabs{ \boldsymbol{\mathcal R}^k_{h,\gamma}
          (\bfv-\PiDG \bfv )}\big)\, \mathrm{d}x}&\leq c\,h^2\, \int_{S_\gamma}{\vert \nabla\bfF(\bfD \bfv)\vert^2 \,\mathrm{d}x}\,,\\
          \label{eq:e4.2}
        \rho_{\phi_{\abs{\bfD \bfv}},\Omega}\big({ \Rhk(\bfv-\PiDG\bfv)}\big)
        &
        \le  c\, h^2\,\norm{\nabla \bfF(\bfD  \bfv)}_2^2\,.  
      \end{align}
    \end{proposition}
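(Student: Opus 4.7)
The plan is to adapt the argument of \cite[Lemma 5.5]{kr-phi-ldg} (done there for the full gradient $\nabla$) to the symmetric gradient $\bfD$, using the same device employed in the proof of Proposition~\ref{prop:app_V}: on each $K \in \mathcal{T}_h$ introduce a linear polynomial $\mathfrak{p}_h$ with $\nabla \mathfrak{p}_h = \mean{\nabla \bfv}_K$ (so $\bfD \mathfrak{p}_h = \mean{\bfD \bfv}_K$ and $\PiDG \mathfrak{p}_h = \mathfrak{p}_h$) and invoke the local Orlicz--Korn inequality \cite[Theorem~6.13]{john}. Estimate~\eqref{eq:e4.2} will follow from~\eqref{eq:e4.1} by summation over $\gamma \in \Gamma_h$, using the shape-regularity bound $\sum_{\gamma \in \Gamma_h} \chi_{S_\gamma} \le c$.

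For \eqref{eq:e4.1}, I would proceed in three steps. First, since $\boldsymbol{\mathcal{R}}_{h,\gamma}^k (\bfv - \PiDG \bfv)$ is a polynomial supported on $S_\gamma$, I would use the Riesz-representation identity $(\boldsymbol{\mathcal{R}}_{h,\gamma}^k \bfw_h, \bfX_h) = \langle \llbracket \bfw_h \otimes \bfn \rrbracket, \{\bfX_h\} \rangle_\gamma$ with a suitable polynomial test function that extracts the modular, together with the $\varepsilon$-Young inequality in modular form and the inverse trace inequality for polynomials, to obtain for any constant shift $a \ge 0$
\begin{align*}
\int_{S_\gamma} \phi_a\big(|\boldsymbol{\mathcal{R}}_{h,\gamma}^k (\bfv - \PiDG \bfv)|\big)\, \textup{d}x \le c\, h \int_\gamma \phi_a\big(h^{-1} |\llbracket (\bfv - \PiDG \bfv) \otimes \bfn \rrbracket|\big)\, \textup{d}s.
\end{align*}
Second, the Orlicz trace inequality \cite[(A.21)]{kr-phi-ldg} combined with an Orlicz Poincar\'e-type estimate (valid since $\PiDG$ preserves constants) will yield
\begin{align*}
h \int_\gamma \phi_a\big(h^{-1} |\bfv - \PiDG \bfv|\big)\, \textup{d}s \le c \int_{S_\gamma} \phi_a(|\nabla_h (\bfv - \PiDG \bfv)|)\, \textup{d}x.
\end{align*}
Third, using the identity $\bfv - \PiDG \bfv = (\bfv - \mathfrak{p}_h) - \PiDG(\bfv - \mathfrak{p}_h)$, I would apply the (local) Orlicz stability of $\PiDG$ (cf.~\cite[Lemma A.5]{kr-phi-ldg}) with the element-constant shift $a = |\mean{\bfD \bfv}_K|$, followed by the Orlicz--Korn inequality \cite[Theorem~6.13]{john} applied to $\bfv - \mathfrak{p}_h$, to get
\begin{align*}
\int_K \phi_a(|\nabla_h (\bfv - \PiDG \bfv)|)\, \textup{d}x \le c \int_K \phi_a\big(|\bfD \bfv - \mean{\bfD \bfv}_K|\big)\, \textup{d}x.
\end{align*}

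By Lemma~\ref{lem:hammer} the last right-hand side is equivalent to $\int_K |\bfF(\bfD \bfv) - \bfF(\mean{\bfD \bfv}_K)|^2 \, \textup{d}x$, and Lemma~\ref{lem:poin_F} bounds this by $c h^2 \int_K |\nabla \bfF(\bfD \bfv)|^2 \, \textup{d}x$. The target modular in \eqref{eq:e4.1} uses the pointwise shift $|\bfD \bfv|$ rather than the element-constant $|\mean{\bfD \bfv}_K|$; this swap will be performed by iterated use of the change-of-shift estimates from Lemma~\ref{lem:shift-change}, and the resulting error terms (again of the form $|\bfF(\bfA) - \bfF(\bfB)|^2$) are absorbed by the same Lemma~\ref{lem:poin_F} bound. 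The main technical obstacle is precisely this systematic bookkeeping of shifts: every polynomial-based tool (inverse trace, $\PiDG$-stability, Korn) must be invoked with a shift that is constant on the current element, while the final statement only involves the variable pointwise shift $|\bfD \bfv|$. Boundary faces $\gamma \subset \partial \Omega$ are handled identically using the single adjacent element, with $\llbracket (\bfv - \PiDG \bfv) \otimes \bfn \rrbracket = (\bfv - \PiDG \bfv) \otimes \bfn$ on $\gamma$.
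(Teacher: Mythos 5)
Your proposal follows essentially the same route as the paper's proof: a change of shift between the variable shift $\abs{\bfD\bfv}$ and a locally constant one (controlled via Lemma~\ref{lem:shift-change} and Lemma~\ref{lem:poin_F}), the local modular stability of $\boldsymbol{\mathcal R}^k_{h,\gamma}$ and the trace/approximation property of $\PiDG$ (which the paper simply cites from \cite{kr-phi-ldg} rather than re-deriving), and then the same $\mathfrak{p}_h$-plus-Orlicz--Korn device already used in Proposition~\ref{prop:app_V} to reduce $\int\phi_a(\abs{\nabla_h(\bfv-\PiDG\bfv)})\,\mathrm{d}x$ to $\int\abs{\bfF(\bfD\bfv)-\bfF(\mean{\bfD\bfv})}^2\,\mathrm{d}x$, with \eqref{eq:e4.2} obtained by summation over faces. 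The only cosmetic difference is that the paper performs a single shift change to the patch-constant shift $\abs{\mean{\bfD\bfv}_{S_\gamma}}$ at the outset instead of your elementwise, iterated bookkeeping; the substance is identical.
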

    
\begin{proof}
      Due to
      $\smash{\Rhk\bfv \hspace{-0.05em}=\hspace{-0.05em}\sum_{\gamma \in
          \Gamma_h} \!\boldsymbol{\mathcal R}^k_{h,\gamma}\bfv}$,
      $\smash{\textup{supp}\boldsymbol{\mathcal
          R}^k_{h,\gamma}\bfv\hspace{-0.05em}\subseteq\hspace{-0.05em}
        S_\gamma}$,
      $\gamma\hspace{-0.05em}\in\hspace{-0.05em} \Gamma_h$,
      $\smash{\Omega \hspace{-0.05em}=\hspace{-0.05em}\bigcup_{\gamma\in
          \Gamma_h}{\!S_\gamma}}$, and
      that~for~each~${\gamma\in\Gamma_h}$, $S_\gamma$ consist of at
      most two elements, it suffices to~prove~\eqref{eq:e4.1}~to~get
      \eqref{eq:e4.2} via summation with respect to ${\gamma\in\Gamma_h}$.  The shift change in Lemma
      \ref{lem:shift-change}, the (local) stability properties of
      $\smash{\boldsymbol{\mathcal
          R}^k_{h,\gamma}}$~in~\cite[Lemma~A.1]{kr-phi-ldg}, Lemma
      \ref{lem:poin_F} for $S_\gamma$,
      the approximation property of
      $\PiDG$ in \cite[Corollary~A.19]{kr-phi-ldg} for
      $\bfv-\smash{\PiDG} \bfv$, yield 
      \begin{align}
       \label{eq:e4.3}     
       \begin{aligned}
        &\int_{S_\gamma}\phi_{\abs{{\bfD
          \bfv}}} \big(\bigabs{ \boldsymbol{\mathcal R}^k_{h,\gamma}
          (\bfv-\PiDG \bfv )}\big)\, \textrm{d}x
        \\
        &\le c\, \int_{S_\gamma}{\phi_{\abs{\mean{\bfD
          \bfv}_{S_\gamma}}} \big(\bigabs{ \boldsymbol{\mathcal R}^k_{h,\gamma}
          (\bfv-\PiDG \bfv)}\big)+\bigabs{\bfF(\bfD
          \bfv)-\bfF\big(\mean{\bfD\bfv}_{S_\gamma}\big)}^2\, \textrm{d}x}
        \\
        &\le c\,h\int_{\gamma}{\phi_{\abs{\mean{\bfD
          \bfv}_{S_\gamma}}} \big( h^{-1} |\jump{(\bfv-\PiDG\bfv)\otimes
      \bfn}|\big )\,
          \textrm{d}s}+ c\, h^2\,\int_{S_\gamma}{\abs{\nabla \bfF(\bfD\bfv)}^2   \, \textrm{d}x}
        \\
        &\le c\,\int_{S_\gamma}{ \phi_{\abs{\mean{\bfD
          \bfv}_{S_\gamma}}} \big(|{ \nabla_h(\bfv-\PiDG\bfv)}|\big)\, \textrm{d}x}+
          c\, h^2\,\int_{S_\gamma}{\abs{\nabla \bfF(\bfD\bfv)}^2   \, \textrm{d}x}\,.
          \end{aligned}
      \end{align}
      To treat the first term in the last line of \eqref{eq:e4.3}, we
      proceed analogously to the proof of Proposition
      \ref{prop:app_V}, i.e., we choose 
      $\mathfrak{p}_h \in V_h^1 $ with
      ${\nabla \mathfrak{p}_h = \mean {\nabla \bfv}_{S_\gamma}}$ in $S_\gamma$,
      use the (local) Orlicz-stability of $\PiDG$
    (cf.~\cite[Lemma A.5]{kr-phi-ldg}), Korn's
    inequality in the Orlicz setting (cf.~\cite[Theorem 6.13]{john}),
    Proposition \ref{lem:hammer}, and 
    Poincar\'e's inequality~on~$S_\gamma $ to find that
      \begin{align}\label{eq:e4.4}
        \begin{aligned}
          \int_{S_\gamma} \phi_{\abs{\mean{\bfD
         \bfv}_{S_\gamma}}} \big(|{ \nabla_h(\bfv-\PiDG\bfv)}|\big)   \, \textrm{d}x
          &\leq c\int_{S_\gamma}\bigabs{\bfF(\bfD
          \bfv)-\bfF\big(\mean{\bfD\bfv}_{S_\gamma}\big)}^2\, \textrm{d}x
          \\&
          \leq 
          c\, h^2\,\int_{S_\gamma}
         \abs{\nabla \bfF(\bfD \bfv)}^2   \, \textrm{d}x\,.
          \end{aligned}
      \end{align}
      Combining \eqref{eq:e4.3} and \eqref{eq:e4.4}, we conclude that \eqref{eq:e4.1} applies. Then, \eqref{eq:e4.2}~follows~from  \eqref{eq:e4.1}
      via summation with respect to $\gamma \in \Gamma_h$.
    \end{proof}
    
\begin{corollary}
        \label{cor:app_V}
      Let $\SSS$ satisfy Assumption~\ref{assum:extra_stress} with
      $p\in (1,\infty)$ and $\delta\ge 0$,~and~let $k\in \mathbb{N}$. Then, there exists a constant $c>0$, depending only on the
    characteristics~of~${\SSS}$, the chunkiness $\omega_0\!>\!0$, and $k\!\in \!\mathbb{N}$ such that for every $\bfv \!\in\! W^{1,p}_0(\Omega)$~with~${\bfF(\bfD\bfv) \!\in\! W^{1,2}(\Omega)}$ and $\gamma\in \Gamma_h$, it holds
       \begin{align}
        \label{cor:app_V.1}
        \int_{S_\gamma} \bigabs{\bfF (\bfD\bfv) - \smash{\bfF \big(\Dhk \PiDG \bfv\big)}}^2
        \,\mathrm{d}x &\leq c\,  h^2\int_{S_\gamma}
        \bigabs{\nabla\bfF(\bfD\bfv)}^2 \,\mathrm{d}x\,,\\\label{cor:app_V.2}
          \bignorm{\bfF (\bfD\bfv) - \bfF \big(\Dhk \PiDG \bfv\big)}^2_2&\leq c\,h^2\,
        \norm{\nabla\bfF(\bfD\bfv)}^2_2\,.
      \end{align}
     \end{corollary}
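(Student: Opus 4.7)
The plan is to reduce Corollary \ref{cor:app_V} to the already established approximation results by writing
\begin{align*}
\bfF(\bfD \bfv)-\bfF\big(\Dhk \PiDG \bfv\big)
&=\big[\bfF(\bfD \bfv)-\bfF\big(\bfD_h \PiDG \bfv\big)\big]
 +\big[\bfF\big(\bfD_h \PiDG \bfv\big)-\bfF\big(\Dhk \PiDG \bfv\big)\big]\,,
\end{align*}
treating the ``volume part'' and the ``jump part'' separately on each patch $S_\gamma$.

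For the first bracket I would simply invoke Proposition \ref{prop:app_V} (estimate \eqref{prop:app_V.0.2}) applied to $\bfw_h=\bfv\in W^{1,p}_0(\Omega)\subseteq\WDG$, summing over the one or two simplices forming $S_\gamma$. This directly yields the desired $h^2\int_{S_\gamma}|\nabla\bfF(\bfD\bfv)|^2\,\mathrm dx$ bound.

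For the second bracket the key observation is that $\Dhk=[\Ghk]^{\textup{sym}}=[\nabla_h-\Rhk]^{\textup{sym}}$, so
\begin{align*}
\Dhk \PiDG \bfv -\bfD_h \PiDG \bfv=-\big[\Rhk \PiDG \bfv\big]^{\textup{sym}}=\big[\Rhk(\bfv-\PiDG \bfv)\big]^{\textup{sym}}\,,
\end{align*}
where I used that $\Rhk\bfv=0$ since $\bfv\in W^{1,p}_0(\Omega)$ has vanishing jumps on all faces (interior jumps vanish by continuity, boundary jumps by the zero trace). Proposition \ref{lem:hammer} then gives
\begin{align*}
\bigabs{\bfF\big(\bfD_h \PiDG \bfv\big)-\bfF\big(\Dhk \PiDG \bfv\big)}^2\sim \phi_{\abs{\bfD_h \PiDG \bfv}}\big(\bigabs{[\Rhk(\bfv-\PiDG\bfv)]^{\textup{sym}}}\big)\le \phi_{\abs{\bfD_h \PiDG \bfv}}\big(\bigabs{\Rhk(\bfv-\PiDG\bfv)}\big).
\end{align*}
To apply the face estimate \eqref{eq:e4.1} (which has shift $|\bfD\bfv|$), I would perform a shift change via Lemma \ref{lem:shift-change} to replace $\abs{\bfD_h \PiDG \bfv}$ by $\abs{\bfD \bfv}$, picking up an $\varepsilon\,|\bfF(\bfD_h\PiDG\bfv)-\bfF(\bfD\bfv)|^2$ term which is of the same type already controlled by the first bracket (and hence absorbable for $\varepsilon$ small). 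Integrating over $S_\gamma$ and using Proposition \ref{prop:e4} on the $\Rhk$-modular yields the $h^2\int_{S_\gamma}|\nabla\bfF(\bfD\bfv)|^2\,\mathrm dx$ bound.

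Combining the two brackets yields \eqref{cor:app_V.1}, and the global bound \eqref{cor:app_V.2} follows by summing over $\gamma\in\Gamma_h$, using finite overlap of the patches $\{S_\gamma\}$ controlled by the chunkiness $\omega_0$. The only mildly delicate point is the bookkeeping of the shift change: since $\Rhk(\bfv-\PiDG\bfv)$ restricted to an element $K\subseteq S_\gamma$ collects contributions from all faces of $K$, I would bound it by a finite sum of $\boldsymbol{\mathcal R}^k_{h,\gamma'}(\bfv-\PiDG\bfv)$ via convexity of $\phi_{\abs{\bfD\bfv}}$ and apply \eqref{eq:e4.1} face by face. No new estimates are required beyond Propositions \ref{prop:app_V}, \ref{prop:e4}, Lemma \ref{lem:shift-change}, and Lemma \ref{lem:hammer}.
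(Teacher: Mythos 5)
Your proposal is correct and takes essentially the same route as the paper: the paper's proof likewise splits off the volume part $\bfF(\bfD\bfv)-\bfF\big(\bfD_h\PiDG\bfv\big)$ (bounded via \eqref{prop:app_V.0.2}) and the jump contribution coming from $\Rhk(\bfv-\PiDG\bfv)$ using $\Rhk\bfv=\mathbf{0}$ (bounded via \eqref{eq:e4.1}), the only cosmetic difference being that the paper performs the splitting inside $\phi_{\abs{\bfD\bfv}}$ via \eqref{eq:hammera} and convexity, so that the shift $\abs{\bfD\bfv}$ is kept throughout and your extra shift change is not needed. Note also that the term $\varepsilon\,\abs{\bfF(\bfD_h\PiDG\bfv)-\bfF(\bfD\bfv)}^2$ produced by your shift change need not be ``absorbed'': it appears only on the right-hand side and is directly bounded by \eqref{prop:app_V.0.2}, so any fixed $\varepsilon$ suffices.
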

     
\begin{proof}
       Using that $\Dhk \PiDG \bfv=\bfD_h\PiDG
       \bfv+(\Rhk\PiDG\bfv)^{\textup {sym}}$ and $\Rhk\bfv
       =\mathbf{0}$ in $L^p(\Omega)$, \eqref{eq:hammera}, \eqref{prop:app_V.0.2} and \eqref{eq:e4.1}, we find that
       \begin{align}\label{cor:app_V.3}
            \begin{aligned}
                \int_{S_\gamma} \bigabs{\bfF (\bfD\bfv) - \bfF \big(\Dhk \PiDG \bfv\big)}^2
                \,\mathrm{d}x
                &\leq
                  c\int_{S_\gamma}{ \bigabs{\bfF (\bfD\bfv) - \bfF \big(\bfD_h\PiDG\bfv\big)}^2\,\textrm{d}x}
                  \\&\quad+c\int_{S_\gamma}{\phi_{\abs{{\bfD
          \bfv}}} \big(\bigabs{ \boldsymbol{\mathcal R}^k_{h,\gamma}
          (\bfv-\PiDG \bfv )}\big)\, \textrm{d}x}
      \\
      &\leq c\,  h^2\int_{S_\gamma}
                \abs{\nabla\bfF(\bfD\bfv)}^2 \,\mathrm{d}x\,.
            \end{aligned}    
       \end{align}
       Then, we conclude that \eqref{cor:app_V.2} applies via summation of \eqref{cor:app_V.3} with respect to $\gamma\in \Gamma_h$.
     \end{proof}

\begin{proposition}\label{prop:PiDGapproxmspecial}
	Let $\SSS$ satisfy Assumption~\ref{assum:extra_stress} with
        $p\in (1,\infty)$ and $\delta\ge 0$, and let $k \in \setN_0$. Then, there exists a constant $c>0$, depending only on the
    characteristics of ${\SSS}$, the chunkiness $\omega_0\!>\!0$, and
    $k \!\in\! \setN_0$ such that for every ${\bfv\!\in\!
      W^{1,p}(\Omega)}$ with ${\bfF(\bfD\bfv) \!\in\! W^{1,2}(\Omega)}$ and $\bfw_h\in W^{1,p}(\mathcal{T}_h)$, it holds
	\begin{align}\label{prop:PiDGapproxmspeciallocal}
	&\begin{aligned}
	&h\, \int_\gamma
        \phi_{\abs{\bfD\bfv}}\big(h^{-1}\abs{\jump{(\bfw_h - \PiDG
            \bfw_h)\otimes\bfn}}\big)\,\textup{d}s \\[-.5mm]
        &\leq c\, \int_{S_\gamma}{\phi_{\abs{\bfD\bfv}}\big(\vert     \nabla_h\bfw_h\vert\big)\,\textup{d}x}+c\,h^2\,\int_{S_\gamma}{\abs{\nabla\bfF(\bfD\bfv)}^2\,\textup{d}x} \,, 
	\end{aligned}\\&
          \label{prop:PiDGapproxmspeciallocal1}
          \begin{aligned}
        &h\, \int_\gamma   \phi_{\abs{\bfD\bfv}}\big(\abs{\avg{\bfw_h} - \avg{\PiDG
            \bfw_h}}\big)\,\textup{d}s\\[-.5mm]
        &\leq c\, \int_{S_\gamma}{\phi_{\abs{\bfD\bfv}}\big(h\,\vert
          \nabla_h\bfw_h\vert\big)\,\textup{d}x}+c\,h^2\,\int_{S_\gamma}{\abs{\nabla\bfF(\bfD\bfv)}^2\,\textup{d}x}
        \,, \\[-6mm]
        \end{aligned}
	\end{align}
	\begin{align}
          m_{\phi_{\abs{\bfD\bfv}},h}\big(\bfw_h - \PiDG\bfw_h\big)
          &\leq c\,\rho_{\phi_{\abs{\bfD\bfv}},\Omega}(\nabla_h\bfw_h)
            +c\,h^2\,\|\nabla\bfF(\bfD\bfv)\|_2^2\,.\label{prop:PiDGapproxmspecialglobal}
        \end{align}
        All assertions remain valid for $\bfw_h\in W^{1,p'}(\mathcal{T}_h)$ if we replace
        $\phi_{\abs{\bfD\bfv}}$ by $(\phi_{\abs{\bfD\bfv}})^*$.
\end{proposition}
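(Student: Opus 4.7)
The plan is to mirror the strategy used in Proposition~\ref{prop:e4}, namely, reduce everything to a constant shift via the shift change (Lemma~\ref{lem:shift-change}), then combine a scaled trace inequality for N-functions with standard (Orlicz-)stability and approximation properties of $\PiDG$ from \cite{kr-phi-ldg}, and finally undo the shift change, paying an error that is absorbed by Lemma~\ref{lem:poin_F}.

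First, I would fix $\gamma\in \Gamma_h$ and prove \eqref{prop:PiDGapproxmspeciallocal}. Applying Lemma~\ref{lem:shift-change} pointwise on $\gamma$ with $\bfA=\mean{\bfD\bfv}_{S_\gamma}$ (which is constant) and $\bfB=\bfD\bfv$, together with the scaled trace inequality applied to $\bfF(\bfD\bfv)-\bfF(\mean{\bfD\bfv}_{S_\gamma})\in W^{1,2}(S_\gamma)$ and Lemma~\ref{lem:poin_F}, yields
\begin{align*}
    h\!\int_\gamma \!\phi_{\abs{\bfD\bfv}}\big(h^{-1}\abs{\jump{(\bfw_h\!-\!\PiDG\bfw_h)\otimes\bfn}}\big)\,\textup{d}s
    &\leq c\,h\!\int_\gamma \!\phi_{\abs{\mean{\bfD\bfv}_{S_\gamma}}}\big(h^{-1}\abs{\jump{(\bfw_h\!-\!\PiDG\bfw_h)\otimes\bfn}}\big)\,\textup{d}s\\[-.3mm]
    &\quad + c\,h^2\int_{S_\gamma}\abs{\nabla\bfF(\bfD\bfv)}^2\,\textup{d}x.
\end{align*}
For the first term on the right-hand side, the shift is now constant on $S_\gamma$, so a scaled trace inequality for N-functions together with the approximation property of $\PiDG$ (cf.~\cite[Cor.~A.19]{kr-phi-ldg}) and its Orlicz-stability (cf.~\cite[Lemma~A.5]{kr-phi-ldg}) give
\begin{align*}
    h\int_\gamma \phi_{\abs{\mean{\bfD\bfv}_{S_\gamma}}}\big(h^{-1}\abs{\jump{(\bfw_h-\PiDG\bfw_h)\otimes\bfn}}\big)\,\textup{d}s
    &\leq c\int_{S_\gamma}\phi_{\abs{\mean{\bfD\bfv}_{S_\gamma}}}(\abs{\nabla_h\bfw_h})\,\textup{d}x.
\end{align*}
A final application of Lemma~\ref{lem:shift-change} reverts the shift to $\abs{\bfD\bfv}$, producing once more an error controlled by $c\,h^2\int_{S_\gamma}\abs{\nabla\bfF(\bfD\bfv)}^2\,\textup{d}x$ via Lemma~\ref{lem:poin_F}, which is exactly \eqref{prop:PiDGapproxmspeciallocal}.

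The estimate \eqref{prop:PiDGapproxmspeciallocal1} is handled identically, but because the average $\avg{\bfw_h-\PiDG\bfw_h}$ contains no $h^{-1}$ scaling, the scaled trace inequality and a Poincar\'e estimate on $S_\gamma$ produce a factor $h$ inside $\phi_{\abs{\bfD\bfv}}$, thus giving the argument $h\,\abs{\nabla_h\bfw_h}$ on the right. The global bound \eqref{prop:PiDGapproxmspecialglobal} then follows by summing \eqref{prop:PiDGapproxmspeciallocal} over all $\gamma\in\Gamma_h$, using that each simplex is contained in only a bounded number of patches $S_\gamma$ (by the chunkiness assumption). The version with $(\phi_{\abs{\bfD\bfv}})^*$ is obtained verbatim from the dual statements in Lemma~\ref{lem:shift-change} and the Orlicz-stability of $\PiDG$ applied to the conjugate N-function, together with \eqref{eq:F-F*3}.

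The main technical obstacle is the justification of the pointwise shift change \emph{on the face} $\gamma$: one must interpret $\abs{\bfD\bfv}$ as a trace, which is admissible since $\bfF(\bfD\bfv)\in W^{1,2}(\Omega)$ (and hence $\bfD\bfv$ has a well-defined trace at least on $\gamma\subset\partial S_\gamma$). The patch $S_\gamma$ is precisely the correct domain for this operation: it is small enough for Lemma~\ref{lem:poin_F} to deliver the factor $h^2$, while being large enough that both the local Orlicz-stability of $\PiDG$ and the scaled trace inequality apply without ambiguity.
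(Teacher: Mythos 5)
Your proposal is correct and follows essentially the same route as the paper's proof: a shift change to a piecewise-constant shift, a scaled trace inequality combined with the stability/approximation properties of $\PiDG$, a second shift change back, and Lemma~\ref{lem:poin_F} to absorb the shift-change errors into $h^2\,\|\nabla\bfF(\bfD\bfv)\|_2^2$, followed by summation over $\gamma\in\Gamma_h$. The only cosmetic deviation is that you shift to $\mean{\bfD\bfv}_{S_\gamma}$ on the whole patch while the paper uses the element-wise means $\mean{\bfD\bfv}_K$, $K\subseteq S_\gamma$, after splitting the jump by convexity; both are covered by Lemma~\ref{lem:poin_F}.
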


\begin{proof}
	Using the convexity of $\phi_{\abs{\bfD\bfv}}$, twice a shift change from Lemma \ref{lem:shift-change}, the local trace inequalities \cite[(A.20)]{kr-phi-ldg} and Lemma \ref{lem:poin_F}, for~every~${\gamma\in \Gamma_h}$, 
	we~find~that
	\begin{align}\label{prop:PiDGapproxmspecial.1}
	\begin{aligned}
		&h \int_\gamma \phi_{\abs{\bfD\bfv}}\big(h^{-1}\abs{\jump{(\bfw_h - \PiDG \bfw_h)\otimes\bfn}}\big)\,\textup{d}s \\&\quad\leq h\sum_{K\in \mathcal{T}_h;K\subseteq S_\gamma}{\!\int_\gamma{\phi_{\abs{\mean{\bfD\bfv}_K}}\big(h^{-1} \bigabs{ \bfw_h \!-\! (\PiDG \bfw_h)|_K}\big)\!+\!\bigabs{\bfF(\bfD\bfv) \!-\!\bfF\big(\mean{\bfD\bfv}_K\big)}^2\,\textup{d}s}}
		\\&
		\quad\leq c\,\sum_{K\in \mathcal{T}_h;K\subseteq S_\gamma}{\int_K{\phi_{\abs{\mean{\bfD\bfv}_K}}(\vert \nabla\bfw_h\vert)+h^2\,\abs{\nabla\bfF(\bfD\bfv)}^2\,\textup{d}x}}
		\\&\quad\leq \int_{S_\gamma}{\phi_{\abs{\bfD\bfv}}(\vert \nabla_h\bfw_h\vert)\,\textup{d}x}+h^2\int_{S_\gamma}{\abs{\nabla\bfF(\bfD\bfv)}^2\,\textup{d}x}\,,
			\end{aligned}\hspace{-5mm}
	\end{align}
	i.e., \eqref{prop:PiDGapproxmspeciallocal}.  Then,
        \eqref{prop:PiDGapproxmspecialglobal} follows from
        \eqref{prop:PiDGapproxmspeciallocal} via summation with
        respect to $\gamma\in
        \Gamma_h$. The estimate \eqref{prop:PiDGapproxmspeciallocal1} is proved
        analogously to \eqref{prop:PiDGapproxmspeciallocal}.
\end{proof}
\begin{corollary}
 \label{cor:PiDGapproxmspecial}
	Let $\SSS$ satisfy Assumption~\ref{assum:extra_stress} with
        $p\in (1,\infty)$ and $\delta\ge 0$, and let $k \in \setN$. Then, there exists a constant $c>0$, depending only on the
    characteristics~of~${\SSS}$, the chunkiness $\omega_0\!>\!0$, and $k\! \in\! \setN$ such that for every $\bfv\!\in\! W^{1,p}(\Omega) $~with~${\bfF(\bfD\bfv)\! \in\! W^{1,2}(\Omega)}$, it holds
	\begin{align}
		\label{cor:PiDGapproxmspeciallocal}
	    h \int_\gamma \phi_{\abs{\bfD\bfv}}\big(h^{-1}\abs{\jump{(\bfv - \PiDG     \bfv)\otimes\bfn}}\big)\,\textup{d}s&\leq c\, h^2\int_{S_\gamma}{\abs{\nabla\bfF(\bfD\bfv)}^2\,\textup{d}s}    \,,\\
		m_{\phi_{\abs{\bfD\bfv}},h}\big(\bfv - \PiDG \bfv\big) &\leq c\,h^2\,\|\nabla\bfF(\bfD\bfv)\|_2^2\,.\label{cor:PiDGapproxmspecialglobal}
	\end{align}
\end{corollary}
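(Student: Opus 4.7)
The strategy is to reduce to Proposition~\ref{prop:PiDGapproxmspecial} by subtracting off, on each patch $S_\gamma$, a carefully chosen affine function $\mathfrak{p}_h\in V_h^1$, and then to convert the resulting volume term into an $\bfF$-oscillation that Lemma~\ref{lem:poin_F} can handle. Since $k\ge 1$, $\mathfrak{p}_h\in V_h^k$ so $\PiDG\mathfrak{p}_h=\mathfrak{p}_h$; consequently $\bfv-\PiDG\bfv=(\bfv-\mathfrak{p}_h)-\PiDG(\bfv-\mathfrak{p}_h)$ as $L^p$-functions, and hence $\jump{(\bfv-\PiDG\bfv)\otimes\bfn}=\jump{((\bfv-\mathfrak{p}_h)-\PiDG(\bfv-\mathfrak{p}_h))\otimes\bfn}$ on $\gamma$.

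The choice of $\mathfrak{p}_h$ is the decisive point: a naive prescription $\nabla\mathfrak{p}_h=\mean{\nabla\bfv}_{S_\gamma}$ would leave a full-gradient oscillation that cannot be controlled by $\nabla\bfF(\bfD\bfv)$, since $\bfF$ only sees the symmetric part. Instead, I impose $\bfD\mathfrak{p}_h\equiv\mean{\bfD\bfv}_{S_\gamma}$ and select the skew-symmetric part of $\nabla\mathfrak{p}_h$ via the Orlicz Korn inequality \cite[Theorem~6.13]{john} applied with the shifted N-function $\phi_{\abs{\mean{\bfD\bfv}_{S_\gamma}}}$ (whose characteristics are independent of the shift, cf.~Remark~\ref{rem:sa}), so that
\begin{align*}
\int_{S_\gamma}\phi_{\abs{\mean{\bfD\bfv}_{S_\gamma}}}\bigl(\abs{\nabla\bfv-\nabla\mathfrak{p}_h}\bigr)\,\textup{d}x
\leq c\int_{S_\gamma}\phi_{\abs{\mean{\bfD\bfv}_{S_\gamma}}}\bigl(\abs{\bfD\bfv-\mean{\bfD\bfv}_{S_\gamma}}\bigr)\,\textup{d}x\,.
\end{align*}

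With this $\mathfrak{p}_h$, applying \eqref{prop:PiDGapproxmspeciallocal} to $\bfw_h\coloneqq\bfv-\mathfrak{p}_h$ yields
\begin{align*}
h\int_\gamma\phi_{\abs{\bfD\bfv}}\bigl(h^{-1}\abs{\jump{(\bfv-\PiDG\bfv)\otimes\bfn}}\bigr)\,\textup{d}s
\leq c\int_{S_\gamma}\phi_{\abs{\bfD\bfv}}\bigl(\abs{\nabla\bfv-\nabla\mathfrak{p}_h}\bigr)\,\textup{d}x+c\,h^2\int_{S_\gamma}\abs{\nabla\bfF(\bfD\bfv)}^2\,\textup{d}x\,.
\end{align*}
A pointwise shift change (Lemma~\ref{lem:shift-change}) then replaces $\phi_{\abs{\bfD\bfv}}$ by $\phi_{\abs{\mean{\bfD\bfv}_{S_\gamma}}}$ in the remaining volume integral, modulo an error $c\,\abs{\bfF(\bfD\bfv)-\bfF(\mean{\bfD\bfv}_{S_\gamma})}^2$; the shifted modular is absorbed via the Korn estimate above; Lemma~\ref{lem:hammer} identifies $\phi_{\abs{\mean{\bfD\bfv}_{S_\gamma}}}(\abs{\bfD\bfv-\mean{\bfD\bfv}_{S_\gamma}})\sim\abs{\bfF(\bfD\bfv)-\bfF(\mean{\bfD\bfv}_{S_\gamma})}^2$; and Lemma~\ref{lem:poin_F} on $S_\gamma$ bounds this $\bfF$-oscillation by $c\,h^2\int_{S_\gamma}\abs{\nabla\bfF(\bfD\bfv)}^2\,\textup{d}x$. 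This proves \eqref{cor:PiDGapproxmspeciallocal}, and \eqref{cor:PiDGapproxmspecialglobal} follows by summation over $\gamma\in\Gamma_h$ using the uniformly bounded overlap of the family $\{S_\gamma\}_{\gamma\in\Gamma_h}$. The main obstacle is precisely this symmetric-gradient-aware Korn step; the rest of the computation is the standard shift-change machinery already used in the proofs of Propositions~\ref{prop:app_V} and~\ref{prop:e4}.
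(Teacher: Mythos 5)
Your proof is correct and follows essentially the same route as the paper: the paper likewise invokes Proposition~\ref{prop:PiDGapproxmspecial} (with $\bfw_h\coloneqq\bfv-\PiDG\bfv$, so the jump term is identical) and then controls the volume term by a shift change together with \eqref{eq:e4.4}, which is exactly your affine-subtraction/Orlicz--Korn/Lemma~\ref{lem:poin_F} argument, followed by summation over $\gamma\in\Gamma_h$. One small inaccuracy in your commentary: the prescription $\nabla\mathfrak{p}_h=\mean{\nabla\bfv}_{S_\gamma}$ is not ``naive'' and does not fail --- it is precisely the paper's choice, since subtracting the full gradient mean makes the skew part mean-free so that \cite[Theorem~6.13]{john} reduces the full-gradient oscillation to the symmetric one, which is effectively the same selection of the skew part that you make.
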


\begin{proof}
  We use Proposition \ref{prop:PiDGapproxmspecial} with
  $\bfw_h\coloneqq \bfv - \PiDG \bfv\in \WDG$ and treat the
  resulting term
  $\smash{\int_{S_\gamma}{\!\phi_{\abs{\bfD\bfv}}(\vert \nabla_h(\bfv - \PiDG
    \bfv)\vert)\,\textup{d}x}}$ with a shift change in Lemma
  \ref{lem:shift-change}~and~\eqref{eq:e4.4}. This proves
  \eqref{cor:PiDGapproxmspeciallocal}. Then,
  \eqref{cor:PiDGapproxmspecialglobal} follows from
  \eqref{cor:PiDGapproxmspeciallocal} via summation~with respect to ${\gamma\in \Gamma_h}$.
\end{proof}

\begin{lemma}\label{lem:e5}
	Let $\SSS$ satisfy Assumption~\ref{assum:extra_stress}  with
        $p\in (1,\infty)$ and $\delta\ge 0$. Then, there exists a constant $c>0$, depending only on the
    characteristics of ${\SSS}$ and the chunkiness $\omega_0\hspace{-0.075em}>\hspace{-0.075em}0$, such that for every $\bfX\hspace{-0.075em}\in\hspace{-0.075em} L^p(\Omega)$ and $\bfv \hspace{-0.075em}\in\hspace{-0.075em} W^{1,p}(\Omega)$~with~${\bfF(\bfD \bfv)\hspace{-0.075em}\in \hspace{-0.075em} W^{1,2}(\Omega)}$,~it~holds
	\begin{align}
		\label{lem:e5.1}
		h\,\rho_{\phi_{\abs{\bfD\bfv}},\Gamma_h}\big(\vert \bfD\bfv\vert -\avg{\abs{\Pia \bfX}}\big)
		&\le  c\, h^2\, \norm{\nabla \bfF(\bfD\bfv)}_2^2  +c\,\norm{\bfF(\bfD \bfv)-\bfF(\bfX)}^2_2\,.
	\end{align}
\end{lemma}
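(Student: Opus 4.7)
}
My plan is to reduce the estimate to a per-face bound and then sum over $\Gamma_h$. Fix $\gamma\in\Gamma_h$ with neighbouring element(s) $K\in\{K^-,K^+\}$ (or a single $K$ for boundary faces), and write on $\gamma$
\begin{align*}
  \abs{\bfD\bfv}-\avg{\abs{\Pia\bfX}}=\tfrac12\sum_{\pm}\bigl(\abs{\bfD\bfv}-\abs{\Pia\bfX|_{K^\pm}}\bigr).
\end{align*}
By convexity of $\phi_{\abs{\bfD\bfv}}$ and the reverse triangle inequality $\abs{\abs{\bfD\bfv}-\abs{\Pia\bfX|_K}}\le\abs{\bfD\bfv-\Pia\bfX|_K}$, it then suffices to bound, for a single neighbour $K$,
\begin{align*}
  h\int_\gamma \phi_{\abs{\bfD\bfv}}\bigl(\abs{\bfD\bfv-\Pia\bfX|_K}\bigr)\,\mathrm ds.
\end{align*}
Inserting the pivot $\Pia\bfD\bfv|_K=\mean{\bfD\bfv}_K$ and using convexity again splits the integrand into an oscillation term $\phi_{\abs{\bfD\bfv}}(\abs{\bfD\bfv-\mean{\bfD\bfv}_K})$ and a comparison term $\phi_{\abs{\bfD\bfv}}(\abs{\mean{\bfD\bfv-\bfX}_K})$.

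For both pieces I would perform a shift change (Lemma~\ref{lem:shift-change}) to replace $\phi_{\abs{\bfD\bfv}}$ by $\phi_{\abs{\mean{\bfD\bfv}_K}}$, absorbing the pointwise error $\abs{\bfF(\bfD\bfv)-\bfF(\mean{\bfD\bfv}_K)}^2$. For the oscillation term, Lemma~\ref{lem:hammer} (see Remark~\ref{rem:sa}) yields $\phi_{\abs{\mean{\bfD\bfv}_K}}(\abs{\bfD\bfv-\mean{\bfD\bfv}_K})\sim\abs{\bfF(\bfD\bfv)-\bfF(\mean{\bfD\bfv}_K)}^2$, and the $L^2$--trace inequality followed by Lemma~\ref{lem:poin_F} gives
\begin{align*}
  h\int_\gamma\abs{\bfF(\bfD\bfv)-\bfF(\mean{\bfD\bfv}_K)}^2\,\mathrm ds\le c\,h^2\int_K\abs{\nabla\bfF(\bfD\bfv)}^2\,\mathrm dx.
\end{align*}
For the comparison term, the integrand on $\gamma$ is the constant $\phi_{\abs{\mean{\bfD\bfv}_K}}(\abs{\mean{\bfD\bfv-\bfX}_K})$, so $h\abs{\gamma}\sim\abs{K}$ converts the face integral into $\int_K\phi_{\abs{\mean{\bfD\bfv}_K}}(\abs{\mean{\bfD\bfv-\bfX}_K})\,\mathrm dx$; Jensen's inequality pushes the average inside to give $\le\int_K\phi_{\abs{\mean{\bfD\bfv}_K}}(\abs{\bfD\bfv-\bfX})\,\mathrm dx$. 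A second shift change back to $\phi_{\abs{\bfD\bfv}}$ together with $\phi_{\abs{\bfD\bfv}}(\abs{\bfD\bfv-\bfX})\sim\abs{\bfF(\bfD\bfv)-\bfF(\bfX)}^2$ (Lemma~\ref{lem:hammer}) and Lemma~\ref{lem:poin_F} yields
\begin{align*}
  \int_K\phi_{\abs{\mean{\bfD\bfv}_K}}(\abs{\bfD\bfv-\bfX})\,\mathrm dx\le c\int_K\abs{\bfF(\bfD\bfv)-\bfF(\bfX)}^2\,\mathrm dx+c\,h^2\int_K\abs{\nabla\bfF(\bfD\bfv)}^2\,\mathrm dx.
\end{align*}

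Summing the per-face bounds over $\gamma\in\Gamma_h$ and using that, by shape regularity, each $K\in\mathcal T_h$ is hit by at most $d+1$ faces then produces the claim \eqref{lem:e5.1}; boundary faces are treated identically with $\avg{\abs{\Pia\bfX}}_\gamma=\abs{\Pia\bfX|_K}$ for the unique adjacent element. The main technical nuisance I anticipate is organising the repeated shift changes so that all the absorbed $\abs{\bfF(\bfD\bfv)-\bfF(\mean{\bfD\bfv}_K)}^2$ remainders can be simultaneously controlled through Lemma~\ref{lem:poin_F} by the single term $h^2\norm{\nabla\bfF(\bfD\bfv)}_2^2$, but since only finitely many shift changes are performed on each $K$ with fixed reference $\mean{\bfD\bfv}_K$, the constants remain uniform in $h$ and depend only on the characteristics of $\SSS$ and $\omega_0$.
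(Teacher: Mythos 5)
Your argument is correct and follows essentially the same route as the paper: both proofs reduce, via the reverse triangle inequality and convexity, to $\phi_{\abs{\bfD\bfv}}\big(\abs{\bfD\bfv-\Pia\bfX}\big)$ on each face, pass to the adjacent element with a trace inequality, and control the resulting terms with shift changes (Lemma~\ref{lem:shift-change}), the equivalences of Proposition~\ref{lem:hammer}, Lemma~\ref{lem:poin_F}, and summation over $\Gamma_h$. The only organizational difference is that where you insert the pivot $\mean{\bfD\bfv}_K$ and treat the constant part by Jensen's inequality, the paper instead keeps $\bfF(\Pia\bfX)$ and invokes Lemma~\ref{lem:e7} with $j=0$, whose proof contains precisely this Jensen/Orlicz-stability step, so you have effectively inlined that lemma.
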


\begin{proof}
	Using that $\vert \vert \bfD\bfv\vert -\avg{\abs{\Pia \bfX}}\vert\leq\avg{\vert\bfD\bfv -\Pia \bfX\vert}$ in $\Gamma_h$, 
	the convexity of $\phi_{\abs{\bfD\bfv}}$, \eqref{eq:hammera} and the trace inequality \cite[(A.17)]{kr-phi-ldg}, for every $\gamma\in \Gamma_h$, we find that
	\begin{align}\label{lem:e5.2}
	    \begin{aligned}
	        &h\int_{\gamma}{\phi_{\abs{\bfD\bfv}}\big(\bigabs{\vert \bfD\bfv\vert -\avg{\abs{\Pia     \bfX}}}\big)\,\mathrm{d}s}\leq c\,h\int_{\gamma}{\phi_{\abs{\bfD\bfv}}\big(\avg{\vert\bfD\bfv -\Pia     \bfX\vert}\big)\,\mathrm{d}s}\\&\quad\leq 
		     c\,h\sum_{K\in \mathcal{T}_h;K\subseteq S_\gamma}{\int_{\gamma}{\phi_{\abs{\bfD\bfv}}\big(\bigabs{\bfD\bfv    -(\Pia \bfX)|_K}\big)	\,\mathrm{d}s}}
		     \\&\quad\leq
		     c\,h\sum_{K\in \mathcal{T}_h;K\subseteq S_\gamma}{\int_{\gamma}{\bigabs{\bfF(\bfD\bfv) -\bfF\big((\Pia    \bfX)|_K\big)}^2	\,\mathrm{d}s}} 
		     \\&\quad\leq
		     c\,h^2 \,\int_{S_\gamma}{\abs{\nabla\bfF(\bfD\bfv)}^2	   \,\mathrm{d}x}+c\,\int_{S_\gamma}{\bigabs{\bfF(\bfD\bfv) -\bfF\big(\Pia \bfX\big)}^2	\,\mathrm{d}x}\,. 
		 \end{aligned}
	\end{align}
	Then, the assertion follows via summation of \eqref{lem:e5.2} with respect to $\gamma\in \Gamma_h$~and~resorting to Lemma~\ref{lem:e7} with $j=0$.
\end{proof}
    
Now we have prepared everything to prove our main result Theorem \ref{thm:error}.    
\begin{proof}[Proof of Theorem \ref{thm:error}]
  Setting $\bfe_h\coloneqq \bfv_h-\bfv\in \WDG$ and  resorting~to
  ${\SSS_a(\bfA)\colon \!\bfA}$ $\sim\varphi_a(\vert \bfA^{\textup{sym}}\vert)$ uniformly in $\bfA\in \mathbb{R}^{d\times d}$ and $a\ge 0$, which follows from \eqref{eq:hammera} with $\bfB=\mathbf{0}$ (cf.~Remark \ref{rem:sa}), as well as
  $\bfe_h = \PiDG \bfe_h + \PiDG \bfv -\bfv$ and
  $\jump{\bfv\otimes \bfn}=\bfzero$ on $\Gamma_h$, we find that
  \begin{align}\label{thm:error.1}
        \begin{aligned}
              &c\,\bignorm{\smash{\bfF\big(\Dhk \bfv_h\big)} - \bfF(\bfD
                \bfv)}_2^2+c\,\alpha\,m_{\phi_{\smash{\sssl}},h } (\bfe_h)\\&\quad\leq \bighskp{\SSS\big(\Dhk \bfv_h\big) - \SSS(\bfD
              \bfv)}{\Dhk \bfe_h}
            \\
            &\quad\quad+ \alpha \big\langle\SSS_{\smash{\sssl}}\big(h^{-1} \jump{\bfe_h\otimes
                \bfn}\big), \jump{\bfe_h \otimes \bfn}\big\rangle_{\Gamma_h}
              \\
              &\quad= \bighskp{\SSS\big(\Dhk \bfv_h\big) - \SSS(\bfD
              \bfv)}{\Dhk \PiDG\bfe_h}
            \\&\quad\quad+ \alpha \big\langle\SSS_{\smash{\sssl}}\big(h^{-1} \jump{\bfv_h\otimes
                \bfn}\big), \jump{\PiDG\bfe_h \otimes \bfn}\big\rangle_{\Gamma_h}
                \\&\quad\quad+ \bighskp{\SSS\big(\Dhk \bfv_h\big) - \SSS(\bfD
              \bfv)}{\Dhk (\PiDG\bfv-\bfv)}
            \\&\quad\quad+ \alpha \big\langle\SSS_{\smash{\sssl}}\big(h^{-1} \jump{\bfe_h\otimes
                \bfn}\big), \jump{(\PiDG\bfv-\bfv) \otimes \bfn}\big\rangle_{\Gamma_h}
                \\&\quad=\vcentcolon I_1+\alpha I_2 + I_3 + \alpha I_4\,.
        \end{aligned}
  \end{align}
  Using $\Dhk\bfv =\bfD\bfv$, the $\varepsilon$-Young inequality \eqref{ineq:young} with $\psi=\varphi_{\vert \bfD\bfv\vert}$, two equivalences in \eqref{eq:hammera},  and
  Corollary~\ref{cor:app_V}~yield
  \begin{align}\label{thm:error.2}
        \begin{aligned}
            I_3&\leq \varepsilon\,\bignorm{\smash{\bfF\big(\Dhk \bfv_h\big)}-\bfF(\bfD
            \bfv)}_2^2+c_\varepsilon\, \bignorm{\bfF(\bfD\bfv)-\smash{\bfF\big(\Dhk\PiDG\bfv\big)}}_2^2\\&\leq
            \varepsilon\,\bignorm{\smash{\bfF\big(\Dhk \bfv_h\big)}-\bfF(\bfD
            \bfv)}_2^2+c_\varepsilon \,h^2\,\norm{\nabla\bfF(\bfD\bfv)}_2^2\,.
        \end{aligned}
  \end{align}
  Using the $\varepsilon$-Young inequality \eqref{ineq:young}
  with ${\psi\!=\!\phi_{\smash{\sssl}}}$, that uniformly in $\bfA\!\in\!
  \mathbb{R}^{d\times d}$ and $a\!\ge\! 0$ there holds $(\varphi_a)^*(\abs{\SSS_a(\bfA)})\! \sim\!\varphi_a(\vert \bfA^{\textup{sym}}\vert)$, which follows from \eqref{eq:hammera} with ${\bfB=\mathbf{0}}$ (cf.~Remark \ref{rem:sa}),  a shift change in Lemma
  \ref{lem:shift-change}, Lemma~\ref{lem:e5}~and~Corollary~\ref{cor:PiDGapproxmspecial}~yield
  \begin{align}\label{thm:error.3}
        \begin{aligned}
            I_4&\leq \varepsilon \,m_{\phi_{\smash{\sssl}},h } (\bfe_h)+c_\varepsilon\,m_{\phi_{\smash{\sssl}},h } \big(\bfv-\PiDG\bfv\big)
            \\&\leq \varepsilon \,m_{\phi_{\smash{\sssl}},h } (\bfe_h)+c_\varepsilon\,c_\kappa\,m_{\phi_{\abs{\bfD\bfv}},h } \big(\bfv-\PiDG\bfv\big)       \\&\quad+c_\varepsilon\,\kappa\,h\,\rho_{\phi_{\abs{\bfD\bfv}},\Gamma_h}\big({ \abs{\bfD\bfv}-\sssl} \big)
            \\&\leq \varepsilon \,m_{\phi_{\smash{\sssl}},h } (\bfe_h)+c_\varepsilon\,(c_\kappa+\kappa)\,h^2\,\norm{\nabla\bfF(\bfD\bfv)}^2_2       \\&\quad+c_\varepsilon\,\kappa\,\bignorm{\smash{\bfF\big(\Dhk \bfv_h\big)}-\bfF(\bfD
            \bfv)}_2^2
            \,.
        \end{aligned}
  \end{align}
In view of our assumption Lemma \ref{lem:pres} yields  that
\eqref{eq:errorprimal} is applicable. Thus, 
for every $z_h\in \Qhkc$, we have that
    \begin{align}\label{thm:error.4}
        \begin{aligned}
            I_1+\alpha I_2&=\bighskp{z_h-q}{\Divhk \PiDG\bfe_h}\\&\quad+b_h\big(\bfv,\bfv,\PiDG\bfe_h\big)-b_h\big(\bfv_h,\bfv_h,\PiDG\bfe_h\big)
            \\&\quad+\bigskp{\avg{\bfS}-\bigavg{\PiDG\bfS}}{\jump{\PiDG\bfe_h\otimes
            \bfn}}_{\Gamma_h}\\&\quad+
            \tfrac{1}{2}\bigskp{\bigavg{\PiDG\bfK}-\avg{\bfK}}{\jump{\PiDG\bfe_h\otimes
            \bfn}}_{\Gamma_h}
            \\&\quad+
            \bigskp{\bigavg{\PiDG(q\mathbf{I}_d)}-\avg{q\mathbf{I}_d}}{\jump{\PiDG\bfe_h\otimes
            \bfn}}_{\Gamma_h}
            \\&=\vcentcolon J_1+\dots + J_5 \,.
          \end{aligned}
    \end{align}
    So, let us next estimate $J_1,\dots,J_5$:
    
    (\textit{$J_1$}): Using $\PiDG \bfe_h=\bfv -\PiDG\bfv +
    (\bfv_h-\bfv)$, $\Dhk \bfv =\bfD\bfv$, the $\varepsilon$-Young inequality~\eqref{ineq:young} for 
  $\psi =\phi_{\abs{\bfD \bfv}}$, \eqref{eq:hammera} and Corollary~\ref{cor:app_V}, for every $z_h\in \Qhkc$, we find that
    \begin{align}\label{thm:error.5}
        \begin{aligned}
            J_1&=\bighskp{(z_h-q)\mathbf{I}_d}{\bfD\bfv -\Dhk \PiDG\bfv}-\bighskp{(z_h-q)\mathbf{I}_d}{\bfD\bfv-\Dhk \bfv_h}
            \\&\leq c\,\rho_{(\varphi_{\abs{\bfD\bfv}})^*,\Omega}({q-z_h})+c\,\bignorm{\bfF(\bfD\bfv)-\smash{\bfF\big(\Dhk\PiDG\bfv\big)}}_2^2\\&\quad+c_\varepsilon\,\,\rho    _{(\varphi_{\abs{\bfD\bfv}})^*,\Omega}({q-z_h})+
            \varepsilon\, \bignorm{\bfF(\bfD\bfv)-\smash{\bfF\big(\Dhk\bfv_h\big)}}_2^2
            \\&\leq c_\varepsilon\,\rho_{(\varphi_{\abs{\bfD\bfv}})^*,\Omega}({q-z_h})+c\,h^2\,\norm{\nabla\bfF(\bfD\bfv)}_2^2+\varepsilon\,
            \bignorm{\bfF(\bfD\bfv)-\smash{\bfF\big(\Dhk\bfv_h\big)}}_2^2\,.\hspace*{-4mm}
        \end{aligned}
    \end{align}
    Next,  denote by $\Pi_h^{\smash{Q},k}:L^{p'}(\Omega)\to \Qhkc$, the Clem\'ent quasi-interpolation operator,~cf.~\cite{BF1991}. Then, 
    using twice a shift change in Lemma \ref{lem:shift-change},
    \cite[Lemma 5.2]{bdr-phi-stokes} with
    $\psi=(\phi_{\abs{\mean{\bfD\bfv}_{S_K}}})^*$, where
    $\smash{S_K=\bigcup\{K'\!\in\! \mathcal{T}_h\mid \partial
      K'\!\cap\! \partial K\}}$, and Lemma \ref{lem:poin_F} for $S_K$,
    we obtain
    \begin{align}\label{thm:error.6}
    \begin{aligned}
      \rho_{(\varphi_{\abs{\bfD\bfv}})^*,\Omega}\big({q-\Pi_h^{\smash{Q},k}
      q}\big)
      &\leq c\, \rho_{(\varphi_{\abs{\bfD\bfv}})^*,\Omega}(h\,{\nabla q})+c\,h^2\,\norm{\nabla\bfF(\bfD\bfv)}_2^2\,.
        \end{aligned}
    \end{align}
    {We also used that $(\varphi_a)^*(t) \le \varphi^*(t)$ for all
    $t,a\ge0$, valid~for~${p>2}$  (cf.~\cite{bdr-phi-stokes}), which
    together with Lemma \ref{lem:pres} yields that the modular in
    \eqref{thm:error.6} is finite. }
    Thus, choosing $z_h =\Pi_h^{\smash{Q},k} q\in \Qhkc$ in \eqref{thm:error.5} and resorting to \eqref{thm:error.6}, we deduce that
       \begin{align}\label{thm:error.5a}
        \begin{aligned}
            \abs{J_1}&\leq 
        c_\varepsilon\, \rho_{(\varphi_{\abs{\bfD\bfv}})^*,\Omega}(h\,{\nabla q})
      +c_\varepsilon\,h^2\,\norm{\nabla\bfF(\bfD\bfv)}_2^2
      +\varepsilon\,
            \bignorm{\bfF(\bfD\bfv)-\smash{\bfF\big(\Dhk\bfv_h\big)}}_2^2\,.
        \end{aligned}
    \end{align}
    
    (\textit{$J_2$}): By definition, we have that $b_h(\bfv_h,\PiDG\bfe_h,\PiDG\bfe_h)=0$, which yields 
    \begin{align}\label{thm:error.7}
        \begin{aligned}
            J_2&= b_h(\bfv,\bfv,\PiDG\bfe_h)\pm b_h(\bfv,\PiDG\bfv,\PiDG\bfe_h)\\&\quad-b_h(\bfv_h,\bfv_h,\PiDG\bfe_h)\pm     b_h(\bfv_h,\PiDG\bfv,\PiDG\bfe_h)
            \\
            &= b_h(\bfv,\bfv-\PiDG\bfv,\PiDG\bfe_h)+ b_h(\bfv,\PiDG\bfv,\PiDG\bfe_h)\\&\quad-b_h(\bfv_h,\PiDG\bfe_h,\PiDG\bfe_h)-     b_h(\bfv_h,\PiDG\bfv,\PiDG\bfe_h)
            \\
            &= b_h(\bfv,\bfv-\PiDG\bfv,\PiDG\bfe_h)- b_h(\bfe_h,\PiDG\bfv,\PiDG\bfe_h)
            =\vcentcolon J_{2,1}-J_{2,2}\,.
        \end{aligned}
    \end{align}
    In addition, we have that
    \begin{align}\label{thm:error.8}
        \begin{aligned}
            2 J_{2,1}\hspace{-0.1em}=\hspace{-0.1em}\bighskp{\PiDG\bfe_h\otimes \bfv}{\Ghk(\bfv\hspace{-0.1em}-\hspace{-0.1em}\PiDG\bfv)}\hspace{-0.1em}-\hspace{-0.1em}\bighskp{(\bfv-\PiDG\bfv)\otimes \bfv}{\Ghk\PiDG\bfe_h}\hspace{-0.1em}=\vcentcolon \hspace{-0.1em}    J_{2,1}^1\hspace{-0.1em}+\hspace{-0.1em}J_{2,1}^2\,.
        \end{aligned}
    \end{align}
    In the proof of Lemma \ref{lem:pres} we showed that $\bfv\in
    W^{2,2}(\Omega) \vnor L^\infty(\Omega)$.~Thus,  the
    $\varepsilon$-Young inequality \eqref{ineq:young}~with~$\smash{\psi=\frac{1}{2}\abs{\cdot}^2}$; the
    $L^2$-stability~of~$\PiDG$ (cf.~\cite[Corollary~A.8]{kr-phi-ldg}),
    the Poincar\'e inequality \cite[Lemma A.34]{kr-phi-ldg},
    \eqref{eq:eqiv0}; the approximation property of~$\PiDG$
    \mbox{(cf.~\cite[(A.14)]{kr-phi-ldg})}, and the Korn type inequality
    in Proposition~\ref{prop:kornii} for $p=2$ imply
    \begin{align}
            \abs{J_{2,1}^1}&\leq \varepsilon\,\norm{\bfv}_\infty\bignorm{\PiDG\bfe_h}_2^2+c_\varepsilon\,\norm{\bfv}_\infty\bignorm{\smash{\Ghk(\bfv-\PiDG\bfv)}}_2^2\notag
            \\&\label{thm:error.9}\leq
      \varepsilon\,c_{\bfv}\norm{\bfe_h}_{\nabla,2,h}^2+c_{\varepsilon,\bfv}\,
      \bignorm{\bfv-\smash{\PiDG\bfv}}_{\nabla,
      2,h}^2
      \\
        &\leq \varepsilon\,c_{\bfv}\norm{\bfe_h}_{\bfD,2,h}^2+c_{\varepsilon,\bfv}\,h^2\,\norm{\nabla^2\bfv}_2^2\,,\notag
    \end{align}
    where $c_{\bfv}\hspace{-0.15em}>\hspace{-0.15em}0$ depends crucially on $\norm{\bfv}_\infty\hspace{-0.15em}\ge\hspace{-0.15em} 0$ and   $c_{\varepsilon,\bfv}\hspace{-0.15em}>\hspace{-0.15em}0$~crucially~on~${\norm{\bfv}_\infty\hspace{-0.15em}\ge\hspace{-0.15em} 0}$~and~${\varepsilon\hspace{-0.1em}>\hspace{-0.1em}0}$. 
    
    Similarly, using that $\bfv\hspace{-0.1em}\in\hspace{-0.1em} L^\infty(\Omega)$, the
    $\varepsilon$-Young inequality \eqref{ineq:young}~with~$\psi\hspace{-0.1em}=\hspace{-0.1em}\frac{1}{2}\abs{\cdot}^2$,~\eqref{eq:eqiv0}, the
    approximation property of $\PiDG$ (cf.~\cite[Lemma
    A.5]{kr-phi-ldg}), the DG-norm stability~of~$\PiDG$
    (cf.~\cite[(A.18)]{dkrt-ldg}), and the~Korn~type inequality in Proposition~\ref{prop:kornii} for $p=2$, we~obtain
    \begin{align}\label{thm:error.10}
        \begin{aligned}
        \abs{J_{2,1}^2}&\leq \varepsilon\,\norm{\bfv}_\infty\bignorm{\smash{\Ghk\PiDG\bfe_h}}_2^2+c_\varepsilon\,\norm{\bfv}_\infty\bignorm{\smash{\bfv-\PiDG\bfv}}_2^2
        \\&\leq \varepsilon\,c_{\bfv}\,\bignorm{\smash{\PiDG\bfe_h}}_{\nabla,2,h}^2+c_{\varepsilon,\bfv}\,h^4\,\norm{\smash{\nabla^2\bfv}}_2^2
        \\&\leq \varepsilon\,c_{\bfv}\norm{\bfe_h}_{\nabla,2,h}^2+c_{\varepsilon,\bfv}\,h^2\,\norm{\nabla^2\bfv}_2^2
        \\&\leq \varepsilon\,c_{\bfv}\norm{\bfe_h}_{\bfD,2,h}^2+c_{\varepsilon,\bfv}\,h^2\, 
      \norm{\smash{\nabla^2\bfv}}^2_2 \,.
        \end{aligned}
    \end{align}
    
    Moreover, we have that
    \begin{align}\label{thm:error.11}
        2 J_{2,2}=\bighskp{\PiDG\bfe_h\otimes\bfe_h}{\Ghk\PiDG\bfv}-\bighskp{\PiDG\bfv\otimes \bfe_h}{\Ghk\PiDG\bfe_h}=\vcentcolon J_{2,2}^1+J_{2,2}^2\,.
    \end{align}
    Using Hölder's inequality, the identity
    $\bfe_h=\PiDG\bfe_h+(\PiDG\bfv- \bfv)$, the discrete Sobolev embedding theorem (cf.~\cite[\!Theorem~5.3]{EP12}),~\eqref{eq:eqiv0}, the approximation properties of $\PiDG$ (cf.~\cite[Proposition~A.2]{br-parabolic}),~the~\mbox{DG-norm}~stability~of $\PiDG$ (cf.~\cite[(A.18)]{dkrt-ldg}) and 
    the Korn type~inequality~in~Proposition~\ref{prop:kornii} for $p=2$, we obtain
    \begin{align}\label{thm:error.12}
        \begin{aligned}
       \abs{ J_{2,2}^1}&\leq \bignorm{\PiDG\bfe_h}_4\bignorm{\Ghk\PiDG\bfv}_2\big(\bignorm{\PiDG\bfe_h}_4+\bignorm{\bfv-\PiDG\bfv}_4\big)
        \\
        & \leq
        c\, \bignorm{\PiDG\bfe_h}_{\nabla,2,h}\bignorm{\PiDG\bfv}_{\nabla,2,h}\big(\bignorm{\PiDG\bfe_h}_{\nabla,2,h}+h^{\frac
          54}\,\norm{\nabla^2\bfv}_2\big)
         \\&
        \leq
        c\,\norm{\bfe_h}_{\nabla,2,h}\norm{\nabla\bfv}_2\big(\norm{\bfe_h}_{\nabla,2,h}+h^{\frac
        54}\,\norm{\nabla^2\bfv}_2\big)
      \\
      &  \leq  c\,{\big(\norm{\bfe_h}_{\bfD,2,h}^2+h^2\,
      \norm{\nabla^2\bfv}^2_2 \big) \norm{\nabla\bfv}_2}\,,
        \end{aligned}
    \end{align}
    and 
    \begin{align}
    \label{thm:error.13}
        \begin{aligned}
        \abs{J_{2,2}^2}&\leq \bignorm{\PiDG\bfv}_4\bignorm{\Ghk\PiDG\bfe_h}_2\big(\bignorm{\PiDG\bfe_h}_4+\bignorm{\bfv-\PiDG\bfv}_4\big)
        \\&
        \leq c\, \bignorm{\PiDG\bfv}_{\nabla,2,h}\bignorm{\PiDG\bfe_h}_{\nabla,2,h}\big(\bignorm{\PiDG\bfe_h}_{\nabla,2,h}+h^{\frac
          54}\,\norm{\nabla^2\bfv}_2\big)
        \\&
        \leq  c\,{\big(\norm{\bfe_h}_{\bfD,2,h}^2+h^2\,
      \norm{\nabla^2\bfv}^2_2 \big) \norm{\nabla\bfv}_2}\,.
        \end{aligned}
    \end{align}
    Finally, combining \eqref{thm:error.9}, \eqref{thm:error.10} in
    \eqref{thm:error.8}, \eqref{thm:error.12},
    \eqref{thm:error.13} in \eqref{thm:error.11}, \eqref{thm:error.7}
    yields 
    \begin{align}\label{thm:error.13.2}
        \begin{aligned}
        \abs{J_2}\leq  \big(\varepsilon\,c_{\bfv}+c\,\norm{\nabla\bfv}_2\big)\norm{\bfe_h}_{\bfD,2,h}^2+\big(c_{\varepsilon,\bfv}+c\,\norm{\nabla\bfv}_2\big)\,h^2\,
      \norm{\nabla^2\bfv}^2_2 \,.
      \end{aligned}
    \end{align}
    
     (\textit{$J_3$}): We treat $J_3$ similarly as 
      $K_3$ in \cite{kr-phi-ldg} (cf.~\cite[proof of Theorem
     4.5]{dkrt-ldg}), i.e., using the $\varepsilon$-Young inequality \eqref{ineq:young} for
  $\psi =\phi_{\abs{\bfD \bfv}}$, we find that
  \begin{align}
    \label{eq:e9}
    \begin{aligned}
      \abs{J_3} &\le c_\varepsilon\,h\,\int_{\Gamma_h}
      (\phi_{\abs{\bfD \bfv}})^* \big (\big|\avg{\bfS}
      -\bigavg{\PiDG\bfS}\big|\big ) \, \mathrm{d}s +\varepsilon
      \,m_{\phi_{\abs{\bfD\bfv}},h}\big (\PiDG \bfe_h\big )
      \\
      &\eqqcolon c_\varepsilon\,\sum_{\gamma\in \Gamma_h} J^\gamma_{3,1}
      +\varepsilon\, J_{3,2}\,. 
    \end{aligned}
  \end{align}
  Using the identity $\PiDG\bfe_h=\bfe_h+(\bfv- \PiDG\bfv)$,
  a~shift~change~from~Lemma~\ref{lem:shift-change}, Lemma~\ref{lem:e5}
  and Corollary \ref{cor:PiDGapproxmspecial}, we find that
  \begin{align}
    \label{eq:e10}
    \begin{aligned}
      \abs{J_{3,2}}  &\leq c\, m_{\phi_{\abs{\bfD\bfv}},h} \big(\bfe_h\big)+c\, m_{\phi_{\abs{\bfD\bfv}},h} \big(\bfv-\PiDG \bfv\big)
      \\&\le   c\,m_{\phi_{\smash{\sss}},h} (\bfe_h) +c\,
    m_{\phi_{\abs{\bfD\bfv}},h} \big(\bfv-\PiDG \bfv\big)\\&\quad +c\, h
      \,\rho
    _{\phi_{\abs{\bfD\bfv}},\Gamma_h}\big({\abs{\bfD\bfv}-\sssl}\big)
      \\[-0.5mm]&\leq c\, m_{\phi_{\smash{\sss}},h} (\bfe_h)+c\, h^2 \norm{\nabla \bfF(\bfD\bfv) }_2^2 + c\,
      \|\bfF(\bfD\bfv) -\smash{\bfF\big(\Dhk\bfv_h\big)}\big\|_2^2\,.
    \end{aligned}
  \end{align}
  For each $\gamma \in \Gamma_h$, we choose some $K \in \mathcal T_h$
  such that $\gamma \subset \partial K$.  From $
  (\phi^*)_{\abs{\SSS(\bfA)}}(t) \sim (\phi_{\abs{\bfA}})^*(t)$, for
  all $t\ge
  0$, $\bfA \in \setR^{d\times d}$
  (cf.~\cite[(2.25)]{kr-phi-ldg}), \eqref{eq:hammerf};
  \eqref{eq:F-F*3}, again \eqref{eq:hammerf}; the trace inequalities
  \cite[Lemma~A.16)]{kr-phi-ldg}, the stability~of~$\PiDG$~in
  \cite[(A.12)]{kr-phi-ldg};  \eqref{eq:hammera},  and twice  Lemma \ref{lem:poin_F}, it~follows~that
  \begin{align}
    \abs{J_{3,1}^\gamma}
    &\le c\,h\bignorm{\bfF^*
      ({\SSS(\bfD\bfv)} ) -\smash{\bfF^*(\PiDG \SSS(\bfD\bfv))}}_{2,\gamma}^2 \label{eq:e11}
    \\
    &\le c\,h \bignorm{\bfF^*
      ({\SSS(\bfD\bfv)} )\! -\! \smash{\bfF^*( \SSS(\Pia\bfD\bfv))}}_{2,\gamma}^2
    \!\!+\!c\,h \bignorm{\smash{\bfF^*
      ( \SSS(\Pia\bfD\bfv))\!-\!
      \bfF^*( \PiDG \SSS(\bfD\bfv))}}_{2,\gamma}^2 \notag
    \\
    &\le  c\,h \bignorm{\smash{\bfF
      (\bfD\bfv )\! -\!\bfF(\Pia\bfD\bfv)}}_{2,\gamma}^2
        \!+\!c\,h\int _\gamma (\phi^*)_{\abs{\SSS(\Pia\bfD\bfv)}} \big (\PiDG (\SSS
      (\bfD\bfv)\!-\!\SSS(\Pia\bfD\bfv))\big ) \, \mathrm{d}s \notag
    \\
    &\le c\,\bignorm{\smash{\bfF(\bfD\bfv )-\bfF(\Pia\bfD\bfv
      )}}_{2,K}^2+c\,h^2\,\bignorm{\nabla\bfF(\bfD\bfv)}_{2,K}^2
      \notag 
    \\
    &\quad +c\,\int_K (\phi^*)_{\abs{\SSS(\Pia\bfD\bfv)}}\big (\SSS
      (\bfD\bfv)-\SSS(\Pia\bfD\bfv)\big ) \, \mathrm{d}x \notag
    \\
    &\le  c\,h^2\,\norm{\nabla \bfF(\bfD
      \bfv )}_{2,K}^2  +c\,\bignorm{\smash{\bfF(\bfD\bfv )-\bfF\big(\Pia\bfD\bfv \big)}}_{2,K}^2 \notag
     \\
    &\le c\, h^2\,\norm{\nabla \bfF(\bfD
      \bfv )}_{2,K}^2\,. \notag
  \end{align}
  Therefore, combining \eqref{eq:e10} and \eqref{eq:e11} in \eqref{eq:e9}, we conclude that 
   \begin{align}
    \label{eq:e9a}
    \abs{J_3}
    \leq c_\varepsilon\, h^2\,\norm{\nabla \bfF(\bfD
      \bfv )}_{2}^2 +\varepsilon \, c\, \big ( 
      \norm{\smash{\bfF(\bfD\bfv) \hspace{-0.1em} - \hspace{-0.1em}\bfF\big(\Dhk\bfv_h\big)}}_2^2  + m_{\phi_{\smash{\sss}},h} (\bfe_h) \big )\,.\hspace{-2mm}
  \end{align}
  
  (\textit{$J_4$}): 
    In view of $\bfv\in L^\infty(\Omega)\cap W^{1,2}_0(\Omega)$, we have that $\bfK =\bfv \otimes \bfv\in W^{1,2}_0(\Omega)$~with $\norm{\nabla     \bfK}_{2} \le c\, \norm{\bfv}_{\infty}\norm{\nabla \bfv}_2\leq c_{\bfv}\,\delta^{2-p}\|\bfF(\bfD\bfv)\|_{2} $. 
    Using~the~\mbox{$\varepsilon$-Young} inequality \eqref{ineq:young} with
    $\psi=\frac{1}{2}\vert\cdot\vert^2$; the approximation
    properties of $\PiDG$ (cf.~\cite[(A.24)]{kr-phi-ldg}), the DG-stability of $\PiDG$ (cf. \cite[(A.19)]{dkrt-ldg}) and the Korn type inequality in Proposition \ref{prop:kornii}, 
    we~get
    \begin{align}\label{thm:error.14}
        \hspace{-3mm}\begin{aligned}
        \abs{J_4}&\leq c_\varepsilon\,h\,\bignorm{\smash{\avg{\bfK} -\bigavg{\PiDG
            \bfK}}}^2_{2,\Gamma_h}+2\,\varepsilon \,h\,\bignorm{\smash{ h^{-1}\jump{\PiDG\bfe_h\otimes \bfn}}}^2_{2,\Gamma_h}
        \\[-0.25mm]
        &\leq c_\varepsilon\,h^2\,\bignorm{\nabla\bfK}^2_2
           +c\,\varepsilon \,\norm{\bfe_h}^2_{2,\nabla,h} 
      \\
      &\leq c_{\varepsilon,\bfv}\,h^2\,\big(\norm{\nabla \bfv}_{2}^2+\|\nabla^2\bfv\|_2^2\big)
      +c\,\varepsilon \,\norm{\bfe_h}^2_{2,\bfD,h}\,.
      \end{aligned}\hspace{-2mm}
    \end{align}
    
    (\textit{$J_5$}): Using the $\varepsilon$-Young inequality
    \eqref{ineq:young} with $\psi=\phi_{\smash{\sss}}$, the identity
    $\PiDG\bfe_h=\bfe_h+(\bfv- \PiDG\bfv)$, twice a shift change from
    Lemma \ref{lem:shift-change}, the approximation properties of
    $\PiDG$ in Proposition \ref{prop:PiDGapproxmspecial} and Corollary
    \ref{cor:PiDGapproxmspecial}, Lemma \ref{lem:e5}, we obtain
    \begin{align}\label{thm:error.15}
        \begin{aligned}
        \abs{J_5}&\leq c_\varepsilon\,h\,\rho_{(\phi_{\smash{\sss}})^*,\Gamma_h} \big (\avg{q} -\bigavg{\PiDG q}\big )
      +\varepsilon \,m_{\phi_{\smash{\sss}},h}\big (\PiDG \bfe_h\big ) 
      \\
      &\leq c_\varepsilon \,c_\kappa\,h\,\rho_{(\phi_{\smash{\abs{\bfD\bfv}}})^*,\Gamma_h} \big (\avg{q}
      -\bigavg{\PiDG q}\big ) +c\,\varepsilon \,m_{\phi_{\smash{\sss}},h} (\bfe_h)
      \\
      &\quad +c\,\varepsilon
      \,m_{\phi_{\abs{\bfD\bfv}},h}\big (\bfv -\PiDG \bfv\big ) +(c_\varepsilon \kappa +c\, \varepsilon) \, h      \,\rho
    _{\phi_{\abs{\bfD\bfv}},\Gamma_h}\big({\abs{\bfD\bfv}-\sssl}\big)
      \\
      &\leq  c_\varepsilon \,c_\kappa\,\rho_{(\phi_{\smash{\abs{\bfD\bfv}}})^*,\Omega} ({h\,\nabla q} ) +c\,\varepsilon
     \,m_{\phi_{\smash{\sss}}} (\bfe_h)+c_\varepsilon \,c_\kappa\, h^2 \norm{\nabla
       \bfF(\bfD\bfv) }_2^2
     \\
     &\quad +(c_\varepsilon \kappa +c\, \varepsilon) \,\|\bfF\big(\Dhk \bfv_h\big) - \bfF(\bfD
        \bfv)\|_2^2\,.
      \end{aligned}
    \end{align}

    Collecting all the estimates
    \eqref{thm:error.1}--\eqref{thm:error.5}, \eqref{thm:error.5a},
    \eqref{thm:error.13.2}, \eqref{eq:e9a}, 
    \eqref{thm:error.14}, and \eqref{thm:error.15}, we proved
    \begin{align}\label{thm:error.16}
    \begin{aligned}
      &\bignorm{\smash{\bfF\big(\Dhk \bfv_h\big) - \bfF(\bfD
        \bfv)}}_2^2 + \alpha\,m_{\phi_{\smash{\sssl}},h } (\bfv_h-\bfv) 
    \\    
    &\le (\varepsilon+(1+\alpha)\,\kappa\, c_\varepsilon)\,c\, \bignorm{\smash{\bfF(\bfD\bfv)
        -\bfF\big(\Dhk\bfv_h\big)}}_2^2
    +\varepsilon \,  c \,(1+\alpha)\, m_{\phi_{\smash{\sssl}},h } (\bfv_h-\bfv)\hspace*{-10mm}
    \\
     &\quad +c\, \big(\varepsilon\,c_{\bfv}+\norm{\nabla\bfv}_2\big)\norm{\bfe_h}_{\bfD,2,h}^2
    + c_\varepsilon \,(1+\alpha\,(c_\kappa+\kappa))\,h^2\, \norm{
        \nabla\bfF(\bfD \bfv) }_{2}^2
    \\
    &\quad+
        (c_{\varepsilon,\bfv}+\norm{\nabla\bfv}_2)\, h^2\,\big(\norm{\nabla\bfv}_2^2+\norm{\nabla^2\bfv}_2^2\big)
        +c_\varepsilon\, \rho_{(\phi_{\smash{\abs{\bfD\bfv}}})^*,\Omega} (h\,\nabla q)
      \\
      &\le \big(\varepsilon+(1+\alpha)\,\kappa\, c_\varepsilon+\delta^{2-p}(\varepsilon\,c_{\bfv}+\norm{\nabla\bfv}_2)\big)\,c\, \bignorm{\smash{\bfF(\bfD\bfv)
        -\bfF\big(\Dhk\bfv_h\big)}}_2^2 
    \\
        &\quad +\big(\varepsilon (1+\alpha )+\delta^{2-p}(\varepsilon\,c_{\bfv}+\norm{\nabla\bfv}_2)\big)\,  c \,m_{\phi_{\smash{\sssl}},h } (\bfv_h-\bfv)
      \\
      &\quad + \big(c_\varepsilon \,(1+c_\kappa+\alpha\,(c_\kappa+\kappa))+\delta^{2-p}(c_{\varepsilon,\bfv}+\norm{\nabla\bfv}_2)\big)\,h^2\, \norm{
        \bfF(\bfD \bfv) }_{1,2}^2
    \\
        &\quad+c_\varepsilon\,c_\kappa\, \rho_{(\phi_{\smash{\abs{\bfD\bfv}}})^*,\Omega} (h\,\nabla q)\,,
         \end{aligned}
    \end{align}
    where we  used for the last inequality sign \eqref{eq:embed1},
    and~that 
    \begin{align}
        \norm{\bfe_h}^2_{\bfD,2,h}\leq c\,\delta^{2-p}\,\big(
      \norm{\smash{\bfF(\bfD\bfv)
      -\bfF\big(\Dhk\bfv_h\big)}}_2^2+m_{\phi_{\smash{\sssl}},h }
      (\bfv_h-\bfv)\big)\,,\label{eq:2toF}
    \end{align}
which follows from Proposition \ref{prop:equivalences} for $p=2$,
    Proposition \ref{lem:hammer}, and $p>2$.

   Choosing, for given $\alpha>0$, $\delta >0$, first $c_0>0$ sufficiently small, subsequently,~for this fixed $c_0>0$, $\varepsilon>0$ sufficiently
    small, and, then, for this fixed $\varepsilon>0$, 
    $\kappa>0$ sufficiently small, we can absorb the first and the second term on the right-hand side of the inequality chain \eqref{thm:error.16} in the left-hand side and conclude 
    the existence of a
    constant $c>0$, depending only on the characteristics of $\SSS$,
    the chunkiness $\omega_0>0$, $\delta^{-1}>0$,  $\alpha^{-1}<0$,~and~${\norm{\bfv}_{\infty}\ge 0}$, 
    such that
  \begin{align*}
    \bignorm{\smash{\bfF\big(\Dhk \bfv_h\big)}\!-\! \bfF(\bfD
      \bfv)}_2^2 \!+\!
      m_{\phi_{\smash{\sssl}},h } (\bfv_h\!-\!\bfv)\leq c\, h^2 \norm{
    \bfF(\bfD \bfv) }_{1,2}^2\!+\!c\,\rho_{(\phi_{\abs{\bfD \bfv}})^*,\Omega}(h\nabla q).\hspace*{-1mm}
  \end{align*}
  This completes the proof of Theorem \ref{thm:error}.
\end{proof}

\begin{proof}[Proof of Corollary \ref{cor:error}]
    Using that $(\varphi_a)^*(h\,t)\hspace{-0.1em} \le \varphi^*(h\,t)\hspace{-0.1em} \le\hspace{-0.1em}
    c\, h^{p'} \varphi^*(t)$ for all $t,a\hspace{-0.1em}\ge\hspace{-0.1em} 0$, valid~for~${p\hspace{-0.1em}>\hspace{-0.1em}2}$ (cf.~\cite{bdr-phi-stokes}),  we deduce from Theorem \ref{thm:error} that
    \begin{align*}
       \norm{\bfF \smash{\big(\Dhk \bfv_h\big)} \hspace{-0.2em}-\hspace{-0.1em} \bfF(\bfD
        \bfv) }_{2}^{2} \hspace{-0.1em}+\hspace{-0.1em} \alpha\hspace{0.1em}m_{\phi_{\smash{\sssl}},h } (\bfv_h\hspace{-0.2em}-\hspace{-0.1em}\bfv)
      \hspace{-0.1em}\le\hspace{-0.1em} c\hspace{0.05em}h^2 \norm{
        \bfF(\bfD \bfv) }_{1,2}^2\hspace{-0.1em}+\hspace{-0.1em}c\hspace{0.05em}h^{p'}\!\! \rho_{\phi^*,\Omega}(\nabla q).
    \end{align*}
{  If, in addition, $\bfg \in L^2(\Omega)$ we proved in Lemma
  \ref{lem:pres} that $(\delta+ \abs{\bfD\bfv})^{2-p} \, \abs{\nabla q}^2 \in
  L^1(\Omega)$.  Moreover, there holds
  $(\varphi_a)^*(h\,t)\hspace{-0.1em} \sim \big( (\delta+a)^{p-1}
  +h\,t\big )^{p'-2}\, h^2\, t^2 \hspace{-0.1em}
  \le\hspace{-0.1em}(\delta+a)^{2-p}\, h^2\, t^2 $ for all
  $t,a\hspace{-0.1em}\ge\hspace{-0.1em} 0$, since $p>2$. Thus, we deduce from
  Theorem \ref{thm:error} that
  \begin{align*}
    &\bignorm{\bfF\smash{\big(\Dhk \bfv_h\big)} - \bfF(\bfD
      \bfv)}_2^2 +m_{\phi_{\smash{\sssl}},h } (\bfv_h-\bfv)
    \\
    &\leq  c\, h^2 \big ( \norm{\bfF(\bfD \bfv)
      }_{1,2}^2+\big\|(\delta+\abs{\bfD\bfv})^{\smash{\frac{2-p}{2}}}\nabla q\big\|_2^2
      \big )\,.
  \end{align*}}
\end{proof}

\begin{proof}[Proof of Corollary \ref{cor:error_F*}]
  Lemma \ref{lem:e7} with
  $\bfY=\Dhk\bfv_h$, $j=k$, and \eqref{eq:F-F*3} 
  imply that\enlargethispage{3mm}
  \begin{align*}
    \norm{\bfF^*\smash{(\PiDG \SSS(\Dhk\bfv_h))-\bfF^*(\SSS(\bfD \bfv))}}_2^2
    &\le c\, \norm{\smash{\bfF^*(\SSS(\Dhk\bfv_h))-\bfF^*(\SSS(\bfD \bfv))}}_2^2 
    \\&\quad + c\,  h^2\, \norm{\nabla \bfF(\bfD \bfv) }_2^2
    \\
    &\le c\, \bignorm{\smash{\bfF(\Dhk\bfv_h) - \bfF(\bfD\bfv)}}_2^2 
    + c\,  h^2\, \norm{\nabla \bfF(\bfD\bfv) }_2^2\,.
  \end{align*}
\end{proof}
The same method of proof of course also works for the $p$-Stokes
problem, i.e., we neglect the \hspace{-0.2mm}convective \hspace{-0.2mm}term \hspace{-0.2mm}in
\hspace{-0.2mm}\eqref{eq:p-navier-stokes}, \hspace{-0.2mm}Problem \hspace{-0.2mm}(P\hspace{-0.2mm}), \hspace{-0.2mm}Problem \hspace{-0.2mm}(P$_h$\hspace{-0.2mm}), \hspace{-0.2mm}Problem \hspace{-0.2mm}(\hspace{-0.2mm}Q),
\hspace{-0.2mm}and~\hspace{-0.2mm}\mbox{Problem~\hspace{-0.2mm}(\hspace{-0.2mm}Q$_h$\hspace{-0.2mm})}. Note that the dependence on $\delta^{-1}>0$ comes
solely from the convective term. Thus, we obtain for the $p$-Stokes problem a better dependence on the constants. 
\begin{theorem}
  \label{thm:error_stokes}
  Let $\SSS$ satisfy Assumption~\ref{assum:extra_stress} with
  $p\in(2,\infty)$ and $\delta\ge 0$,~let~$k\in \mathbb{N}$, and let
  $\bfg\in L^{p'}(\Omega)$. Moreover, let
  $(\bfv,q)^\top \in \Vo(0)\times \Qo$ be a solution of
  Problem~(Q) without the convective term (cf.~\eqref{eq:q1}, \eqref{eq:q2}) with
  $\bfF(\bfD \bfv) \in W^{1,2}(\Omega)$, 
  and let $(\bfv_h,q_h)^\top \in {\Vhk(0)\times \Qhkco}$ be a solution
  of Problem (Q$_h$) without the terms coming from the convective term (cf.~\eqref{eq:primal1}) for $\alpha>0$. 
  Then, it holds 
  \begin{align*}
    \bignorm{\smash{\bfF\big(\Dhk \bfv_h\big)}\hspace{-0.25em}-\hspace{-0.2em} \bfF(\bfD
      \bfv)}_2^2 \!+\!
      m_{\phi_{\smash{\sssl}},h } (\bfv_h \hspace{-0.25em}-\hspace{-0.2em}\bfv)\leq c\, h^2 \norm{\nabla
    \bfF(\bfD \bfv) }_{2}^2\!+\!c\,\rho_{(\phi_{\abs{\bfD \bfv}})^*,\Omega}(h\nabla q)\hspace*{-1mm}
  \end{align*}
  with a~constant~$c>0$ depending only on the characteristics of
  $\SSS$, the chunkiness $\omega_0>0$, 
  $\alpha^{-1}>0$,~and~${k\in \mathbb{N}}$.
\end{theorem}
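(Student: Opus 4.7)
The plan is to reuse the proof of Theorem \ref{thm:error} almost verbatim, simply dropping every contribution arising from the convective term $[\nabla\bfv]\bfv$, the tensor $\bfK=\bfv\otimes\bfv$, and the discrete trilinear form $b_h$. Setting $\bfe_h\coloneqq\bfv_h-\bfv$ and writing $\bfe_h=\PiDG\bfe_h+(\PiDG\bfv-\bfv)$ as in \eqref{thm:error.1}, the same testing step produces four contributions $I_1,\alpha I_2,I_3,\alpha I_4$; however, since the Stokes version of the error identity \eqref{eq:errorprimal} contains no $b_h$-terms, no $\PiDG\bfK$-terms, and no $\bfL\bfv$-contribution, the analog of \eqref{thm:error.4} has only the three summands $J_1,J_3,J_5$. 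The two previously most delicate contributions $J_2$ (convective gap) and $J_4$ (from $\bfK$) disappear entirely, and with them the need for any smallness hypothesis.

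Next, I would carry over the bounds for the five surviving terms unchanged: $I_3$ from \eqref{thm:error.2}, $\alpha I_4$ from \eqref{thm:error.3}, $J_1$ from \eqref{thm:error.5a}, $J_3$ from \eqref{eq:e9a}, and $J_5$ from \eqref{thm:error.15}. These rely only on Proposition \ref{lem:hammer}, Lemma \ref{lem:shift-change}, Lemma \ref{lem:poin_F}, Proposition \ref{prop:app_V}, Lemma \ref{lem:e7}, Proposition \ref{prop:e4}, Corollary \ref{cor:app_V}, Proposition \ref{prop:PiDGapproxmspecial}, Corollary \ref{cor:PiDGapproxmspecial}, and Lemma \ref{lem:e5}, none of which requires $\delta>0$ or any bound on $\bfv$. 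The $\delta^{-1}$-dependence in Theorem \ref{thm:error} entered exclusively through the $p=2$ conversion \eqref{eq:2toF} that was needed to dominate $\|\bfe_h\|_{\bfD,2,h}^2$ appearing in the $J_2$ and $J_4$ estimates; the $\|\bfv\|_\infty$-dependence entered only through the $L^\infty\text{-}L^2\text{-}L^2$ H\"older splits for the convective integrals. Both dependencies therefore vanish in the Stokes case, and the surviving constants depend solely on the characteristics of $\SSS$, the chunkiness $\omega_0$, the stability parameter $\alpha^{-1}$, and the polynomial degree $k$.

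For the absorption, the analog of \eqref{thm:error.16} is strictly simpler: it no longer carries a term of the form $c(\varepsilon c_{\bfv}+\|\nabla\bfv\|_2)\|\bfe_h\|_{\bfD,2,h}^2$, so the usual two-stage Young-type absorption (first $\varepsilon>0$, then $\kappa>0$ chosen sufficiently small, with thresholds depending only on the characteristics of $\SSS$, $\omega_0$, $\alpha^{-1}$, and $k$) directly swallows the remaining $\|\bfF(\bfD\bfv)-\bfF(\Dhk\bfv_h)\|_2^2$ and $m_{\phi_{\sssl},h}(\bfv_h-\bfv)$ contributions on the right into the left. The main — and essentially only — point that requires genuine extra work is the justification of the Stokes analog of the error identity \eqref{eq:errorprimal} for every $\delta\ge 0$: from $\nabla q=\divo\SSS(\bfD\bfv)+\bfg$ together with $|\nabla\SSS(\bfD\bfv)|^2\sim(\delta+|\bfD\bfv|)^{p-2}|\nabla\bfF(\bfD\bfv)|^2$ and H\"older's inequality, one obtains $\nabla q\in L^{p'}(\Omega)$ directly from $\bfF(\bfD\bfv)\in W^{1,2}(\Omega)$, bypassing Lemma \ref{lem:pres} (which was invoked in Theorem \ref{thm:error} only because of the convective term). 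This suffices to derive the Stokes version of \eqref{eq:cont} by the same integration-by-parts argument, and the proof then proceeds mechanically as described above.
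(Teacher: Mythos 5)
Your proposal is correct and takes essentially the same route as the paper: the paper's proof of Theorem~\ref{thm:error_stokes} likewise starts from \eqref{thm:error.1}, observes that without the convective term only $J_1$, $J_3$, $J_5$ survive in place of \eqref{thm:error.4}, reuses the bounds \eqref{thm:error.2}, \eqref{thm:error.3}, \eqref{thm:error.5a}, \eqref{eq:e9a}, \eqref{thm:error.15}, and absorbs by choosing first $\varepsilon>0$ and then $\kappa>0$ small, so that the $\delta^{-1}$- and $\norm{\bfv}_\infty$-dependences (and the smallness condition) indeed disappear with $J_2$ and $J_4$. Your extra remark that $\nabla q\in L^{p'}(\Omega)$ follows directly from $\bfF(\bfD\bfv)\in W^{1,2}(\Omega)$ via $\abs{\nabla\SSS(\bfD\bfv)}^2\sim(\delta+\abs{\bfD\bfv})^{p-2}\abs{\nabla\bfF(\bfD\bfv)}^2$ and H\"older's inequality, so that the error identity and the finiteness of $\rho_{(\phi_{\abs{\bfD\bfv}})^*,\Omega}(h\nabla q)$ are available for all $\delta\ge 0$ without invoking Lemma~\ref{lem:pres}, is a sound justification of a point the paper leaves implicit.
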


\begin{corollary}\label{cor:error_stokes}
  Let the assumptions of Theorem \ref{thm:error_stokes} be satisfied. Then,~it~holds
  \begin{align*}
    \bignorm{\smash{\bfF\big(\Dhk \bfv_h\big) \hspace{-0.06em}-\hspace{-0.06em} \bfF(\bfD
      \bfv)}}_2^2 \hspace{-0.06em}+\hspace{-0.05em} 
      \,m_{\phi_{\smash{\sssl}},h } (\bfv_h\hspace{-0.06em}-\hspace{-0.06em}\bfv)\hspace{-0.06em}\le\hspace{-0.06em} c\, h^2 \norm{\nabla\bfF(\bfD \bfv) }_2^2\hspace{-0.06em}+\hspace{-0.06em}c\,h^{p'}\!\rho_{\phi^*,\Omega}(\nabla q)
  \end{align*}
  with a constant $c>0$ depending only on the characteristics of
  $\SSS$, the~chunkiness~${\omega_0>0}$, 
  $\alpha^{-1}>0$, and  $k\in \mathbb{N}$.  {If, in addition, $\bfg\in L^{2}(\Omega)$,~then
  \begin{align*}
    &\bignorm{\smash{\bfF\big(\Dhk \bfv_h\big) - \bfF(\bfD
      \bfv)}}_2^2 +m_{\phi_{\smash{\sssl}},h } (\bfv_h-\bfv)
    \\[1.5mm]
    &\leq  c\, h^2 \smash{\big ( \norm{\bfF(\bfD \bfv)
      }_{1,2}^2+
      \big\|(\delta +\abs{\bfD\bfv})^{\smash{\frac{2-p}{2}}}\nabla q\big\|_2^2
      \big )}
  \end{align*}}
  with a constant $c>0$ depending only on the characteristics of
  $\SSS$, the~chunkiness~${\omega_0>0}$, 
  $\alpha^{-1}>0$, and $k\in \mathbb{N}$.
\end{corollary}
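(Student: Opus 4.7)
The plan is to follow verbatim the proof of Corollary~\ref{cor:error} by applying Theorem~\ref{thm:error_stokes} in place of Theorem~\ref{thm:error}. The right-hand side of Theorem~\ref{thm:error_stokes} involves the modular $\rho_{(\phi_{\abs{\bfD \bfv}})^*,\Omega}(h\,\nabla q)$, and the two claims differ only in how this modular is estimated. Both estimates rest on elementary pointwise inequalities for the shifted conjugate N-function $(\phi_a)^*$ that hold precisely because $p>2$.

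For the first assertion, I would use the pointwise bound $(\phi_a)^*(h\,t) \le \phi^*(h\,t) \le c\, h^{p'}\,\phi^*(t)$ for all $t,a\ge 0$ (cf.~\cite{bdr-phi-stokes}). Integrating over $\Omega$ with $a = \abs{\bfD\bfv(x)}$ and $t = \abs{\nabla q(x)}$ gives $\rho_{(\phi_{\abs{\bfD\bfv}})^*,\Omega}(h\,\nabla q) \le c\, h^{p'}\,\rho_{\phi^*,\Omega}(\nabla q)$, and substitution into Theorem~\ref{thm:error_stokes} yields the first claim. Note that the assumption $\bfg\in L^{p'}(\Omega)$ is equivalent to $\nabla q\in L^{p'}(\Omega)$ here, so the right-hand side is finite.

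For the second assertion (under $\bfg\in L^{2}(\Omega)$), I would first verify the $p$-Stokes analogue of Lemma~\ref{lem:pres}. The convective term is absent, so the argument is strictly easier: from $\nabla q = \divo\bfS(\bfD\bfv) + \bfg$ a.e.\ in $\Omega$ and the pointwise identity $\abs{\nabla \bfS(\bfD\bfv)}^2 \sim \abs{\nabla\bfF(\bfD\bfv)}^2 (\delta+\abs{\bfD\bfv})^{p-2}$ (cf.~\cite[Proposition~2.4]{br-plasticity}), dividing by $(\delta+\abs{\bfD\bfv})^{p-2}$ and invoking $p>2$ yields $(\delta+\abs{\bfD\bfv})^{2-p}\abs{\nabla q}^2 \in L^1(\Omega)$. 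Then I would use the sharper equivalence $(\phi_a)^*(h\,t)\sim \big((\delta+a)^{p-1}+h\,t\big)^{p'-2}h^2\,t^2 \le (\delta+a)^{2-p}\,h^2\,t^2$ for all $t,a\ge 0$ (valid for $p>2$) to bound the modular by $c\,h^2\,\big\|(\delta+\abs{\bfD\bfv})^{(2-p)/2}\,\nabla q\big\|_2^2$, and substitute into Theorem~\ref{thm:error_stokes}.

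There is essentially no genuine obstacle: the proof is a near-verbatim repetition of the proof of Corollary~\ref{cor:error} given above, with the $c_{\bfv}$- and $c_0$-dependencies dropped because they arose solely from handling the convective term. The only small verification needed is the $p$-Stokes version of Lemma~\ref{lem:pres}, and this simplifies rather than complicates since we no longer need $\bfv\in L^\infty(\Omega)$ to absorb $[\nabla\bfv]\bfv$ into the elliptic part.
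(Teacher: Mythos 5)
Your proposal is correct and follows essentially the same route as the paper, whose proof of Corollary~\ref{cor:error_stokes} is precisely to repeat the argument of Corollary~\ref{cor:error} (the bound $(\phi_a)^*(h\,t)\le c\,h^{p'}\phi^*(t)$ for the first claim, the equivalence $(\phi_a)^*(h\,t)\sim((\delta+a)^{p-1}+h\,t)^{p'-2}h^2t^2\le(\delta+a)^{2-p}h^2t^2$ for the second) with Theorem~\ref{thm:error_stokes} in place of Theorem~\ref{thm:error}. Your explicit check of the $p$-Stokes analogue of Lemma~\ref{lem:pres} is a sensible addition that the paper leaves implicit, and it correctly notes that the argument simplifies (and works for $\delta\ge 0$) once the convective term is absent.
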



\begin{remark}
  {\rm  {Note that Corollary \ref{cor:error_stokes} is the first result
     for $p$-Stokes problems with $p>2$ which shows a linear convergence rate
     under verifiable assumptions on the regularity of the
     solution. Moreover, the convergence
     rates are optimal for the different regularity assumptions (cf.~Section~\ref{sec:experiments}).}
    }
\end{remark}

\begin{proof}[Proof of Theorem \ref{thm:error_stokes}]
  We proceed analogously to the proof of Theorem \ref{thm:error}. We start with \eqref{thm:error.1}, which reads
    \begin{align}\label{thm:error_stokes.1}
        \begin{aligned}
              c\,\bignorm{\smash{\bfF\big(\Dhk \bfv_h\big) - \bfF(\bfD
                \bfv)}}_2^2+c\,\alpha\,m_{\phi_{\smash{\sssl}},h } (\bfe_h)\leq  I_1+\alpha I_2 + I_3 + \alpha I_4\,,
        \end{aligned}
  \end{align}
   where  $I_i$, $i=1,\dots,4$, are defined in \eqref{thm:error.1}. Differing from the proof of Theorem \ref{thm:error}, due to the absence of the convective~term,~we~have~that
   \begin{align}\label{thm:error_stokes.2}
         I_1+\alpha I_2\leq J_1+J_3+J_5\,,
   \end{align}
   where $J_i$, $i=1,3,5$, are defined in \eqref{thm:error.4}. Then, resorting~in~\eqref{thm:error_stokes.1} to \eqref{thm:error_stokes.2}
    \eqref{thm:error.2}, \eqref{thm:error.3}, \eqref{thm:error.5a}, \eqref{eq:e9a}, and \eqref{thm:error.15}, we conclude that
    \begin{align}\label{thm:error_stokes.3}
      &c\,\bignorm{\smash{\bfF\big(\Dhk \bfv_h\big) - \bfF(\bfD
        \bfv)}}_2^2 + c\,\alpha\,m_{\phi_{\smash{\sssl}},h } (\bfv_h-\bfv)
      \\
      &\le  \big(\varepsilon+(1+\alpha)\,\kappa\, c_\varepsilon\big)\,c\, \bignorm{\smash{\bfF(\bfD\bfv)
        -\bfF\big(\Dhk\bfv_h\big)}}_2^2  +\varepsilon\,  c \,(1+\alpha)\,m_{\phi_{\smash{\sssl}},h } (\bfv_h-\bfv)\notag
      \\
      &\quad + \big(c_\varepsilon \,(1+c_\kappa+\alpha\,(c_\kappa+\kappa))\big)\,h^2\, \norm{\nabla 
        \bfF(\bfD \bfv) }_{2}^2+c_\varepsilon\,c_\kappa\, \rho_{(\phi_{\smash{\abs{\bfD\bfv}}})^*,\Omega}(h\,\nabla q)\,,\notag 
    \end{align}
    Choosing, for given $\alpha>0$, first $\varepsilon>0$ sufficiently
    small, and, then, for this fixed $\varepsilon>0$, 
    $\kappa>0$ sufficiently small, we can absorb the first two terms on the right-hand side of \eqref{thm:error_stokes.3} in the left-hand side and conclude 
    the existence of a
    constant $c>0$, depending only on the characteristics of $\SSS$,
    the chunkiness $\omega_0\hspace{-0.1em}>\hspace{-0.1em}0$, ${\alpha^{-1}\hspace{-0.1em}>\hspace{-0.1em}0}$, and $k\in \mathbb{N}$, such that
  \begin{align*}
    \bignorm{\smash{\bfF\big(\Dhk \bfv_h\big)}\hspace{-0.25em}-\hspace{-0.2em} \bfF(\bfD
      \bfv)}_2^2 \!+\!
      m_{\phi_{\smash{\sssl}},h } (\bfv_h \hspace{-0.25em}-\hspace{-0.2em}\bfv)\leq c\, h^2 \norm{\nabla
    \bfF(\bfD \bfv) }_{2}^2\!+\!c\,\rho_{(\phi_{\abs{\bfD \bfv}})^*,\Omega}(h\nabla q).\hspace*{-1mm}
  \end{align*}
    This completes the proof of Theorem \ref{thm:error_stokes}.
\end{proof}

\begin{proof}[Proof of Corollary \ref{cor:error_stokes}]
    We follow the arguments in the proof of Corollary \ref{cor:error} but now resort to Theorem \ref{thm:error_stokes}.
\end{proof}
\section{Numerical experiments}\label{sec:experiments}

In this section, we apply the LDG scheme \eqref{eq:DG} (or \eqref{eq:primal1} and  \eqref{eq:primal2}) to solve
numerically the system~\eqref{eq:p-navier-stokes}~with $\SSS\colon\mathbb{R}^{d\times d}\to\mathbb{R}^{d\times d}$, for every $\bfA\in\mathbb{R}^{d\times d}$ defined via  ${\SSS(\bfA) \coloneqq (\delta+\vert \bfA^{\textup{sym}}\vert)^{p-2}\bfA^{\textup{sym}}}$,
where $\delta\coloneqq 1\textrm{e}{-}4$ and $ p>2$.
We approximate the discrete solution~${\bfv_h\in V^k_h}$ of the non-linear problem~\eqref{eq:DG}~deploying the Newton solver from \mbox{\textsf{PETSc}} (version 3.17.3), cf.~\cite{LW10}, with an absolute tolerance~of $\tau_{abs}\!=\! 1\textrm{e}{-}8$ and a relative tolerance of $\tau_{rel}\!=\!1\textrm{e}{-}10$. The linear system emerging in each Newton step is solved using a sparse direct solver from \textsf{MUMPS} (version~5.5.0),~cf.~\cite{mumps}. For the numerical flux \eqref{def:flux-S}, we choose the fixed parameter $\alpha=2.5$. This choice is in accordance with the choice in \mbox{\cite[Table~1]{dkrt-ldg}}. In the implementation, the uniqueness of the pressure is enforced via a zero mean condition.

All experiments were carried out using the finite element software package~\mbox{\textsf{FEniCS}} (version 2019.1.0), cf.~\cite{LW10}. 

For \hspace{-0.1mm}our \hspace{-0.1mm}numerical \hspace{-0.1mm}experiments, \hspace{-0.1mm}we \hspace{-0.1mm}choose \hspace{-0.1mm}$\Omega\!=\! (-1,1)^2$ \hspace{-0.1mm}and \hspace{-0.1mm}linear~\hspace{-0.1mm}elements,~\hspace{-0.1mm}i.e.,~\hspace{-0.1mm}${k\!=\! 1}$. We choose $\smash{\bfg\in L^{p'}(\Omega)}$ and boundary data $\bfv_0\in W^{\smash{1,1-\frac{1}{p}}}(\partial\Omega)$\footnote{The exact solution is not zero on the boundary of the computational domain. However, the error is clearly concentrated around the singularity and, thus, this small inconsistency with the setup of the theory does not have any influence on the results of this paper. In particular, note that Part I of the paper (cf.~\cite{kr-pnse-ldg-1})
already established at least the weak convergence of the method also for the fully non-homogeneous case.\vspace{-1.75cm}}  such that $\bfv\in W^{1,p}(\Omega)$ and $q \in \Qo$, for every $x\coloneqq (x_1,x_2)^\top\in \Omega$ defined by
\begin{align}
	\bfv(x)\coloneqq \vert x\vert^{\beta} (x_2,-x_1)^\top\,, \qquad q(x)\vcentcolon =25\,(\vert x\vert^{\gamma}-\langle\,\vert \!\cdot\!\vert^{\gamma}\,\rangle_\Omega)
\end{align}
are a solutions of  \eqref{eq:p-navier-stokes}. Here, we choose $\beta=1\textrm{e}{-}2$, which implies ${\bfF(\bfD\bfv)\in W^{1,2}(\Omega)}$. {Concerning the pressure regularity, we consider two cases: Namely,  
we choose either $\gamma= 1-\smash{\frac{2}{p'}}+1\textrm{e}{-}4$, which  just yields ${q \in
  W^{1,p'}(\Omega)}$ (case 1), or we choose ${\gamma= \beta\frac{p-2}{2}+1\textrm{e}{-}4}$, which just yields $(\delta+\abs{\bfD\bfv})^{\frac{2-p}{2}}\nabla q \in
  L^2(\Omega)$ (case 2). Thus,~for $\gamma= 1-\smash{\frac{2}{p'}}+1\textrm{e}{-}4$~(case~1), we can expect the
convergence~rate~$\smash{\frac{p'}{2}}$, while for $\gamma\hspace{-0.1em}= \hspace{-0.1em}\beta\frac{p-2}{2}+1\textrm{e}{-}4$ (case 2), we can expect~the~convergence~rate~$1$.~(cf.~Corollary~\ref{cor:error}).}
 
We \hspace{-0.1mm}construct \hspace{-0.1mm}a \hspace{-0.1mm}initial \hspace{-0.1mm}triangulation \hspace{-0.1mm}$\mathcal
T_{h_0}$, \hspace{-0.1mm}where \hspace{-0.1mm}$h_0\hspace{-0.2em}=\hspace{-0.2em}\smash{\frac{1}{\sqrt{2}}}$, \hspace{-0.1mm}by \hspace{-0.1mm}subdividing~\hspace{-0.1mm}a~\hspace{-0.1mm}\mbox{rectangular} cartesian grid~into regular triangles with different orientations.  Finer triangulations~$\mathcal T_{h_i}$, $i=1,\dots,5$, where $h_{i+1}=\frac{h_i}{2}$ for all $i=1,\dots,5$, are 
obtained by
regular subdivision of the previous grid: Each \mbox{triangle} is subdivided
into four equal triangles by connecting the midpoints of the edges, i.e., applying the red-refinement rule, cf. \cite[Definition~4.8~(i)]{Ba16}.

Then, for the resulting series of triangulations $\mathcal T_{h_i}$, $i\!=\!1,\dots,5$, we apply~the~above Newton scheme to compute the corresponding numerical solutions $(\bfv_i,\bfL_i,\bfS_i)^\top\coloneqq \smash{(\bfv_{h_i},\bfL_{h_i},\bfS_{h_i})^\top\in V_{h_i}^k\times X_{h_i}^k\times X_{h_i}^k}$, $i=1,\dots,5$, 
and the error quantities
\begin{align*}
	\left.\begin{aligned}
		e_{\bfL,i}&\coloneqq \|\bfF(\bfL_i^{\textup{sym}})-\bfF(\bfL^{\textup{sym}})\|_2\,,\\
		e_{\bfS,i}&\coloneqq \|\bfF^*(\bfS_i)-\bfF^*(\bfS)\|_2\,,\\
		e_{\jump{},i}&\coloneqq m_{\phi_{\smash{\avg{\abs{\Pi_{h_i}^0\!\bfL_i^{\textup{sym}}}}}},h_i}(\bfv_i-\bfv)^{\smash{\frac{1}{2}}}\,,\\
	\end{aligned}\quad\right\}\quad i=1,\dots,5\,.
\end{align*}
As estimation of the convergence rates,  the experimental order of convergence~(EOC)
\begin{align*}
	\texttt{EOC}_i(e_i)\coloneqq \frac{\log(e_i/e_{i-1})}{\log(h_i/h_{i-1})}\,, \quad i=1,\dots,5\,,
\end{align*}
where for any $i= 1,\dots,5$, we denote by $e_i$
either 
$e_{\bfL,i}$ , $e_{\bfS,i}$, or
$e_{\jump{},i}$,~\mbox{resp.},~is~recorded.  For different values of $p\in \{2.25, 2.5, 2.75, 3, 3.25, 3.5\}$ and a
series of triangulations~$\mathcal{T}_{h_i}$, $i = 1,\dots,5$,
obtained by regular, global refinement as described above, the EOC is
computed and presented in Table~\ref{tab1}, Table~\ref{tab2}, and
Table~\ref{tab3}, 
respectively. {For both  case~1 and case 2 
we observe the expected a convergence rate of about $\texttt{EOC}_i(e_i)\approx \smash{\frac{p'}{2}}$, and ${\texttt{EOC}_i(e_i)\approx 1}$, resp., $i=1,\dots, 5$, (cf.~Corollary
\ref{cor:error}).}

\begin{table}[H]
    \setlength\tabcolsep{1.9pt}
	\centering
	\begin{tabular}{c |c|c|c|c|c|c|c|c|c|c|c|c|} \cmidrule(){1-13}
    \multicolumn{1}{|c||}{\cellcolor{lightgray}$\gamma$}	
    & \multicolumn{6}{c||}{\cellcolor{lightgray} case 1 
    }   & \multicolumn{6}{c|}{\cellcolor{lightgray} case 2 
    }\\ 
			\hline 
		   
		    \multicolumn{1}{|c||}{\cellcolor{lightgray}\diagbox[height=1.1\line,width=0.11\dimexpr\linewidth]{\vspace{-0.6mm}$i$}{\\[-5mm] $p$}}
		    & \cellcolor{lightgray}2.25 & \cellcolor{lightgray}2.5  & \cellcolor{lightgray}2.75  &  \cellcolor{lightgray}3.0 & \cellcolor{lightgray}3.25  & \multicolumn{1}{c||}{\cellcolor{lightgray}3.5} &  \multicolumn{1}{c|}{\cellcolor{lightgray}2.25}   & \cellcolor{lightgray}2.5  & \cellcolor{lightgray}2.75  & \cellcolor{lightgray}3.0  & \cellcolor{lightgray}3.25 &   \cellcolor{lightgray}3.5 \\ \hline\hline
			\multicolumn{1}{|c||}{\cellcolor{lightgray}$1$}                		& 0.888 & 0.819 & 0.771 & 0.734 & 0.706 & \multicolumn{1}{c||}{0.684} & \multicolumn{1}{c|}{1.360} & 1.109 & 1.000 & 0.940 & 0.898 & 0.870 \\ \hline
			\multicolumn{1}{|c||}{\cellcolor{lightgray}$2$}                  	& 0.898 & 0.830 & 0.781 & 0.745 & 0.716 & \multicolumn{1}{c||}{0.693} & \multicolumn{1}{c|}{1.116} & 1.065 & 1.032 & 1.002 & 0.983 & 0.969 \\ \hline
			\multicolumn{1}{|c||}{\cellcolor{lightgray}$3$}                     & 0.899 & 0.832 & 0.784 & 0.748 & 0.720 & \multicolumn{1}{c||}{0.697} & \multicolumn{1}{c|}{1.010} & 1.021 & 1.019 & 1.011 & 0.996 & 0.987 \\ \hline
			\multicolumn{1}{|c||}{\cellcolor{lightgray}$4$}               		& 0.900 & 0.833 & 0.785 & 0.749 & 0.721 & \multicolumn{1}{c||}{0.699} & \multicolumn{1}{c|}{0.982} & 1.006 & 1.008 & 1.008 & 1.003 & 0.994 \\ \hline
			\multicolumn{1}{|c||}{\cellcolor{lightgray}$5$}               		& 0.900 & 0.833 & 0.786 & 0.750 & 0.722 & \multicolumn{1}{c||}{0.700} & \multicolumn{1}{c|}{0.977} & 1.002 & 1.004 & 1.006 & 1.005 & 1.000 \\ \hline\hline
            \multicolumn{1}{|c||}{\cellcolor{lightgray}\small expected}         & 0.900 & 0.833 & 0.786 & 0.750 & 0.722 & \multicolumn{1}{c||}{0.700} & \multicolumn{1}{c|}{1.000} & 1.000 & 1.000 & 1.000 & 1.000 & 1.000 \\ \hline
	\end{tabular}\vspace{-2mm}
	\caption{\small Experimental order of convergence: $\texttt{EOC}_i(e_{\bfL,i})$,~${i=1,\dots,5}$.}
\label{tab1}
\end{table}\vspace{-8mm}

\begin{table}[H]
    \setlength\tabcolsep{1.9pt}
	\centering
	\begin{tabular}{c |c|c|c|c|c|c|c|c|c|c|c|c|} \cmidrule(){1-13}
	\multicolumn{1}{|c||}{\cellcolor{lightgray}$\gamma$}	
    & \multicolumn{6}{c||}{\cellcolor{lightgray}case 1}   & \multicolumn{6}{c|}{\cellcolor{lightgray}case 2}\\ 
			\hline 
		   
		    \multicolumn{1}{|c||}{\cellcolor{lightgray}\diagbox[height=1.1\line,width=0.11\dimexpr\linewidth]{\vspace{-0.6mm}$i$}{\\[-5mm] $p$}}
		    & \cellcolor{lightgray}2.25 & \cellcolor{lightgray}2.5  & \cellcolor{lightgray}2.75  &  \cellcolor{lightgray}3.0 & \cellcolor{lightgray}3.25  & \multicolumn{1}{c||}{\cellcolor{lightgray}3.5} &  \multicolumn{1}{c|}{\cellcolor{lightgray}2.25}   & \cellcolor{lightgray}2.5  & \cellcolor{lightgray}2.75  & \cellcolor{lightgray}3.0  & \cellcolor{lightgray}3.25 &   \cellcolor{lightgray}3.5 \\ \hline\hline
			\multicolumn{1}{|c||}{\cellcolor{lightgray}$1$}                		& 0.895 & 0.824 & 0.776 & 0.739 & 0.711 & \multicolumn{1}{c||}{0.688} & \multicolumn{1}{c|}{1.533} & 1.186 & 0.983 & 0.893 & 0.857 & 0.852 \\ \hline
			\multicolumn{1}{|c||}{\cellcolor{lightgray}$2$}                  	& 0.898 & 0.830 & 0.782 & 0.746 & 0.718 & \multicolumn{1}{c||}{0.696} & \multicolumn{1}{c|}{1.487} & 1.285 & 1.134 & 1.045 & 1.002 & 0.970 \\ \hline
			\multicolumn{1}{|c||}{\cellcolor{lightgray}$3$}                     & 0.900 & 0.832 & 0.784 & 0.749 & 0.721 & \multicolumn{1}{c||}{0.699} & \multicolumn{1}{c|}{1.245} & 1.161 & 1.137 & 1.085 & 1.048 & 1.028 \\ \hline
			\multicolumn{1}{|c||}{\cellcolor{lightgray}$4$}               		& 0.900 & 0.833 & 0.785 & 0.750 & 0.722 & \multicolumn{1}{c||}{0.700} & \multicolumn{1}{c|}{1.075} & 1.056 & 1.061 & 1.064 & 1.043 & 1.030 \\ \hline
			\multicolumn{1}{|c||}{\cellcolor{lightgray}$5$}               		& 0.900 & 0.833 & 0.786 & 0.750 & 0.722 & \multicolumn{1}{c||}{0.700} & \multicolumn{1}{c|}{1.011} & 1.017 & 1.021 & 1.028 & 1.031 & 1.024 \\ \hline\hline
			\multicolumn{1}{|c||}{\cellcolor{lightgray}\small expected}         & 0.900 & 0.833 & 0.786 & 0.750 & 0.722 & \multicolumn{1}{c||}{0.700} & \multicolumn{1}{c|}{1.000} & 1.000 & 1.000 & 1.000 & 1.000 & 1.000 \\ \hline
	\end{tabular}\vspace{-2mm}
	\caption{Experimental order of convergence: $\texttt{EOC}_i(e_{\jump{},i})$,~${i=1,\dots,5}$.}
	\label{tab2}
\end{table}\vspace{-8mm}

 \begin{table}[H]
     \setlength\tabcolsep{1.9pt}
 	\centering
 	\begin{tabular}{c |c|c|c|c|c|c|c|c|c|c|c|c|} \cmidrule(){1-13}
 	\multicolumn{1}{|c||}{\cellcolor{lightgray}$\gamma$}	
    & \multicolumn{6}{c||}{\cellcolor{lightgray}case 1}   & \multicolumn{6}{c|}{\cellcolor{lightgray}case 2}\\ 
 	\hline 
		   
		    \multicolumn{1}{|c||}{\cellcolor{lightgray}\diagbox[height=1.1\line,width=0.11\dimexpr\linewidth]{\vspace{-0.6mm}$i$}{\\[-5mm] $p$}}
		    & \cellcolor{lightgray}2.25 & \cellcolor{lightgray}2.5  & \cellcolor{lightgray}2.75  &  \cellcolor{lightgray}3.0 & \cellcolor{lightgray}3.25  & \multicolumn{1}{c||}{\cellcolor{lightgray}3.5} &  \multicolumn{1}{c|}{\cellcolor{lightgray}2.25}   & \cellcolor{lightgray}2.5  & \cellcolor{lightgray}2.75  & \cellcolor{lightgray}3.0  & \cellcolor{lightgray}3.25 &   \cellcolor{lightgray}3.5 \\ \hline\hline
			\multicolumn{1}{|c||}{\cellcolor{lightgray}$1$}                		& 0.888 & 0.819 & 0.771 & 0.734 & 0.706 & \multicolumn{1}{c||}{0.684} & \multicolumn{1}{c|}{1.360} & 1.109 & 1.000 & 0.940 & 0.898 & 0.870 \\ \hline
			\multicolumn{1}{|c||}{\cellcolor{lightgray}$2$}                  	& 0.898 & 0.830 & 0.781 & 0.744 & 0.716 & \multicolumn{1}{c||}{0.693} & \multicolumn{1}{c|}{1.116} & 1.066 & 1.032 & 1.001 & 0.983 & 0.968 \\ \hline
			\multicolumn{1}{|c||}{\cellcolor{lightgray}$3$}                     & 0.899 & 0.832 & 0.784 & 0.748 & 0.720 & \multicolumn{1}{c||}{0.697} & \multicolumn{1}{c|}{1.010} & 1.022 & 1.019 & 1.010 & 0.996 & 0.987 \\ \hline
			\multicolumn{1}{|c||}{\cellcolor{lightgray}$4$}               		& 0.900 & 0.833 & 0.785 & 0.749 & 0.721 & \multicolumn{1}{c||}{0.699} & \multicolumn{1}{c|}{0.982} & 1.006 & 1.008 & 1.009 & 1.003 & 0.994 \\ \hline
			\multicolumn{1}{|c||}{\cellcolor{lightgray}$5$}               		& 0.900 & 0.833 & 0.786 & 0.750 & 0.722 & \multicolumn{1}{c||}{0.700} & \multicolumn{1}{c|}{0.977} & 1.002 & 1.004 & 1.006 & 1.005 & 1.000 \\ \hline\hline
			\multicolumn{1}{|c||}{\cellcolor{lightgray}\small expected}         & 0.900 & 0.833 & 0.786 & 0.750 & 0.722 & \multicolumn{1}{c||}{0.700} & \multicolumn{1}{c|}{1.000} & 1.000 & 1.000 & 1.000 & 1.000 & 1.000 \\ \hline
	\end{tabular}\vspace{-2mm}
 	\caption{Experimental order of convergence: $\texttt{EOC}_i(e_{\bfS,i})$,~${i=1,\dots,5}$.}
 	\label{tab3}
 \end{table}\vspace{-1cm}

\def\cprime{$'$} \def\cprime{$'$} \def\cprime{$'$}

\end{document}